\renewcommand{\vec}{\mathbf}
\newcommand{\ud}{\,\mathrm{d}}
\newtheorem{theorem}{Theorem}[section]
\newtheorem{lemma}[theorem]{Lemma}
\newtheorem{proposition}[theorem]{Proposition}
\newtheorem{corollary}[theorem]{Corollary}
\theoremstyle{definition}
\newtheorem{definition}[theorem]{Definition}
\newtheorem{assum}[part]{Assumption}
\theoremstyle{remark}
\newtheorem{remark}[theorem]{Remark}
\numberwithin{equation}{section}
\def\XXint#1#2#3{{
\setbox0=\hbox{$#1{#2#3}{\int}$}
\vcenter{\hbox{$#2#3$}}\kern-.5\wd0}}
\DeclareMathOperator{\spn}{span}
\begin{document}
\title{Analysis of multivariate Gegenbauer approximation in the hypercube}
\author{Haiyong Wang\footnotemark[1]~\footnotemark[2]~ and Lun Zhang\footnotemark[3]}
\maketitle
\renewcommand{\thefootnote}{\fnsymbol{footnote}}
\footnotetext[1]{School of Mathematics and Statistics, Huazhong
University of Science and Technology, Wuhan 430074, P. R. China.
E-mail: \texttt{haiyongwang@hust.edu.cn}}

\footnotetext[2]{Hubei Key Laboratory of Engineering Modeling and
Scientific Computing, Huazhong University of Science and Technology,
Wuhan 430074, P. R. China.}

\footnotetext[3]{School of Mathematical Sciences, Fudan University,
Shanghai 200433, P. R. China. E-mail: \texttt{lunzhang@fudan.edu.cn}}

\begin{abstract}
In this paper, we are concerned with multivariate Gegenbauer
approximation of functions defined in the $d$-dimensional hypercube. Two new and sharper bounds for the coefficients of
multivariate Gegenbauer expansion of analytic functions are presented based on two
different extensions of the Bernstein ellipse. We then establish an
explicit error bound for the multivariate Gegenbauer approximation
associated with an $\ell^q$ ball index set in the uniform norm. 
We also consider the multivariate approximation of functions with finite regularity and derive the associated error bound on the full grid in the uniform norm.
As an application, we extend our arguments to obtain some new tight bounds for the coefficients of tensorized Legendre expansions in the context of polynomial
approximation of parameterized PDEs.

\end{abstract}

{\bf Keywords:} hypercube, polyellipse, multivariate Gegenbauer
approximation, $\ell^q$ ball index set, error bound

\vspace{0.05in}

{\bf AMS classifications:} 41A10, 41A63, 41A25

\section{Introduction}\label{sec:introduction}
Let $f(\vec{x})$ be a function defined in the $d$-dimensional
hypercube
\begin{equation}\label{def:hypercube}
\Omega_d:=[-1,1]^d, \qquad d\geq 1.
\end{equation}
An efficient and accurate approximation of $f(\vec{x})$ is to expand
it in terms of tensor products of orthogonal polynomials. Besides
many well-known applications of such kind of expansions for the
univariate case (i.e., $d=1$), they have also been widely used in a
variety of practical problems encountered in higher dimensions. For
example, just to name a few, the tensorized Legendre expansion is an
important tool to approximate the solutions of a large class of
parametrized elliptic PDEs with stochastic coefficients
\cite{beck2014convergence,cohen2011analytic,tran2017analysis}. The
bivariate Chebyshev expansion plays an important role in the fast
solution method developed for Fredholm integral equation of the
second kind \cite{reichel1989fast} and the rapid evaluation of the
Bessel functions of real orders and arguments
\cite{bremer2018bessel}, while the bivariate Jacobi expansions have
been used to analyze the convergence of the $h$-$p$ version of the
finite element solution on quasi-uniform meshes
\cite{guo2007optimal}.


When using polynomial approximations, a fundamental issue is to
estimate their convergence rates or establish some error bounds,
which leads to intensive investigations in the literature. For the
univariate case, the Chebyshev expansion was first considered in
\cite{bernstein1912} (see also
\cite{elliott1964chebyshev,liu2017new,trefethen2013atap} for further
studies), and has been considerably extended to other polynomial
expansions since then (cf.
\cite{wang2016gegenbauer,wang2018legendre,wang2012convergence,xiang2012error,zhao2013sharp}
and the references therein). The multivariate case (i.e., $d\geq2$),
however, remains a research topic of great current interest, and
some important progresses have been achieved over the past decades.
Unlike the univariate case, a proper multi-index set has to be fixed
for the multivariate polynomial approximation. Some
popular choices include the hyperbolic cross index set and those
induced from the $1$- and $\infty$- norms of the multi-index. 
An error estimate of the tensorized Legendre expansion on the full
grid (i.e., the index set induced from the $\infty$-norm of the
multi-index) can be found in \cite{canuto2006spectral}, evaluated in
the Sobolev space. Shen and Wang in \cite{shen2010sparse} analyzed
the Jacobi approximations on the full grid and hyperbolic cross
Jacobi approximations in the context of anisotropic Jacobi weighted
Korobov spaces. More recently, based on a new observation, Trefethen
introduced the Euclidean degree for the multivariate polynomial in
\cite{trefethen2017cubature}, and further obtained the convergence
rate of the tensorized Chebyshev expansions for analytic functions with the multi-index sets
induced from $1$-, $2$-, and $\infty$- norms of the multi-index in
\cite{trefethen2017multivatiate}; see also the work of Bos and
Levenberg \cite{bos2018bernstein} for the studies from the viewpoint
of Bernstein-Walsh theory.


In this paper, we first establish some new and explicit bounds for
the coefficients of multivariate Gegenbauer expansion of analytic functions. This can also
be viewed as an extension of the results in
\cite{wang2016gegenbauer} for the univariate case to the
multivariate setting. We then apply these explicit bounds to
derive an explicit error bound for the multivariate Gegenbauer
approximation associated with an $\ell^q$ ball index set, which
particularly include the approximations with the index sets induced
from $1$-, $2$-, and $\infty$- norms of the multi-index as special
cases. For isotropic functions which are rotationally invariant, we
observe numerically that the error estimates obtained agree well
with the empirical rates. We next give a brief discussion on the multivariate approximation of functions with finite regularity and obtain the associated error bound on the full grid in the uniform norm. Finally, as an application, we show that
our arguments can be extended to obtain some new tight bounds on the coefficients
of tensorized Legendre expansions arising from polynomial approximation for a family
of parameterized PDEs.



The rest of this paper is organized as follows. In Section
\ref{sec:property}, we collect some basic properties of Gegenbauer
polynomials and give an explicit bound for the weighted Cauchy
transform of the Gegebauer polynomials for later use. In Sections
\ref{sec:MultiExpan} and \ref{sec:MultiExpRate}, we focus on the multivariate Gegenbauer approximation of analytic functions.
More precisely, two explicit bounds for the
coefficients of multivariate Gegenbauer expansion based on two
different assumptions on $f(\vec{x})$ are derived in Section \ref{sec:MultiExpan}, which allows us to establish an explicit error bound for the
multivariate Gegenbauer approximation associated with an $\ell^q$ ball index set in Section \ref{sec:MultiExpRate}, where the theoretical results are also illustrated
in numerical experiments. In Section \ref{sec:diff}, we consider the multivariate Gegenbauer approximation of a class of functions with finite regularity, and obtain the bound for the coefficients of the expansion as well as the error bound for the approximation on the full grid in the uniform norm.
In Section \ref{sec:appinPDE}, we discuss an application of our results to polynomial approximation of
parameterized PDEs. We finish the paper with some concluding remarks in Section \ref{sec:conclusion}.

\section{Preliminaries}\label{sec:property}

It is the aim of this section to make some preparations for our
later analysis. We first give a brief review of the basic properties
of Gegenbauer polynomials $C_n^{(\lambda)}(x)$, and then present an
explicit optimal upper bound of weighted Cauchy transform of
$C_n^{(\lambda)}(x)$ on the Bernstein ellipse.


\subsection{Gegenbauer polynomials}
\label{sec:gegenbauer}


The Gegenbauer polynomials $C_n^{(\lambda)}(x)$ are polynomials of
degree $n$ orthogonal over the interval $\Omega_1=[-1,1]$ with
respect to the weight function
$\omega_{\lambda}(x) = (1-x^2)^{\lambda-\frac{1}{2}}, \lambda>-\frac{1}{2}$.
More precisely, we have
\begin{equation}\label{eq:orthogonality}
\int_{\Omega_1}  C_{m}^{(\lambda)}(x) C_{n}^{(\lambda)}(x)
\omega_{\lambda}(x) \mathrm{d}x = h_n^{(\lambda)}  \delta_{m,n},
\end{equation}
where $\delta_{m,n}$ is the Kronnecker delta and
\begin{align}\label{eq:normalization constant}
h_n^{(\lambda)} = \frac{2^{1-2\lambda} \pi}{ \Gamma(\lambda)^2 }
\frac{\Gamma(n+2\lambda)}{\Gamma(n+1) (n+\lambda)}, \qquad
\lambda\neq 0,
\end{align}
with $\Gamma(z)$ being the usual gamma functions (cf. \cite[Chapter 5]{DLMF}).
The Gegenbauer polynomials are fixed by requiring
\begin{equation}\label{eq:normalization gegenbauer}
C_{n}^{(\lambda)}(1) =
\frac{\Gamma(n+2\lambda)}{n!\Gamma(2\lambda)}, \qquad
\lambda>-\frac{1}{2},\qquad \lambda\neq 0,
\end{equation}
If $\lambda = 0$, we have $C_0^{(0)}(x) = 1$ and $C_n^{(0)}(1) =
2/n$ for $n \geq 1$.
Furthermore, Gegenbauer polynomials satisfy the following inequality (cf. \cite[Equation 18.14.4]{DLMF})
\begin{align}\label{eq:GegenBound}
\left|C_{n}^{(\lambda)}(x)\right| \leq C_{n}^{(\lambda)}(1), \quad |x|\leq1, \quad
\lambda>0, \quad n\geq 0.
\end{align}

\subsection{An explicit optimal upper bound of weighted Cauchy transform of $C_n^{(\lambda)}(x)$ on the Bernstein ellipse}

For $z\in \mathbb{C}\setminus \Omega_1$, we define
\begin{align}\label{def:Qn}
\mathcal{Q}_{n}^{(\lambda)}(z) := \left\{
                                    \begin{array}{ll}
                                      {\displaystyle\frac{1}{2h_n^{(\lambda)}} \int_{\Omega_1}
\frac{\omega_{\lambda}(x) C_{n}^{(\lambda)}(x)}{z-x} \mathrm{d}x}, &
\hbox{$\lambda \neq 0$,}
\\
[15pt]
                                     {\displaystyle\lim_{\lambda\rightarrow0}
\mathcal{Q}_{n}^{(\lambda)}(z)} , & \hbox{$\lambda=0,~n=0$,}
\\
[15pt]                                    {\displaystyle\frac{2}{n}
\lim_{\lambda\rightarrow0} \lambda \mathcal{Q}_{n}^{(\lambda)}(z)}
, & \hbox{$\lambda=0,~n\geq 1$,}
                                    \end{array}
                                  \right.
\end{align}
where $h_n^{(\lambda)}$ is given in \eqref{eq:normalization constant}. When $\lambda=0$, it is easily seen from \eqref{eq:chebyandGegen} that
\begin{align}\label{eq:QnT}
\mathcal{Q}_n^{(0)}(z) = \frac{1}{2 h_{n}^{(0)}} \int_{\Omega_1}
\frac{\omega_0(x) T_n(x)}{z-x} \mathrm{d}x,
\end{align}
where $h_0^{(0)} = \pi$ and $h_n^{(0)}=\pi/2$ for $n\geq1$. Thus, up to some constant term,
$\mathcal{Q}_{n}^{(\lambda)}(z)$ is the weighted Cauchy transform of
$C_n^{(\lambda)}(x)$ (for $\lambda\neq 0$) or $T_n(x)$ (for $\lambda=0$), which is analytic in the whole complex plane with
a cut along $\Omega_1$.

We need an explicit upper bound of $\mathcal{Q}_{n}^{(\lambda)}$ for $z$ belonging to the so-called Bernstein ellipse, which is crucial in our subsequent analysis.
\begin{definition}
The Bernstein ellipse $\mathcal{E}_{\rho}$ is
defined by
\begin{equation}\label{def:BernsteinEllip}
\mathcal{E}_{\rho} =  \left\{ z \in \mathbb{C} ~\bigg|~ z =
\frac{1}{2} \big( u + u^{-1} \big),~~ |u| = \rho >1 \right\},
\end{equation}
which has the foci at $\pm 1$ with the major and minor semi-axes given by
$\frac{1}{2}(\rho+\rho^{-1})$ and $\frac{1}{2}(\rho-\rho^{-1})$,
respectively.
\end{definition}

By combining Corollary 3.4 and Theorems 4.1 and 4.3 in \cite{wang2016gegenbauer}, we then have the following
explicit optimal upper bound of $\mathcal{Q}_{n}^{(\lambda)}(z)$ over the Bernstein ellipse $\mathcal{E}_{\rho}$.
\begin{proposition}\label{prop:upperbound}
For $z\in \mathcal{E}_{\rho}$ and $\lambda>0$, we have
\begin{equation}\label{eq:upperbound}
\left| \mathcal{Q}_n^{(\lambda)}(z) \right| \leq\left\{
                                                  \begin{array}{ll}
                                                    \overline{D}_{\rho}^{(\lambda)}, & \hbox{$n=0$,}
\\[2ex]
D_{\rho}^{(\lambda)} \frac{n^{1-\lambda}}{\rho^{n}}, &
\hbox{$n \geq 1$,}
                                                  \end{array}
                                                \right.
\end{equation}
where the $n$-independent constants $\overline{D}_{\rho}^{(\lambda)}$ and $D_{\rho}^{(\lambda)}$ are defined by
\begin{equation}\label{eq:upperboundQ}
\overline{D}_{\rho}^{(\lambda)} = \frac{1}{\rho} \times \left\{
                                                        \begin{array}{ll}
                                                          (1+\rho^{-2})^{\lambda-1}, & \hbox{$\lambda\geq1$,} \\
[2ex]
                                                          (1-\rho^{-2})^{\lambda-1}, & \hbox{$0<\lambda<1$,}
                                                        \end{array}
                                                      \right.
\end{equation}
and
\begin{align}\label{eq:D}
D_{\rho}^{(\lambda)} = \frac{\Gamma(\lambda)}{\rho} \times
\left\{\begin{array}{ll}
                                           \exp\left(\frac{1}{12} \right) (1+\rho^{-2})^{\lambda-1}  , & \mbox{ $\lambda\geq1$}, \\
[2ex]
                                           \exp\left(\frac{1}{12} + \frac{1-\lambda}{2\lambda}\right) (1-\rho^{-2})^{\lambda-1}  , & \mbox{ $0<\lambda<1$}.
                                        \end{array}
                                        \right.
\end{align}
The bound in \eqref{eq:upperbound}, apart from a constant factor, is
optimal as $n\rightarrow\infty$ in the sense that it can not be
improved in any lower power of $n$ further.
\end{proposition}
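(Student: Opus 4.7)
The plan is to reduce the bound on $\mathcal{Q}_n^{(\lambda)}(z)$ over the Bernstein ellipse to an explicit estimate based on a contour/series representation. First, I would derive a convenient representation of $\mathcal{Q}_n^{(\lambda)}(z)$. Starting from the Gegenbauer generating function $(1-2xt+t^2)^{-\lambda}=\sum_{n\ge 0}C_n^{(\lambda)}(x)t^n$, multiplying by $\omega_\lambda(x)/(z-x)$ and integrating, then expanding $1/(z-x)$ and using orthogonality \eqref{eq:orthogonality}, gives a closed-form identity expressing $\mathcal{Q}_n^{(\lambda)}(z)$ as a product of an $n$-independent prefactor with a hypergeometric factor in the variable $u^{-2}$, where $u$ parametrizes $z$ on $\mathcal{E}_\rho$ via $z=\tfrac12(u+u^{-1})$, $|u|=\rho$. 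This representation makes the $u^{-n-2\lambda}$ (hence $\rho^{-n}$) decay manifest.

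Next I would obtain the constant $\overline{D}_\rho^{(\lambda)}$ for the base case $n=0$ directly from the integral definition \eqref{def:Qn}: bounding $|z-x|^{-1}$ by its minimum on $\mathcal{E}_\rho$, namely $(\rho-\rho^{-1})/2$ at $x=0$ fails to give the exact constant, so instead I would use the explicit representation for $\mathcal{Q}_0^{(\lambda)}$ as a complete elliptic-type integral and simplify, observing that the maximum on the ellipse occurs on the real axis and yields $(1\pm\rho^{-2})^{\lambda-1}/\rho$, with the sign controlled by whether $\lambda\ge 1$ (concavity) or $0<\lambda<1$ (convexity) of the factor $(1-\xi)^{\lambda-1}$ in $\xi=\rho^{-2}$.

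For $n\ge 1$, the hypergeometric factor is bounded uniformly in $n$ by an expression of the same $(1\pm\rho^{-2})^{\lambda-1}/\rho$ shape, again with case split on $\lambda$, while the $n$-dependent prefactor $\Gamma(n+2\lambda)/(\Gamma(n+\lambda+1)\Gamma(\lambda))$ or similar is analyzed via Stirling's formula with an explicit remainder $|\log\Gamma(z)-(z-\tfrac12)\log z+z-\tfrac12\log 2\pi|\le 1/(12\,\mathrm{Re}\,z)$, yielding the $\exp(1/12)$ term; a further application of $\Gamma(\lambda+1)=\lambda\Gamma(\lambda)$ together with the bound $(1+1/(2\lambda))^{\text{something}}\le\exp((1-\lambda)/(2\lambda))$ in the regime $0<\lambda<1$ isolates the extra $\exp((1-\lambda)/(2\lambda))$. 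The polynomial factor $n^{1-\lambda}$ emerges from $\Gamma(n+2\lambda)/\Gamma(n+\lambda+1)\sim n^{\lambda-1}$ via Stirling after the prefactor rearrangement, so the overall bound takes the stated form $D_\rho^{(\lambda)} n^{1-\lambda}/\rho^n$.

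The main obstacle is the sharp tracking of constants: one must (i) identify the point on $\mathcal{E}_\rho$ where $|\mathcal{Q}_n^{(\lambda)}|$ is maximized (a monotonicity/convexity argument in the parameter $\theta=\arg u$ that depends delicately on the sign of $\lambda-1$), and (ii) choose the Stirling-type inequalities tightly enough that the residual factor is exactly $\exp(1/12)$ rather than a larger generic constant, and for $0<\lambda<1$ absorb the small-$\lambda$ blowup of $\Gamma(\lambda)$ into the additional $\exp((1-\lambda)/(2\lambda))$ term. The optimality claim would then be verified by tracing the asymptotic behavior $\mathcal{Q}_n^{(\lambda)}(z)\sim \mathrm{const}\cdot n^{1-\lambda}u^{-n}$ on $\mathcal{E}_\rho$ and noting that no lower power of $n$ can absorb the hypergeometric prefactor as $n\to\infty$.
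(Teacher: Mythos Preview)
The paper does not actually prove this proposition: it is quoted verbatim from \cite{wang2016gegenbauer}, with the sentence ``By combining Corollary 3.4 and Theorems 4.1 and 4.3 in \cite{wang2016gegenbauer}'' serving as the entire justification. So there is no in-paper argument to compare against; the relevant comparison is with the cited source.

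Your plan is in fact a faithful outline of what that source does: Corollary~3.4 there gives the closed-form representation of $\mathcal{Q}_n^{(\lambda)}$ on $\mathcal{E}_\rho$ as a Gamma-ratio prefactor times a ${}_2F_1$ in $u^{-2}$, Theorem~4.1 bounds the hypergeometric factor by $(1\pm\rho^{-2})^{\lambda-1}/\rho$ via a monotonicity argument in $\theta=\arg u$ (with the case split on $\lambda\gtrless 1$ exactly as you describe), and Theorem~4.3 converts the Gamma-ratio prefactor into $n^{1-\lambda}$ times the explicit exponential constants using the Stirling inequality with remainder $1/(12n)$. So strategically you are on target.

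One concrete slip to fix before executing the plan: the Gamma ratio you wrote, $\Gamma(n+2\lambda)/\Gamma(n+\lambda+1)$, behaves like $n^{\lambda-1}$, not $n^{1-\lambda}$, and ``prefactor rearrangement'' does not reverse the sign of the exponent. The ratio that actually emerges from the representation (after the $1/(2h_n^{(\lambda)})$ normalization is absorbed) is $\Gamma(\lambda)\,\Gamma(n+1)(n+\lambda)/\Gamma(n+\lambda+1)=\Gamma(\lambda)\,\Gamma(n+1)/\Gamma(n+\lambda)$, and it is \emph{this} quotient whose Stirling estimate produces $n^{1-\lambda}$ together with the $\exp(1/12)$ and, for $0<\lambda<1$, the additional $\exp\big((1-\lambda)/(2\lambda)\big)$ coming from bounding $(1+\lambda/n)^{\,n+1/2}$ from below. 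Apart from that bookkeeping correction, your sketch matches the cited proof and would go through.
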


\begin{remark}
If $\lambda=0$, i.e., for the Chebyshev polynomials of the first kind, we have the following explicit formula for
$\mathcal{Q}_n^{(0)}(z)$:
\begin{align}\label{eq:CauchyChebT}
\mathcal{Q}_n^{(0)}(z) = \left\{\begin{array}{ll}
                                          {\displaystyle \frac{1}{\sqrt{z^2-1} (z\pm\sqrt{z^2-1})^{n}}  } , & \mbox{$n\geq1$}, \\
[2ex]
                                          {\displaystyle \frac{1}{2\sqrt{z^2-1}}  }, & \mbox{$n=0$.}
                                        \end{array}
                                        \right.
\end{align}
If $\lambda=1$, i.e., for the Chebyshev polynomials of the second
kind,
we have
\begin{align}\label{eq:CauchyChebU}
\mathcal{Q}^{(1)}_n(z) = \frac{1}{( z \pm \sqrt{z^2 - 1} )^{n+1}},
\quad n\geq0.
\end{align}
This particularly implies that
\begin{equation}\label{eq:boundQU}
\left| \mathcal{Q}^{(1)}_n(z) \right| \leq \frac{1}{\rho^{n+1}}, \qquad z\in \mathcal{E}_{\rho},
\end{equation}
i.e., the prefactor $D_\rho^{(1)}$ in \eqref{eq:upperbound} can be improved to be $1/\rho$.
\end{remark}

\section{Multivariate Gegenbauer expansion of analytic functions}\label{sec:MultiExpan}

In this section, we intend to estimate the coefficients of the multivariate Gegenbauer expansion of analytic functions based on two different assumptions on the analyticity.

\subsection{Notations}
We first introduce some notations to be used throughout the rest of this paper.

\begin{itemize}
\item
We shall denote by $\vec{x}$ and $\vec{z}$
the point in $\mathbb{R}^d$ and $\mathbb{C}^d$, respectively, i.e.,
\begin{align}
\mathbf{x} = (x_1,\ldots,x_d), \qquad  \mathbf{z} =
(z_1,\ldots,z_d).
\end{align}

\item The notation $\mathbb{N}_0^d$ stands for the set of all $d$-tuples
$\vec{k}=(k_1,k_2,\ldots,k_d)$, where
$k_i\in\mathbb{N}_0=\{0,1,2,\ldots\}.$ Such a $d$-tuple is called a
multi-index. For any two multi-indices $\vec{k}=(k_1,\ldots,k_d)$
and $\vec{t}=(t_1,\ldots,t_d)$, we define the following
componentwise operation
$
\vec{k}+\vec{t}=(k_1+t_1,\ldots,k_d+t_d),
$
and use the convention
$
\vec{k}\leq \vec{t} \Leftrightarrow k_j \leq t_j, j=1,2,\ldots d
$.

\item
Let $\mathbf{1}=(1,\ldots,1)\in \mathbb{N}_0^d$. For a scalar $t\in \mathbb{R}$, we define
$
\mathbf{k} + t = \mathbf{k} + t\cdot \mathbf{1}=(k_1+t,\ldots,k_d+t)
$
and
$
\vec{k}^t = \prod_{j=1}^{d} k_j^t
$.

\item
If $\phi$, $\phi_{k_j}$, $j=1,\ldots,d$, are functions of one
variable, we define
\begin{align}
\phi(\mathbf{x}) = \prod_{j=1}^{d} \phi(x_j), \qquad
\phi_{\mathbf{k}}(\mathbf{x}) = \prod_{j=1}^{d} \phi_{k_j}(x_j).
\end{align}
Thus, $\vec{x}^{\vec{k}}=\prod_{j=1}^d x_j^{k_j}$ is a multivariate
monomial.

\item We define
\begin{align}\label{eq:Norm} \|\vec{k}\|_{q} :=
\left\{\begin{array}{ll}
               {\displaystyle (k_1^q+\cdots+k_d^q)^{\frac{1}{q}} },   & \mbox{$0<q<\infty$, }\\ [2ex]
               {\displaystyle \max_{1\leq i\leq d} k_i },    & \mbox{$q=\infty$}.
                  \end{array}
                  \right.
\end{align}

\item Given a multi-index $\vec{k}=(k_1,\ldots,k_d)$ and a multivariate function $f(\vec{x})$, we denote the $||\vec{k}||_1$th mixed partial derivative by
$|\vec{k}|=k_1+\cdots+k_d$ and
\begin{equation}\label{def:derivative}
\partial^{\vec{k}} f = \frac{\partial^{||\vec{k}||_1} f}{\partial x_1^{k_1}\cdots \partial
x_d^{k_d}}
=\partial x_1^{k_1}\cdots \partial
x_d^{k_d}f.
\end{equation}

\item For any two multi-indices $\vec{m}=(m_1,\ldots,m_d)$ and $\vec{n}=(n_1,\ldots,n_d)$, we set
\begin{equation}\label{def:minindices}
\min\{\vec{m},\vec{n}\}:=(\min\{m_1,n_1\},\ldots,\min\{m_d,n_d\}).
\end{equation}
\end{itemize}


\subsection{Multivariate Gegenbauer expansion}

Let $f(\mathbf{x})$ be an analytic function defined in the hypercube
$\Omega_d$. The multivariate Gegenbauer series expansion of $f(\mathbf{x})$ is defined by
\begin{align}\label{eq:gegenbauerexpansion}
f(\mathbf{x}) = \sum_{\mathbf{k}\in \mathbb{N}_0^d} a_\vec{k}
C_{\vec{k}}^{(\lambda)}(\mathbf{x}),
\end{align}
where $C_{\vec{k}}^{(\lambda)}(\mathbf{x})=\prod_{i=1}^d C_{k_i}^{(\lambda)}(x_i)$ stands for the tensorized Gegenbauer polynomials, and by orthogonality \eqref{eq:orthogonality},
\begin{equation}\label{def:akintegral}
a_{\vec{k}} = \frac{1}{h_{\vec{k}}^{(\lambda)}} \int_{\Omega_d}
f(\mathbf{x}) C_{\mathbf{k}}^{(\lambda)}(\mathbf{x})
\omega_{\lambda}(\mathbf{x}) \mathrm{d}\mathbf{x}
\end{equation}
with $\mathrm{d}\vec{x}=\prod_{i=1}^d \mathrm{d}x_i$ and
$h_{\vec{k}}^{(\lambda)}=\prod_{i=1}^d h_{k_i}^{(\lambda)}$. We refer to \cite{Mason} and references therein for the convergence issue of multivariate Gegenbauer series expansions.

We are interested in the estimate of the expansion coefficients
$a_{\vec{k}}$. The case of a single variable, i.e., $d=1$, is well
established; cf. \cite{wang2016gegenbauer} and references therein.
To deal with the higher dimensional case $d>1$, an essential issue
here is to extend the Bernstein ellipse to a region in
$\mathbb{C}^d$. In what follows, we divide our discussions on the
estimate of $a_{\vec{k}}$ into two cases, based on different
extensions of the Bernstein ellipse.

%
%
%

\subsection{Estimates of $a_{\vec{k}}$ under Assumption \ref{assump1} on $f$}
\label{sec:assumpt1}

A natural extension of the Bernstein ellipse $\mathcal{E}_{\rho}$ to $\mathbb{C}^d$ is the polyellipse, and we then make the following assumption on $f$.

\begin{assum}\label{assump1}
The function $f$ is analytic inside the polyellipse
\begin{equation}
\mathcal{E}_{\pmb{\rho}} : = \bigotimes_{j=1}^{d}
\mathcal{E}_{\rho_j},
\end{equation}
where $\mathcal{E}_{\rho}$ is defined in \eqref{def:BernsteinEllip},
and $\pmb{\rho}=(\rho_1,\ldots,\rho_d)$ with $\rho_i>1$, $i=1,\ldots, d$.
\end{assum}

The main result of this section is the following theorem.

\begin{theorem}\label{thm:akbound1}
Under Assumption \ref{assump1} and for $\lambda>0$, the multivariate
Gegenbauer coefficients of $f(\mathbf{x})$ satisfy
\begin{align}\label{eq:akupbound1}
\left|a_{\vec{k}}\right| & \leq
\frac{\mathcal{B}_{f}L(\mathcal{E}_{\pmb{\rho}})}{\pi^d
\pmb{\rho}^{\vec{k}}} \prod_{\substack{1\leq i \leq d \\ k_i =
0}}\overline{D}_{\rho_i}^{(\lambda)} \prod_{\substack{1\leq j \leq d
\\ k_j \neq
0}} k_j^{1-\lambda} D_{\rho_j}^{(\lambda)} ,
\end{align}
where
\begin{align}\label{eq:MaxValue}
\mathcal{B}_{f} = \max_{\mathbf{z}\in
\mathcal{E}_{\pmb{\rho}}}|f(\mathbf{z})|,
\end{align}
$L(\mathcal{E}_{\pmb{\rho}}):=\prod_{i=1}^d L(\mathcal{E}_{\rho_i})$
with $L(\mathcal{E}_{\rho_i})$ being the length of the circumference
of the Bernstein ellipse $\mathcal{E}_{\rho_i}$, and the constants
$\overline{D}_{\rho_i}^{(\lambda)}$, $D_{\rho_j}^{(\lambda)}$ are
defined in \eqref{eq:upperboundQ} and \eqref{eq:D}, respectively. In
addition, apart from some constant factor, the bound
\eqref{eq:akupbound1} is optimal as $k_j \to  +\infty$ for
$j=1,\ldots,d$.
\end{theorem}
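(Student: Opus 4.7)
The plan is to recover the bound by a $d$-fold contour deformation in the integral representation \eqref{def:akintegral} and then invoke Proposition~\ref{prop:upperbound} in each coordinate. Concretely, fixing $\vec{x}\in\Omega_d$, I would use Cauchy's formula in each variable separately (justified by Assumption~\ref{assump1}, which gives analyticity on the closed polydisk bounded by the polyellipse) to write
\begin{equation*}
f(\vec{x})=\frac{1}{(2\pi i)^d}\oint_{\partial\mathcal{E}_{\rho_d}}\!\!\cdots\!\oint_{\partial\mathcal{E}_{\rho_1}}\frac{f(\vec{z})}{\prod_{j=1}^{d}(z_j-x_j)}\,\mathrm{d}z_1\cdots\mathrm{d}z_d .
\end{equation*}
Substituting this into \eqref{def:akintegral}, applying Fubini to interchange the real integrals over $\Omega_d$ with the contour integrals, and recognising the inner one-dimensional integrals from \eqref{def:Qn} would yield
\begin{equation*}
a_{\vec{k}}=\frac{1}{(\pi i)^d}\oint_{\partial\mathcal{E}_{\rho_d}}\!\!\cdots\!\oint_{\partial\mathcal{E}_{\rho_1}} f(\vec{z})\,\prod_{j=1}^{d}\mathcal{Q}_{k_j}^{(\lambda)}(z_j)\,\mathrm{d}z_1\cdots\mathrm{d}z_d .
\end{equation*}

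From here the bound \eqref{eq:akupbound1} is a direct estimate: I would pull out $\mathcal{B}_f=\max_{\vec z\in\mathcal{E}_{\pmb{\rho}}}|f(\vec z)|$, bound the total arc length of the product contour by $L(\mathcal{E}_{\pmb{\rho}})=\prod_j L(\mathcal{E}_{\rho_j})$, and on each factor apply Proposition~\ref{prop:upperbound} with the appropriate branch: for indices $j$ with $k_j=0$ use $|\mathcal{Q}_0^{(\lambda)}(z_j)|\le\overline{D}_{\rho_j}^{(\lambda)}$, while for $k_j\ge 1$ use $|\mathcal{Q}_{k_j}^{(\lambda)}(z_j)|\le D_{\rho_j}^{(\lambda)}k_j^{1-\lambda}\rho_j^{-k_j}$. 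Since $\rho_j^{-k_j}=1$ whenever $k_j=0$, collecting factors gives $\pmb{\rho}^{\vec{k}}=\prod_{j:k_j\neq 0}\rho_j^{k_j}$ in the denominator and the two products over the zero/nonzero index sets on top, exactly as in \eqref{eq:akupbound1}.

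For the optimality claim, I would exploit the tensor-product structure of the setup together with the univariate optimality already recorded in Proposition~\ref{prop:upperbound}. Namely, pick a function of the form $f(\vec{x})=\prod_{j=1}^{d} g_j(x_j)$ where each $g_j$ is chosen, as in \cite{wang2016gegenbauer}, so that its univariate Gegenbauer coefficient $a_{k_j}^{(j)}$ saturates the single-variable bound $D_{\rho_j}^{(\lambda)}k_j^{1-\lambda}\rho_j^{-k_j}$ up to a constant independent of $k_j$. Then $a_{\vec{k}}=\prod_j a_{k_j}^{(j)}$ and the corresponding product matches \eqref{eq:akupbound1} up to a multiplicative constant, which shows the $k_j$-dependence cannot be sharpened as any $k_j\to\infty$.

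The main obstacle I anticipate is not the estimate itself but the justification of the contour-and-Fubini manoeuvre: one has to verify that the iterated deformation from $[-1,1]^d$ onto $\partial\mathcal{E}_{\pmb{\rho}}$ is legal under Assumption~\ref{assump1}, and that the swap of the real integration $\mathrm{d}\vec{x}$ against the iterated contour integration is absolutely convergent so that the weighted Cauchy transforms $\mathcal{Q}_{k_j}^{(\lambda)}(z_j)$ emerge cleanly as a product. Once that bookkeeping is in place, the rest reduces to invoking Proposition~\ref{prop:upperbound} coordinate by coordinate.
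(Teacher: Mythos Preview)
Your proposal is correct and follows essentially the same route as the paper: represent $f$ by the (iterated) Cauchy integral over the polyellipse, substitute into \eqref{def:akintegral}, interchange integrals to produce the product $\prod_j\mathcal{Q}_{k_j}^{(\lambda)}(z_j)$, and then estimate coordinate-wise via Proposition~\ref{prop:upperbound}. Your optimality argument via tensor-product test functions is in fact more explicit than the paper's, which simply appeals to the optimality statement in Proposition~\ref{prop:upperbound}.
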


\begin{proof}
Since $f(\mathbf{z})$ is analytic inside the Bernstein polyellipse
$\mathcal{E}_{\pmb{\rho}}$, thus, analytic in $\Omega_d$. By
Cauchy's integral formula for the analytic function of several
variables (cf. \cite[Page 32]{bochner1948complex}), we have
\begin{align}\label{eq:Cauchyformula}
f(\mathbf{x}) = \left( \frac{1}{2\pi i} \right)^d
\oint_{\mathcal{E}_{\pmb{\rho}}}
\frac{f(\mathbf{z})}{\mathbf{z}-\mathbf{x}} \mathrm{d}\mathbf{z},
\end{align}
where $\mathbf{z}-\mathbf{x} = \prod_{j=1}^{d} (z_j - x_j)$. Inserting \eqref{eq:Cauchyformula} into \eqref{def:akintegral}, it then follows from interchanging the order of
integration that
\begin{align}\label{eq:coutour}
a_{\vec{k}}
= \left( \frac{1}{\pi i} \right)^d \oint_{\mathcal{E}_{\pmb{\rho}}}
f(\mathbf{z}) \mathcal{Q}_{\mathbf{k}}^{(\lambda)}(\mathbf{z})
\mathrm{d}\mathbf{z},
\end{align}
where recall that
$\mathcal{Q}_{\mathbf{k}}^{(\lambda)}(\mathbf{z})=\prod_{i=1}^d
\mathcal{Q}_{k_i}^{(\lambda)}(z_i)$ with
$\mathcal{Q}_{k_i}^{(\lambda)}(z)$ defined in \eqref{def:Qn}.

As a consequence, it is readily seen that
\begin{align}\label{eq:absak}
|a_{\vec{k}}| &\leq
\frac{\mathcal{B}_{f}L(\mathcal{E}_{\pmb{\rho}})}{\pi^d}
\max_{\mathbf{z}\in \mathcal{E}_{\pmb{\rho}}} \left|
Q_{\mathbf{k}}^{(\lambda)}(\mathbf{z})\right| =
\frac{\mathcal{B}_{f}L(\mathcal{E}_{\pmb{\rho}})}{\pi^d}
\prod_{i=1}^d \max_{z_i \in \mathcal{E}_{\rho_i}} \left|
Q_{k_i}^{(\lambda)}(z_i)\right|.
\end{align}
The upper bound of $a_{\vec{k}}$ in \eqref{eq:akupbound1} and its
optimality follows directly by combining \eqref{eq:absak} with
Proposition \ref{prop:upperbound}. This completes the proof of
Theorem \ref{thm:akbound1}.
\end{proof}

As mentioned at the end of Section \ref{sec:gegenbauer}, the classical Chebyshev polynomials and Legendre polynomials are special cases of Gegenbauer polynomials. Since these classical polynomials play important roles in practice, we next state the relevant results for these polynomials.

\begin{corollary}\label{coro:MulChebT}
Suppose that the multivariate function $f$ satisfies Assumption
\ref{assump1} and consider the following tensorized Chebyshev
expansion of the first kind:
\begin{align}\label{eq:MulChebT}
f(\vec{x}) = \sum_{\mathbf{k}\in \mathbb{N}_0^d} a_{\mathbf{k}}^{T}
T_{\vec{k}}(\vec{x}), \quad a_{\vec{k}}^{T} =
\frac{2^{d-\aleph(\mathbf{k})}}{\pi^d}  \int_{\Omega_d} f(\vec{x})
 T_{\vec{k}}(\vec{x}) \omega_{0}(\mathbf{x}) \mathrm{d}\mathbf{x},
\end{align}
with $\aleph(\mathbf{k}):=\#\{i:k_i=0\}$. Then, we have
\begin{align}\label{eq:chebybound}
\left|a_{\mathbf{k}}^{T}\right| \leq 2^{d-\aleph(\mathbf{k})} \frac{
\mathcal{B}_{f}}{\pmb{\rho}^{\vec{k}}}.
\end{align}
\end{corollary}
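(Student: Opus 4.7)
The plan is to specialize the contour-integral argument of Theorem \ref{thm:akbound1} to the Chebyshev case $\lambda=0$, where the weighted Cauchy transform $\mathcal{Q}_n^{(0)}(z)$ is available in the closed form \eqref{eq:CauchyChebT}. This additional structure collapses the prefactors that had to be estimated in general in Theorem \ref{thm:akbound1} and produces the much cleaner bound \eqref{eq:chebybound}.

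Concretely, I would substitute Cauchy's integral formula \eqref{eq:Cauchyformula} for $f(\vec{x})$ into the coefficient integral in \eqref{eq:MulChebT}, interchange the order of integration, and factorize the resulting inner integral as a product over $i$ of $\int_{-1}^{1} T_{k_i}(x)\omega_0(x)/(z_i-x)\,dx$. By \eqref{eq:QnT} together with $h_0^{(0)}=\pi$, $h_n^{(0)}=\pi/2$ for $n\geq 1$, and the explicit formula \eqref{eq:CauchyChebT}, each univariate factor equals $\pi/[\sqrt{z_i^2-1}\,(z_i+\sqrt{z_i^2-1})^{k_i}]$, and crucially this expression is valid uniformly in $k_i\geq 0$. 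Parameterizing each Bernstein ellipse by the Joukowski map $z_i=(u_i+u_i^{-1})/2$ with $u_i=\rho_i e^{i\theta_i}$, a short calculation yields
$$\frac{dz_i}{\sqrt{z_i^2-1}\,(z_i+\sqrt{z_i^2-1})^{k_i}}=\frac{du_i}{u_i^{k_i+1}}.$$
After the $\pi^d$ arising from the $d$ inner integrals cancels the $1/\pi^d$ in the coefficient normalization \eqref{eq:MulChebT}, the full expression collapses to
$$a_{\vec{k}}^T=\frac{2^{d-\aleph(\vec{k})}}{(2\pi i)^d}\oint_{|u_1|=\rho_1}\cdots\oint_{|u_d|=\rho_d}\frac{f(\vec{z}(\vec{u}))}{\prod_{i=1}^{d}u_i^{k_i+1}}\,d\vec{u}.$$
Bounding $|f(\vec{z})|\leq\mathcal{B}_f$ uniformly on $\mathcal{E}_{\pmb{\rho}}$ and computing $\oint_{|u_i|=\rho_i}|du_i|/|u_i|^{k_i+1}=2\pi/\rho_i^{k_i}$ for each $i$ produces exactly the claimed bound \eqref{eq:chebybound}.

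The only real obstacle is the bookkeeping: one must track how the $\aleph(\vec{k})$-dependent normalization $2^{d-\aleph(\vec{k})}/\pi^d$ from the coefficient formula, the factor $2h_{k_i}^{(0)}$ implicit in \eqref{eq:QnT} (which takes different values for $k_i=0$ and $k_i\geq 1$), and the $(2\pi i)^d$ from Cauchy's formula combine. The pleasant feature of the Chebyshev-first-kind case is that the closed-form integrand in Step~2 is the same for $k_i=0$ and $k_i\geq 1$, so no $\aleph(\vec{k})$-dependence is injected by the inner integrals, and the only $\aleph(\vec{k})$ factor surviving to the final bound is the one inherited directly from the initial coefficient normalization in \eqref{eq:MulChebT}.
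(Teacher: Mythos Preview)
Your proposal is correct and is essentially the same argument as the paper's: both substitute Cauchy's integral formula into \eqref{eq:MulChebT}, swap integrals, invoke the explicit Chebyshev Cauchy transform \eqref{eq:CauchyChebT}, pass to the Joukowski variables $u_j$, and estimate the resulting polycircle integral to obtain \eqref{eq:chebybound}. Your observation that the inner integral $\int_{-1}^1 T_{k_i}(x)\omega_0(x)/(z_i-x)\,dx=\pi/[\sqrt{z_i^2-1}(z_i+\sqrt{z_i^2-1})^{k_i}]$ holds uniformly for $k_i\ge 0$ is a slightly tidier bookkeeping device than the paper's explicit $k_j=0$ versus $k_j\ge 1$ split, but the two routes yield the identical contour representation $a_{\vec{k}}^T=2^{-\aleph(\vec{k})}(\pi i)^{-d}\oint_{\mathcal{C}_{\pmb{\rho}}}f(\vec{z}(\vec{u}))\,\vec{u}^{-(\vec{k}+1)}\,d\vec{u}$.
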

\begin{proof}
The proof is similar to that of Theorem \ref{thm:akbound1}. By
Cauchy's integral formula and \eqref{eq:QnT}, it is easily seen that
\begin{align}\label{eq:chebycontour}
a_{\mathbf{k}}^{T} = \left( \frac{1}{\pi i} \right)^d
\oint_{\mathcal{E}_{\pmb{\rho}}} f(\mathbf{z})
\mathcal{Q}_{\mathbf{k}}^{(0)}(\mathbf{z}) \mathrm{d}\mathbf{z}.
\end{align}
We now make change of variables $z_j = (u_j + u_j^{-1} )/2$ with $u_j \in \mathcal{C}_{\rho_j}:=\{z\in\mathbb{C} ~|~ |z|=\rho_j\}$
for each $j=1,\ldots,d$ in \eqref{eq:CauchyChebT}. A simple calculation shows that
\begin{align}
\mathcal{Q}_{k_j}^{(0)}(z_j) = \left\{\begin{array}{ll}
                                          {\displaystyle  \frac{2}{u_j^{k_j}(u_j - u_j^{-1})}  } , & \mbox{$k_j\geq1$}, \\
[15pt]
                                          {\displaystyle \frac{1}{u_j-u_j^{-1}}  }, & \mbox{$k_j=0$.}
                                        \end{array}
                                        \right.
\end{align}
Consequently,
\begin{align}\label{eq:chebycontour2}
a_{\mathbf{k}}^{T}
&= \frac{1}{2^{\aleph(\mathbf{k})}} \left( \frac{1}{\pi i} \right)^d
\oint_{\mathcal{C}_{\pmb{\rho}}} f(\mathbf{z}(\mathbf{u}))
\prod_{\substack{1\leq i \leq d \\ k_i = 0}} \frac{1}{u_i}
\prod_{\substack{1\leq j \leq d
\\ k_j \neq 0}} \frac{1}{u_j^{k_j+1}} \mathrm{d}\mathbf{u}
\nonumber \\
&= \frac{1}{2^{\aleph(\mathbf{k})}}  \left( \frac{1}{\pi i}
\right)^d \oint_{\mathcal{C}_{\pmb{\rho}}}
\frac{f(\mathbf{z}(\mathbf{u}))}{\mathbf{u}^{\mathbf{k}+1 }}
\mathrm{d}\mathbf{u},
\end{align}
where $\mathcal{C}_{\pmb{\rho}} :=  \bigotimes_{j=1}^{d}
\mathcal{C}_{\rho_j}$ is the polycircle.

The desired result \eqref{eq:chebybound} follows directly from the above formula.
\end{proof}

\begin{remark}
If $d=1$,  the bound \eqref{eq:chebybound} reduces to
\[
|a_k^{T}| \leq \left\{\begin{array}{ll}
                              {\displaystyle \mathcal{B}_{f} } , & \mbox{$k=0$},
                              \\[2ex]
                              {\displaystyle \frac{2\mathcal{B}_{f}}{\rho^k} }, & \mbox{$k\geq1$.}
                                        \end{array}
                                        \right.
\]
Thus, we have recovered the sharpest bound which was first obtained
by Bernstein in \cite{bernstein1912}. For $d\geq 2$, the bound
\eqref{eq:chebybound} can also be found in
\cite[Page 95]{bochner1948complex}, up to the explicit prefactor.
\end{remark}


On account of \eqref{eq:boundQU} and \eqref{eq:absak}, the following corollary concerning
tensorized Chebyshev expansion of the second kind is immediate.
\begin{corollary}\label{coro:MulChebU}
Suppose that the multivariate function $f$ satisfies Assumption
\ref{assump1} and consider the following tensorized Chebyshev
expansion of the second kind
\begin{align}
f(\vec{x}) = \sum_{\mathbf{k}\in \mathbb{N}_0^d} a_{\mathbf{k}}^{U}
U_{\vec{k}}(\vec{x}), \quad a_{\mathbf{k}}^{U} =
\frac{1}{h_{\vec{k}}^{(1)}} \int_{\Omega_d} f(\mathbf{x})
U_{\mathbf{k}}(\mathbf{x}) \omega_{1}(\mathbf{x})
\mathrm{d}\mathbf{x}.
\end{align}
Then, we have
\begin{align}
\left|a_{\mathbf{k}}^{U}\right| \leq
\frac{\mathcal{B}_{f}L(\mathcal{E}_{\pmb{\rho}})}{\pi^d\pmb{\rho}^{\vec{k}+1}}.
\end{align}
\end{corollary}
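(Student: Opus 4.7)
The plan is to mirror the proof of Theorem \ref{thm:akbound1} verbatim, specializing to the case $\lambda=1$, and then replace the general bound from Proposition \ref{prop:upperbound} with the sharper estimate \eqref{eq:boundQU} that is available for Chebyshev polynomials of the second kind. No new ingredient is needed; everything has already been set up in Section \ref{sec:assumpt1}.

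First, I would observe that since $U_{k}(x) = C_k^{(1)}(x)$, the tensorized Chebyshev expansion of the second kind coincides with the multivariate Gegenbauer expansion of $f(\vec{x})$ for $\lambda=1$, so $a_{\vec{k}}^U = a_{\vec{k}}\big|_{\lambda=1}$. Under Assumption \ref{assump1}, Cauchy's integral formula \eqref{eq:Cauchyformula} applies, and interchanging the order of integration exactly as in \eqref{eq:coutour} produces the contour representation
\begin{equation*}
a_{\vec{k}}^U = \left(\frac{1}{\pi i}\right)^d \oint_{\mathcal{E}_{\pmb{\rho}}} f(\vec{z}) \, \mathcal{Q}_{\vec{k}}^{(1)}(\vec{z}) \, \mathrm{d}\vec{z}.
\end{equation*}

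Taking absolute values and estimating the integrand by its maximum on $\mathcal{E}_{\pmb{\rho}}$, as already done in \eqref{eq:absak}, gives
\begin{equation*}
\bigl|a_{\vec{k}}^U\bigr| \leq \frac{\mathcal{B}_f L(\mathcal{E}_{\pmb{\rho}})}{\pi^d} \prod_{i=1}^d \max_{z_i \in \mathcal{E}_{\rho_i}} \bigl|\mathcal{Q}_{k_i}^{(1)}(z_i)\bigr|.
\end{equation*}
At this point, instead of invoking the general bound of Proposition \ref{prop:upperbound}, I would apply the improved estimate \eqref{eq:boundQU}, which yields $|\mathcal{Q}_{k_i}^{(1)}(z_i)| \leq \rho_i^{-(k_i+1)}$ for every $i$. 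Multiplying these bounds together produces $\prod_{i=1}^d \rho_i^{-(k_i+1)} = \pmb{\rho}^{-(\vec{k}+\mathbf{1})}$, which delivers the claimed inequality.

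There is essentially no obstacle here: the corollary is a direct substitution. The only minor care needed is in bookkeeping --- verifying that $a_{\vec{k}}^U$ defined via $h_{\vec{k}}^{(1)}$ and $\omega_1$ is precisely the $\lambda=1$ specialization of the general coefficient \eqref{def:akintegral}, so that the contour step \eqref{eq:coutour} applies without modification. Once that identification is made, the bound follows in one line from \eqref{eq:boundQU} and \eqref{eq:absak}.
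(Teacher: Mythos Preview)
Your proposal is correct and matches the paper's approach exactly: the paper states that the corollary is ``immediate'' from \eqref{eq:boundQU} and \eqref{eq:absak}, which is precisely the contour estimate plus the sharper $\lambda=1$ bound you describe. There is nothing to add.
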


Finally, the tensorized Legendre expansion is defined by
\begin{align}\label{def:tensorLeg}
f(\vec{x}) = \sum_{\mathbf{k}\in \mathbb{N}_0^d} a_{\mathbf{k}}^{L}
P_{\vec{k}}(\vec{x}), \quad  a_{\mathbf{k}}^{L} =
\frac{1}{h_{\vec{k}}^{(\frac{1}{2})}} \int_{\Omega_d}  f(\mathbf{x})
P_{\mathbf{k}}(\mathbf{x}) \mathrm{d}\mathbf{x}.
\end{align}
where $P_{\vec{k}}(\vec{x}) = \prod_{i=1}^{d} P_{k_i}(x_i)$, with
$P_k(x)$ defined as in \eqref{eq:gegenbauer and legendre}. Let
$\overline{P}_k(x)$ be the normalized Legendre polynomial of degree
$k$, i.e., $\overline{P}_k(x) = \sqrt{\frac{2k+1}{2}} P_k(x)$. The
normalized Legendre expansion is defined by
\begin{align}\label{eq:normLegExp}
f(\vec{x}) = \sum_{\mathbf{k}\in \mathbb{N}_0^d}
\overline{a}_{\mathbf{k}}^{L} \overline{P}_{\vec{k}}(\vec{x}), \quad
\overline{a}_{\mathbf{k}}^{L} = \int_{\Omega_d} f(\mathbf{x})
\overline{P}_{\mathbf{k}}(\mathbf{x}) \mathrm{d}\mathbf{x},
\end{align}
where $\overline{P}_{\vec{k}}(\vec{x}) = \prod_{i=1}^{d}
\overline{P}_{k_i}(x_i)$. Both kinds of Legendre expansion are
frequently used in practice. By setting $\lambda=1/2$ in
\eqref{eq:akupbound1} and note that $\overline{a}_{\mathbf{k}}^{L} =
a_{\mathbf{k}}^{L}/\sqrt{\vec{k}+\frac{1}{2}}$, we finally obtain
the estimates of $a_{\mathbf{k}}^{L}$ and
$\overline{a}_{\mathbf{k}}^{L}$ in the following corollary.
\begin{corollary}
Under Assumption \ref{assump1}, we have 
\begin{align}\label{eq:LegCoeff}
\left|a_{\mathbf{k}}^{L} \right| &\leq \frac{
\mathcal{B}_{f}L(\mathcal{E}_{\pmb{\rho}})}{\pi^d
\pmb{\rho}^{\vec{k}}} \prod_{\substack{1\leq i \leq d
\\ k_i = 0}}\overline{D}_{\rho_i}^{(1/2)}
\prod_{\substack{1\leq j \leq d \\ k_j \neq 0}} \sqrt{k_j}
D_{\rho_j}^{(1/2)},
\end{align}
and
\begin{align}\label{eq:NormLegCoeff}
\left|\overline{a}_{\mathbf{k}}^{L} \right| &\leq
\frac{2^{\frac{\aleph(\mathbf{k})}{2}}
\mathcal{B}_{f}L(\mathcal{E}_{\pmb{\rho}})}{\pi^d
\pmb{\rho}^{\vec{k}}} \prod_{\substack{1\leq i \leq d
\\ k_i = 0}}\overline{D}_{\rho_i}^{(1/2)}
\prod_{\substack{1\leq j \leq d \\ k_j \neq 0}} D_{\rho_j}^{(1/2)},
\end{align}
where the constants $\overline{D}_{\rho_i}^{(\frac{1}{2})}$,
$D_{\rho_j}^{(\frac{1}{2})}$ are defined in \eqref{eq:upperboundQ}
and \eqref{eq:D}, respectively.
\end{corollary}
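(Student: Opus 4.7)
The plan is to obtain both estimates as immediate consequences of Theorem \ref{thm:akbound1} specialized to $\lambda = 1/2$. First I would invoke the identification $C_n^{(1/2)}(x) = P_n(x)$, which is consistent with the normalization \eqref{eq:normalization gegenbauer} (yielding $C_n^{(1/2)}(1)=1=P_n(1)$), so that the tensorized Legendre expansion \eqref{def:tensorLeg} coincides term-by-term with the multivariate Gegenbauer expansion \eqref{eq:gegenbauerexpansion} with $\lambda=1/2$, giving $a^L_{\vec{k}}=a_{\vec{k}}$.

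For the bound \eqref{eq:LegCoeff}, I would simply substitute $\lambda=1/2$ into \eqref{eq:akupbound1}. Since $1-\lambda=1/2$, each factor $k_j^{1-\lambda}$ collapses to $\sqrt{k_j}$, and the claimed inequality is precisely \eqref{eq:akupbound1} rewritten with the specific values of $\overline{D}_{\rho_i}^{(1/2)}$ and $D_{\rho_j}^{(1/2)}$; no other modification is needed.

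For the normalized bound \eqref{eq:NormLegCoeff}, I would start from the rescaling $\overline{P}_k(x) = \sqrt{k+1/2}\, P_k(x)$, which gives the componentwise relation $\overline{a}^L_{\vec{k}} = a^L_{\vec{k}}/\prod_{i=1}^d \sqrt{k_i+1/2}$. I would then split the product over $i=1,\ldots,d$ according to whether $k_i=0$ or $k_i\neq 0$. Each zero component contributes a factor $1/\sqrt{1/2}=\sqrt{2}$, which altogether produces the prefactor $2^{\aleph(\vec{k})/2}$. For each nonzero component, I would combine the $\sqrt{k_j}$ appearing in \eqref{eq:LegCoeff} with the $1/\sqrt{k_j+1/2}$ and use the trivial inequality $\sqrt{k_j}/\sqrt{k_j+1/2}\leq 1$ to absorb both factors. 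Substituting these simplifications into \eqref{eq:LegCoeff} then yields \eqref{eq:NormLegCoeff}.

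Because the argument is merely a specialization of the main theorem combined with a scaling identity, I do not anticipate any genuine obstacle; the only mild bookkeeping task is carefully tracking the $\aleph(\vec{k})/2$ exponent while separating the zero and nonzero components of the multi-index.
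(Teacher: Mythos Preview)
Your proposal is correct and follows essentially the same approach as the paper: specialize Theorem~\ref{thm:akbound1} to $\lambda=1/2$ for \eqref{eq:LegCoeff}, then use the relation $\overline{a}_{\mathbf{k}}^{L} = a_{\mathbf{k}}^{L}/\sqrt{\vec{k}+\tfrac{1}{2}}$ together with the split into zero and nonzero components to obtain \eqref{eq:NormLegCoeff}. The paper states this argument tersely in the paragraph preceding the corollary, and your write-up simply makes the bookkeeping (the $2^{\aleph(\vec{k})/2}$ factor and the inequality $\sqrt{k_j}/\sqrt{k_j+1/2}\le 1$) explicit.
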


\subsection{Numerical experiments and Assumption II on $f$}
Although we have derived an explicit bound for the coefficients of
multivariate Gegenbauer expansion under Assumption I on $f$, it is
unclear how to determine an optimal polyellipse such that the bound
matches the decay rate of the coefficients well. To introduce our
second assumption, we proceed to perform numerical experiments to
the multivariate normalized Legendre coefficients
$\overline{a}_{\vec{k}}^L$ for the following two bivariate functions
\begin{equation}\label{def:f1}
f_1(x_1,x_2) = \sqrt{x_1^2+x_2^2+\frac{1}{2}},
\end{equation}
and
\begin{equation}\label{def:f2}
f_2(x_1,x_2) = \frac{1}{x_1^2+x_2^2+1}.
\end{equation}
Note that both functions are isotropic and are analytic for all real
values of $x_1$ and $x_2$. Moreover, for complex values of $x_1$ and
$x_2$, the former function has a branch point at $x_1^2+x_2^2=-1/2$
and the latter function has a pole at $x_1^2+x_2^2=-1$. Contour
plots of $\left|\overline{a}_{\vec{k}}^L\right|$ are shown in Figure
\ref{fig:contourplot}. In both cases, we observe clearly that the
contours look like circular arcs in the positive orthant. This
phenomena was first reported in \cite{trefethen2017cubature} for the
multivariate Chebyshev coefficients of isotropic functions.


\begin{figure}[ht]
\centering
\includegraphics[width=7.2cm,height=6.8cm]{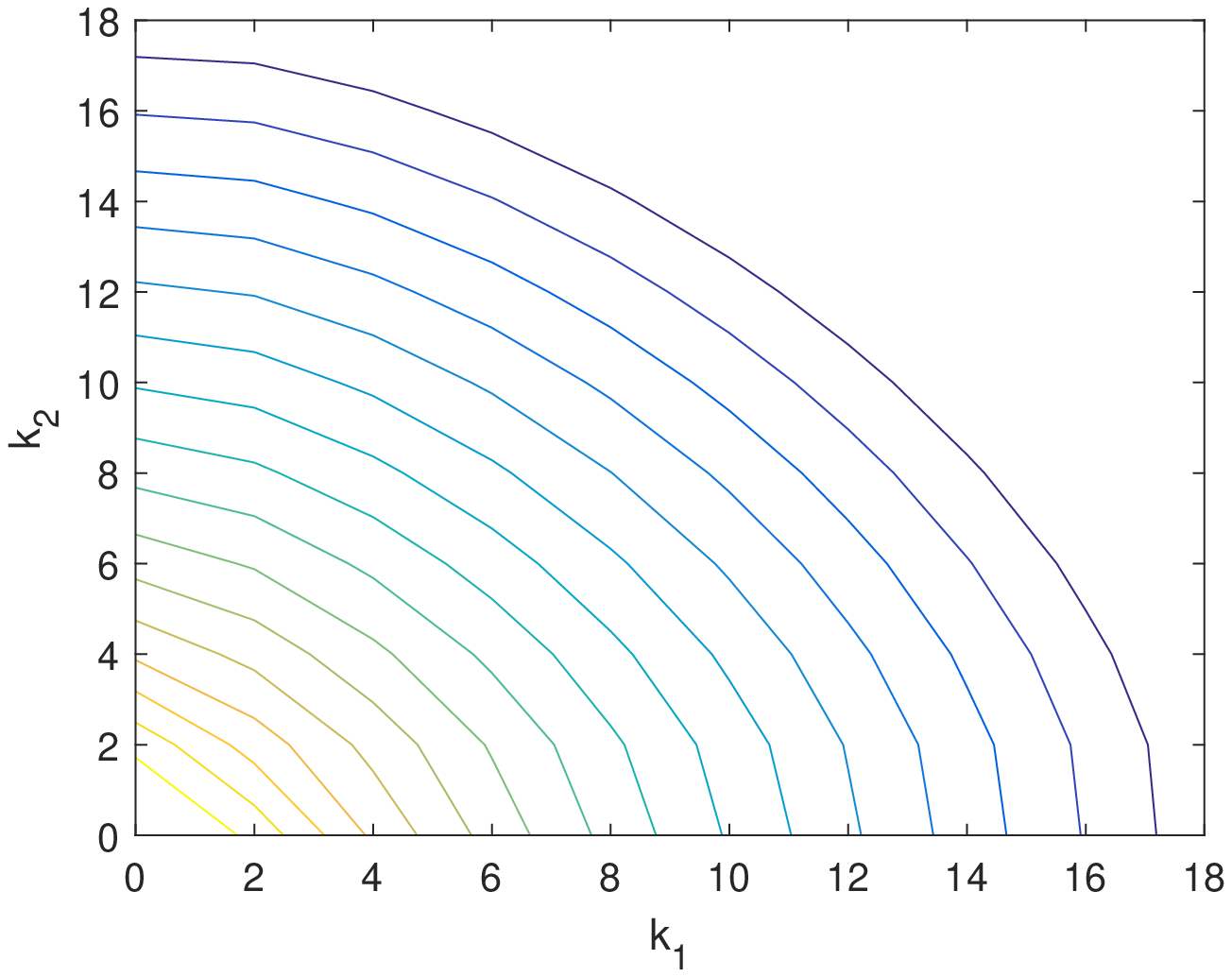}~~
\includegraphics[width=7.2cm,height=6.8cm]{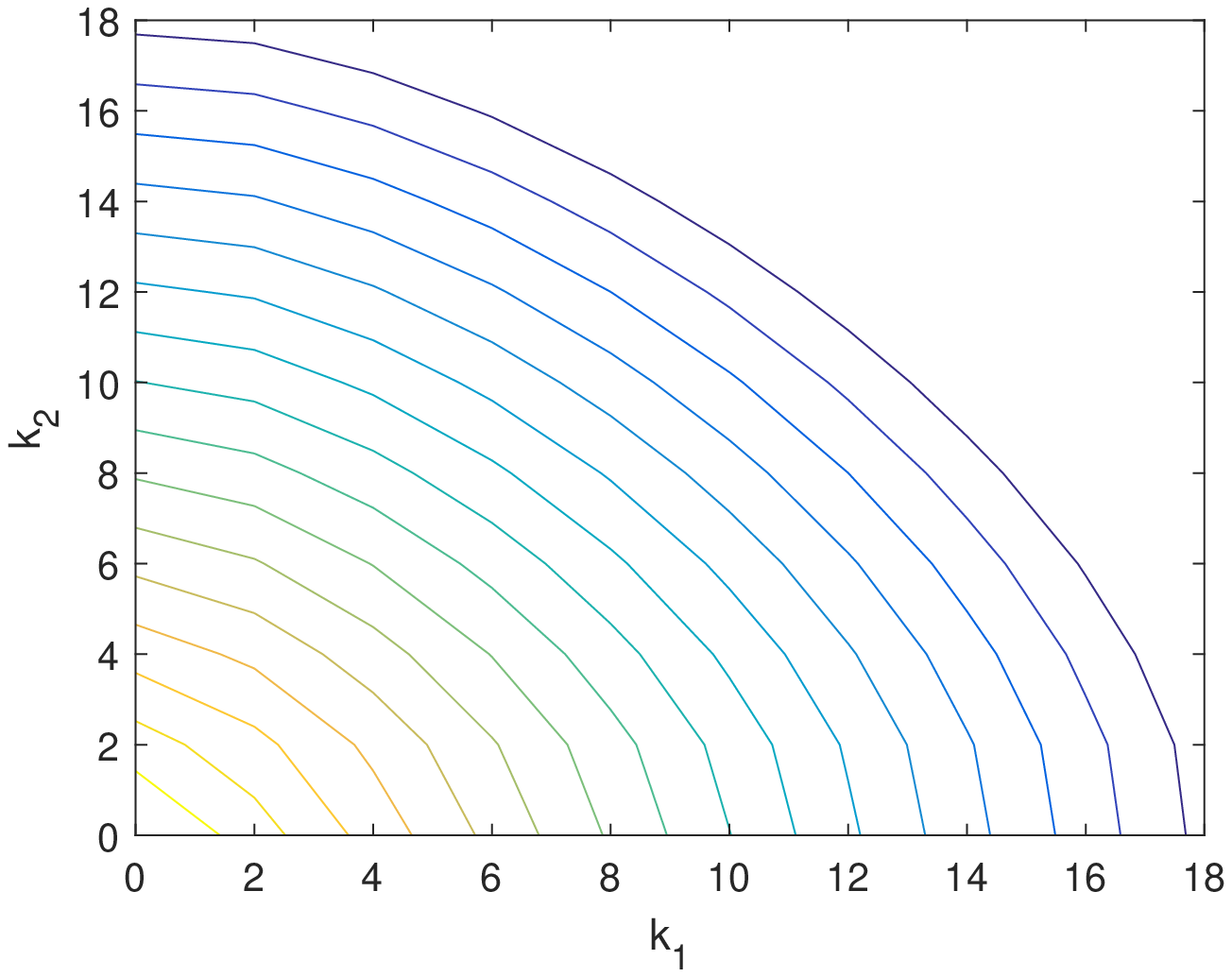}
\caption{Contour plots of
$\left|\overline{a}_{\mathbf{k}}^{L}\right|$ for the bivariate
functions \eqref{def:f1} (left) and \eqref{def:f2} (right). From
inside out, the contours represent
$10^{-1},10^{-2},\ldots,10^{-16}$. } \label{fig:contourplot}
\end{figure}

To approximate a multivariate function $f$ in $\Omega_d$ by a multivariate polynomial, it is usual to use the so-called total degree $d_{T}$ or maximal degree $d_{M}$ of the multivariate polynomial. More precisely, for a multivariate monomial $\vec{x}^{\vec{k}}$, we set
\begin{equation}\label{def:dTM}
d_{T}(\vec{x}^{\vec{k}}):=\|\vec{k}\|_1,\qquad
d_{M}(\vec{x}^{\vec{k}}):=\|\vec{k}\|_{\infty},
\end{equation}
and the degree of a multivariate polynomial is then defined as the
maximum of the degrees of its nonzero monomials. The above
observation, however, implies that any approximations based on these
traditional notions might be suboptimal. This invokes Trefethen in
\cite{trefethen2017cubature} to introduce the following
Euclidean degree for $\vec{x}^{\vec{k}}$:
\begin{equation}
d_{E}(\vec{x}^{\vec{k}}):=\|\vec{k}\|_{2}, 
\end{equation}
which also leads to the definition of Euclidean degree of a
multivariate polynomial. Note that the Euclidean degree might not be
an integer. The motivation behind this definition is the
multivariate polynomials with prescribed Euclidean degree may
provide approximations with uniform resolution in all directions for
functions defined in the hypercube $\Omega_d$, as evidenced in
Figure \ref{fig:contourplot}.


As an application of the Euclidean degree, it is used to establish
the rate of decay of the multivariate Chebyshev coefficients in
\cite{trefethen2017multivatiate} by imposing some conditions on
$f$. To some extent, this explains the aforementioned effect in a
mathematical way. In particular, the following region is introduced
therein to extend the Bernstein ellipse.

\begin{definition}\label{def:Nsa}
For any $s,a>0$, we denote by $N_{s,a}\subseteq\mathbb{C}$ the open
region bounded by the ellipse with foci $0$ and $s$, and leftmost
point $-a$.
\end{definition}

Note that
\begin{equation}\label{eq:squareellipse}
z\in \mathcal{E}_{\rho}~~ \Leftrightarrow~~ z^2 \in \partial
N_{1,h^2},
\end{equation}
where $\partial U$ denotes the boundary of a region $U$ and
\begin{align}\label{eq:rhoh}
h=\frac{\rho-\rho^{-1}}{2}, \qquad \rho=h+\sqrt{1+h^2}.
\end{align}
It is then required in \cite{trefethen2017multivatiate} that $f$ is analytic in the $d$-dimensional region defined by
$\sum_{i=1}^d x_i^2 \in N_{d,h^2}$ for some $h>0 $, which clearly extends the analyticity of $f$ in the Bernstein ellipse to a higher dimensional space.


To deal with the case of multivariate Gegenbauer expansion, we will adopt the following assumption,
which is a slight generalization of the one just mentioned.

\begin{assum}\label{assump2}
There exists some $h>0$ such that $f(\vec{z})$ is analytic in the
$d$-dimensional  region $D_{h,\epsilon}$ defined by
\begin{align}\label{eq:LargeRegion}
D_{h,\epsilon}:=\left\{\vec{z}\in\mathbb{C}^d  ~ \bigg |  ~ \sum_{i=1}^d z_i^2 \in N_{d,h^2+d\epsilon} \right \},
\end{align}
where the region $N_{d,h^2+d\epsilon}$ is defined in Definition
\ref{def:Nsa}, $\epsilon>0$ is an arbitrarily small fixed constant
when $0<\lambda<1$, and $\epsilon=0$ when $\lambda\geq1$ or
$\lambda=0$.
\end{assum}

As we shall see later, the region $D_{h,\epsilon}$ actually contains
some polyellipses. The reason why we need $\epsilon >0$ for
$0<\lambda <1$ will be explained in Remark \ref{rk:failure} below.
We next show the upper bound of multivariate Gegenbauer coefficients
under Assumption \ref{assump2}, which extends the results for the
Chebyshev case.

\subsection{Estimates of $a_{\vec{k}}$ under Assumption \ref{assump2}}\label{sec:estAssII}
The main result of this section is the following theorem.

\begin{theorem}\label{thm:akbound2}
Under Assumption \ref{assump2} and $\lambda>0$, the multivariate
Gegenbauer coefficients of $f(\mathbf{x})$ satisfy
\begin{align}\label{eq:akupbound2}
\left|a_{\vec{k}}\right| &\leq
\frac{\widehat{\mathcal{B}}_{f}L(\mathcal{E}_{\pmb{\widehat{\rho}}})}{\pi^d
\rho^{\|\vec{k}\|_2}} \prod_{\substack{1\leq i \leq d \\ k_i =
0}}\overline{D}_{\widehat{\rho}_i}^{(\lambda)}
\prod_{\substack{1\leq j \leq d
\\ k_j \neq
0}} k_j^{1-\lambda} D_{\widehat{\rho}_j}^{(\lambda)},
\end{align}
where $\rho=h+\sqrt{1+h^2}$,
\begin{align}\label{eq:rhoHat}
\widehat{\rho}_j = \sqrt{(c_jh)^2+\epsilon} +
\sqrt{1+(c_jh)^2+\epsilon}, \qquad j=1,\ldots,d,
\end{align}
with $c_j = k_j/\|\vec{k}\|_2$, and the constants $\overline{D}_{\widehat{\rho}_i}^{(\lambda)}$, $D_{\widehat{\rho}_j}^{(\lambda)}$ are defined in \eqref{eq:upperboundQ} and \eqref{eq:D}, respectively. Moreover, $\mathcal{E}_{\widehat{\pmb{\rho}}}  := \bigotimes_{j=1}^{d}
\mathcal{E}_{\widehat{\rho}_j}$ and the constant $\widehat{\mathcal{B}}_{f}$
is defined by
\begin{align}\label{eq:MaxVal2}
\widehat{\mathcal{B}}_{f} = \max_{\mathbf{z}\in
\mathcal{E}_{\vec{\widehat{\pmb{\rho}}}}}|f(\mathbf{z})|.
\end{align}
\end{theorem}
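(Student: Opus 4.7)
The plan is to deduce Theorem \ref{thm:akbound2} from Theorem \ref{thm:akbound1} by choosing, for each multi-index $\vec{k}$, a polyellipse $\mathcal{E}_{\widehat{\pmb{\rho}}}$ that (i) lies inside the analyticity region $D_{h,\epsilon}$ of Assumption \ref{assump2} and (ii) is shaped optimally for the direction of $\vec{k}$. The key observation is that the radii $\widehat{\rho}_j$ in \eqref{eq:rhoHat} correspond, via \eqref{eq:rhoh}, to semi-distances $\widehat{h}_j:=(\widehat{\rho}_j-\widehat{\rho}_j^{-1})/2=\sqrt{(c_jh)^2+\epsilon}$, so that $\widehat{h}_j^2 = c_j^2 h^2+\epsilon$ with $c_j=k_j/\|\vec{k}\|_2$. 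This exactly balances the budget $\sum_j \widehat{h}_j^2 = h^2+d\epsilon$ against the data of $D_{h,\epsilon}$.

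First I would verify that $\mathcal{E}_{\widehat{\pmb{\rho}}}\subseteq \overline{D_{h,\epsilon}}$, so that Cauchy's integral formula applies on the corresponding polycircle (a slight perturbation $h\mapsto h-\delta$ or an analogous limiting step handles strict containment). By \eqref{eq:squareellipse} and the choice of $\widehat{h}_j$, any $z_j\in\mathcal{E}_{\widehat{\rho}_j}$ satisfies $z_j^2\in\partial N_{1,\widehat{h}_j^2}$, i.e., $|z_j^2|+|z_j^2-1|=1+2\widehat{h}_j^2$. The triangle inequality then gives
\begin{align*}
\Big|\sum_{j=1}^{d}z_j^2\Big|+\Big|\sum_{j=1}^{d}z_j^2-d\Big|
\;\leq\;\sum_{j=1}^{d}\bigl(|z_j^2|+|z_j^2-1|\bigr)
\;=\;d+2\sum_{j=1}^{d}\widehat{h}_j^2
\;=\;d+2(h^2+d\epsilon),
\end{align*}
which, together with $\sum_j c_j^2=1$, is precisely the condition for $\sum_j z_j^2$ to lie in $\overline{N_{d,h^2+d\epsilon}}$. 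Thus the contour in Theorem \ref{thm:akbound1} may be taken to be $\mathcal{E}_{\widehat{\pmb{\rho}}}$, yielding the stated bound with $\widehat{\pmb{\rho}}^{\vec{k}}$ in the denominator.

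To replace $\widehat{\pmb{\rho}}^{\vec{k}}$ by the cleaner $\rho^{\|\vec{k}\|_2}$, I would prove $\prod_j \widehat{\rho}_j^{k_j}\geq \rho^{\|\vec{k}\|_2}$. Using $\log\widehat{\rho}_j=\sinh^{-1}(\widehat{h}_j)\geq \sinh^{-1}(c_jh)$ and $k_j=c_j\|\vec{k}\|_2$, this reduces to
\begin{align*}
\sum_{j=1}^{d} c_j\,\sinh^{-1}(c_jh)\;\geq\;\sinh^{-1}(h).
\end{align*}
Since $t\mapsto\sinh^{-1}(t)$ is concave on $[0,\infty)$ and vanishes at $0$, comparison with the chord through the origin yields $\sinh^{-1}(c_jh)\geq c_j\sinh^{-1}(h)$. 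Multiplying by $c_j$, summing over $j$, and using $\sum_j c_j^2=1$ closes the estimate.

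The main obstacle is engineering the polyellipse so that both requirements hold simultaneously: the $\widehat{\rho}_j$ must be large enough to push $\mathcal{E}_{\widehat{\pmb{\rho}}}$ into the curved region $D_{h,\epsilon}$ (which is governed by the single scalar constraint on $\sum \widehat{h}_j^2$), yet distributed so that the anisotropic product $\prod\widehat{\rho}_j^{k_j}$ sees the Euclidean norm $\|\vec{k}\|_2$. The choice $\widehat{h}_j^2=c_j^2h^2+\epsilon$ achieves this: the $c_j^2$ factor aligns the ellipse shape with $\vec{k}$, while the small shift $\epsilon>0$ (needed only for $0<\lambda<1$) keeps each $\widehat{\rho}_j$ uniformly bounded away from $1$ so that the constants $\overline{D}_{\widehat{\rho}_j}^{(\lambda)}$ and $D_{\widehat{\rho}_j}^{(\lambda)}$ in Proposition \ref{prop:upperbound} do not blow up when some $c_j=0$ (i.e., $k_j=0$).
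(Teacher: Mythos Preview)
Your argument is correct and follows the same route as the paper's proof: choose the $\vec{k}$-dependent polyellipse $\mathcal{E}_{\widehat{\pmb{\rho}}}$, show it lies in $D_{h,\epsilon}$, apply Theorem~\ref{thm:akbound1}, and then replace $\widehat{\pmb{\rho}}^{\vec{k}}$ by $\rho^{\|\vec{k}\|_2}$. The only difference is packaging: where the paper quotes \cite[Lemmas 5.2 and 5.3]{trefethen2017multivatiate} for the containment $\bigoplus_j N_{1,h_j^2+\epsilon}\subseteq N_{d,h^2+d\epsilon}$ and for the inequality $c_jh+\sqrt{1+(c_jh)^2}\geq\rho^{c_j}$, you supply direct self-contained proofs---the triangle inequality on the focal-distance description of the ellipses for the former, and the chord-below-graph property of the concave function $\sinh^{-1}$ for the latter.
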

\begin{proof}
We follow the idea in \cite{trefethen2017multivatiate}, which deals
with the multivariate Chebyshev coefficients. For each $\vec{k}\in
\mathrm{N}_0^d$, we define $h_j=c_jh$ with $c_j =
k_j/\|\vec{k}\|_2$, $j=1,\ldots,d$. It is then easily seen that
$h_1^2+\cdots+d_d^2=h^2$. From \cite[Lemma
5.2]{trefethen2017multivatiate}, we have
\begin{align}
N_{1,h_1^2+\epsilon}\oplus \cdots\oplus N_{1,h_d^2+\epsilon}
\subseteq N_{d,\sum_{i=1}^{d} h_i^2+d\epsilon} =
N_{d,h^2+d\epsilon}, \nonumber
\end{align}
where $\oplus$ denotes the Minkowski sum of sets. This, together
with Assumption \ref{assump2} on $f$, implies that $f$ is analytic
in the region defined by $\{\vec{z}\in\mathbb{C}^d~|~ z_j^2 \in
N_{1,h_j^2+\epsilon}, j=1,\ldots,d\}$. On account of
\eqref{eq:squareellipse} and \eqref{eq:rhoh}, we further conclude
that $f$ is analytic in the polyellipse
$\mathcal{E}_{\pmb{\widehat{\rho}}} = \bigotimes_{j=1}^{d}
\mathcal{E}_{{\widehat{\rho}}_j}$ where each $\widehat{\rho}_j$ is
defined in \eqref{eq:rhoHat}. Hence, by Theorem \ref{thm:akbound1},
it follows that
\begin{align}\label{eq:akupbound3}
\left|a_{\vec{k}}\right| & \leq
\frac{\widehat{\mathcal{B}}_{f}L(\mathcal{E}_{\pmb{\widehat{\rho}}})}{\pi^d
\pmb{\widehat{\rho}}^{\vec{k}}} \prod_{\substack{1\leq i \leq d \\
k_i = 0}}\overline{D}_{\widehat{\rho}_i}^{(\lambda)}
\prod_{\substack{1\leq j \leq d
\\ k_j \neq
0}} k_j^{1-\lambda} D_{\widehat{\rho}_j}^{(\lambda)}.
\end{align}
To this end, we see from \cite[Lemma 5.3]{trefethen2017multivatiate} that
\begin{align}\label{eq:LowBound1}
\widehat{\rho}_j \geq c_jh+\sqrt{1+(c_jh)^2} \geq
(h+\sqrt{1+h^2})^{c_j}=\rho^{\frac{k_j}{\|\vec{k}\|_2}},
\end{align}
which implies
\begin{align}\label{eq:LowBound2}
\pmb{\widehat{\rho}}^\vec{k}=\prod_{j=1}^d\widehat{\rho}_j^{k_j}\geq
\rho^{\frac{k_1^2+\cdots+k_d^2}{||\vec{k}||_2}}=\rho^{\|\vec{k}\|_2}.
\end{align}
Combining  \eqref{eq:LowBound2} and \eqref{eq:akupbound3} then gives
us the the bound of $|a_{\vec{k}}|$ given in \eqref{eq:akupbound2}.
This completes the proof of Theorem \ref{thm:akbound2}.
\end{proof}

\begin{remark}\label{rk:failure}
When $0<\lambda<1$, we note that the $\widehat{\rho}_j$-dependent constants
$D_{\widehat{\rho}_j}^{(\lambda)}$ and $\overline{D}_{\widehat{\rho}_j}^{(\lambda)}$ would be infinity as
$\widehat{\rho}_j \to 1$; see \eqref{eq:upperboundQ} and \eqref{eq:D}. By \eqref{eq:rhoHat}, this is indeed the case if $\epsilon=0$ and $c_j=0$ for some $j$. This explains why we have assumed that $\epsilon>0$ when $0<\lambda<1$, so that $\widehat{\rho}_j>1$ for all $j=1,\ldots,d$.
\end{remark}

Since the cases $\lambda=0$ and $\lambda=1$ are of particular interest, we conclude this section with the relevant results
in the following corollary.
\begin{corollary}\label{cor:multcheby}
Let $f$ be analytic in the $d$-dimensional region $D_{h,0}$ defined
in \eqref{eq:LargeRegion} for some $h>0$. Then, the multivariate
Chebyshev coefficients of the first kind for $f$ satisfy
\begin{align}\label{eq:chebybound2}
\left|a_{\mathbf{k}}^{T}\right| \leq 2^{d-\aleph(\mathbf{k})} \frac{
\widehat{\mathcal{B}}_{f}}{\rho^{\|\vec{k}\|_2}},
\end{align}
and the multivariate Chebyshev coefficients of the second kind for $f$
satisfy
\begin{align}\label{eq:chebybound3}
\left|a_{\mathbf{k}}^{U}\right| \leq \frac{\widehat{\mathcal{B}}_{f}
L(\mathcal{E}_{\pmb{\widehat{\rho}}})}{\pi^d
\rho^{\|\vec{k}\|_2+1}},
\end{align}
where $\widehat{\rho}_j = c_jh + \sqrt{1+(c_jh)^2}$ with
$c_j=k_j/\|\vec{k}\|_2$ for $j=1,\ldots,d$. 
\end{corollary}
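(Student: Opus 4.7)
The plan is to mirror the proof of Theorem \ref{thm:akbound2}, substituting the sharper Chebyshev coefficient bounds (Corollaries \ref{coro:MulChebT} and \ref{coro:MulChebU}) for the generic bound coming from Theorem \ref{thm:akbound1}. Since the cases $\lambda = 0$ and $\lambda = 1$ are both covered by the $\epsilon = 0$ version of Assumption \ref{assump2}, there is no need to deal with an auxiliary $\epsilon > 0$.

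For both assertions I would begin by fixing $\vec{k}$ and introducing $c_j = k_j/\|\vec{k}\|_2$ and $h_j = c_j h$, so that $\sum_{j=1}^d h_j^2 = h^2$. The Minkowski sum identity \cite[Lemma 5.2]{trefethen2017multivatiate}, combined with Assumption \ref{assump2}, then yields
\[
N_{1,h_1^2} \oplus \cdots \oplus N_{1,h_d^2} \subseteq N_{d,h^2},
\]
so that $f$ is analytic in the polyellipse $\mathcal{E}_{\pmb{\widehat{\rho}}}$ with $\widehat{\rho}_j = h_j + \sqrt{1+h_j^2}$, exactly as in the proof of Theorem \ref{thm:akbound2}. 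In other words, Assumption \ref{assump1} is now in force on $\mathcal{E}_{\pmb{\widehat{\rho}}}$, and Corollaries \ref{coro:MulChebT} and \ref{coro:MulChebU} respectively provide
\[
|a_{\vec{k}}^T| \leq \frac{2^{d-\aleph(\vec{k})}\widehat{\mathcal{B}}_f}{\pmb{\widehat{\rho}}^{\vec{k}}}, \qquad
|a_{\vec{k}}^U| \leq \frac{\widehat{\mathcal{B}}_f L(\mathcal{E}_{\pmb{\widehat{\rho}}})}{\pi^d \pmb{\widehat{\rho}}^{\vec{k}+1}}.
\]

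To close out \eqref{eq:chebybound2} I would invoke the lower bound $\pmb{\widehat{\rho}}^{\vec{k}} \geq \rho^{\|\vec{k}\|_2}$ already established in \eqref{eq:LowBound2}. The estimate \eqref{eq:chebybound3} requires the slightly stronger inequality $\pmb{\widehat{\rho}}^{\vec{k}+1} \geq \rho^{\|\vec{k}\|_2+1}$, which is the only genuinely new step. From \eqref{eq:LowBound1} we have $\widehat{\rho}_j \geq \rho^{c_j}$, whence
\[
\pmb{\widehat{\rho}}^{\vec{k}+1} = \prod_{j=1}^d \widehat{\rho}_j^{k_j+1} \geq \rho^{\sum_{j=1}^d c_j(k_j+1)} = \rho^{\|\vec{k}\|_2 + \|\vec{k}\|_1/\|\vec{k}\|_2},
\]
and the elementary norm inequality $\|\vec{k}\|_1 \geq \|\vec{k}\|_2$ (so that $\sum_j c_j \geq 1$) together with $\rho > 1$ finishes the job.

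The main technical subtlety I anticipate is the degenerate case in which some $k_j = 0$: then $c_j = 0$, $\widehat{\rho}_j = 1$, and the polyellipse $\mathcal{E}_{\pmb{\widehat{\rho}}}$ collapses in that coordinate, so strictly speaking Corollary \ref{coro:MulChebU} does not apply directly. I would sidestep this by a standard limiting argument: replace each offending $\widehat{\rho}_j$ by $1 + \delta$ for small $\delta > 0$ (the analyticity of $f$ on a slightly enlarged polyellipse is guaranteed since $D_{h,0}$ is an open neighborhood of the collapsed limit region), apply the non-degenerate bound, and then let $\delta \to 0^+$. Both sides remain continuous in $\delta$, so the estimate persists in the limit.
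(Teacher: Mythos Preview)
Your proposal is correct and follows essentially the same route as the paper: establish analyticity of $f$ in the polyellipse $\mathcal{E}_{\pmb{\widehat{\rho}}}$ via the Minkowski sum inclusion (exactly as in Theorem \ref{thm:akbound2}), invoke Corollaries \ref{coro:MulChebT} and \ref{coro:MulChebU}, and then replace $\pmb{\widehat{\rho}}^{\vec{k}}$ (respectively $\pmb{\widehat{\rho}}^{\vec{k}+1}$) by $\rho^{\|\vec{k}\|_2}$ (respectively $\rho^{\|\vec{k}\|_2+1}$) using \eqref{eq:LowBound1} and the norm inequality $\|\vec{k}\|_1\geq\|\vec{k}\|_2$. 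The paper carries out the computation for \eqref{eq:chebybound3} in exactly the form you wrote and merely remarks that \eqref{eq:chebybound2} is analogous.

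One remark: you are actually more careful than the paper in flagging the degeneracy $\widehat{\rho}_j=1$ when $k_j=0$; the paper passes over this in silence. Your limiting argument is the right idea, but the justification ``$D_{h,0}$ is an open neighborhood of the collapsed limit region'' deserves a line of explanation. A clean way to make it precise is to perturb the parameters rather than the ellipse directly: for $\vec{k}\neq\vec{0}$, replace $h_j=0$ by a small $h_j'>0$ in the vanishing coordinates and shrink the nonzero $h_i$ slightly so that $\sum_j (h_j')^2\leq h^2$ still holds; the Minkowski sum inclusion then yields a genuinely non-degenerate polyellipse inside $D_{h,0}$, Corollaries \ref{coro:MulChebT}--\ref{coro:MulChebU} apply, and since $k_j=0$ the factors $(\widehat{\rho}_j')^{k_j}$ contribute nothing to $\pmb{\widehat{\rho}}^{\vec{k}}$, so the limit recovers the stated bound.
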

\begin{proof}
To show \eqref{eq:chebybound3}, we note that, as in the proof of Theorem \ref{thm:akbound2}, $f$ is analytic in the polyellipse
$\mathcal{E}_{\pmb{\widehat{\rho}}} := \bigotimes_{j=1}^{d} \mathcal{E}_{{\widehat{\rho}}_j}$, where $\widehat{\rho}_j = c_jh +
\sqrt{1+(c_jh)^2}$. This, together with Corollary \ref{coro:MulChebU}, implies that
\begin{align}\label{eq:akUbound2}
\left|a_{\vec{k}}^U \right| \leq
\frac{\widehat{\mathcal{B}}_{f}L(\mathcal{E}_{\pmb{\widehat{\rho}}})}{\pi^d
\pmb{\widehat{\rho}}^{\vec{k}+1}}.
\end{align}
In view of \eqref{eq:LowBound1}, we have
\begin{align}\label{eq:Lowbound3}
\pmb{\widehat{\rho}}^{\vec{k}+1} = \prod_{j=1}^{d}
\widehat{\rho}_j^{k_j+1} \geq \prod_{j=1}^{d}
\rho^{\frac{k_j(k_j+1)}{\|\vec{k}\|_2}} = \rho^{\|\vec{k}\|_2 +
\frac{\|\vec{k}\|_1}{\|\vec{k}\|_2}} \geq \rho^{\|\vec{k}\|_2 + 1},
\end{align}
where we have made use of the fact that $\|\vec{k}\|_1\geq \|\vec{k}\|_2$
in the last step (cf. Lemma \ref{lem:NormIneq} below).
Combining \eqref{eq:Lowbound3} and \eqref{eq:akUbound2} then gives
\eqref{eq:chebybound3}.

The proof of \eqref{eq:chebybound2} is similar, where one needs to use the estimate \eqref{eq:chebybound}. We omit the details here.
This completes the proof of Corollary \ref{cor:multcheby}.
\end{proof}

\section{Multivariate Gegenbauer approximation of analytic functions}\label{sec:MultiExpRate}
In this section, we investigate the error bound of the multivariate
Gegenbauer approximation of analytic functions with the multi-indices chosen from a
specified index set.

\subsection{Multivariate Gegenbauer approximation with an $\ell^{q}$ ball index set}
We are interested in the multivariate Gegenbauer approximation corresponding to
an $\ell^{q}$ ball index set in $\mathrm{N}_0^{d}$ defined by
\begin{align}\label{eq:BallSet}
\Lambda_N^{q} = \left\{ \vec{k}\in \mathrm{N}_0^d ~\bigg{|}~\|\vec{k}\|_{q} \leq
N \right\},
\end{align}
where $q>0$ and $\|\vec{k}\|_{q}$ is defined as in \eqref{eq:Norm}.
Note that such an index set is a lower set and includes some
important index sets as special cases. For example, the total,
Euclidean and maximal degrees of a multivariate polynomial at most
$N$ correspond to $q=1,2,\infty$ in \eqref{eq:BallSet},
respectively. To gain some intuition regarding the distribution of
the grids in $\Lambda_N^{q}$, we plot in Figure \ref{fig:BallSet}
the index set $\Lambda_{30}^{q}$ for $d=2$ and three different
values of $q$.

\begin{figure}[ht]
\centering
\includegraphics[width=4.8cm,height=4.8cm]{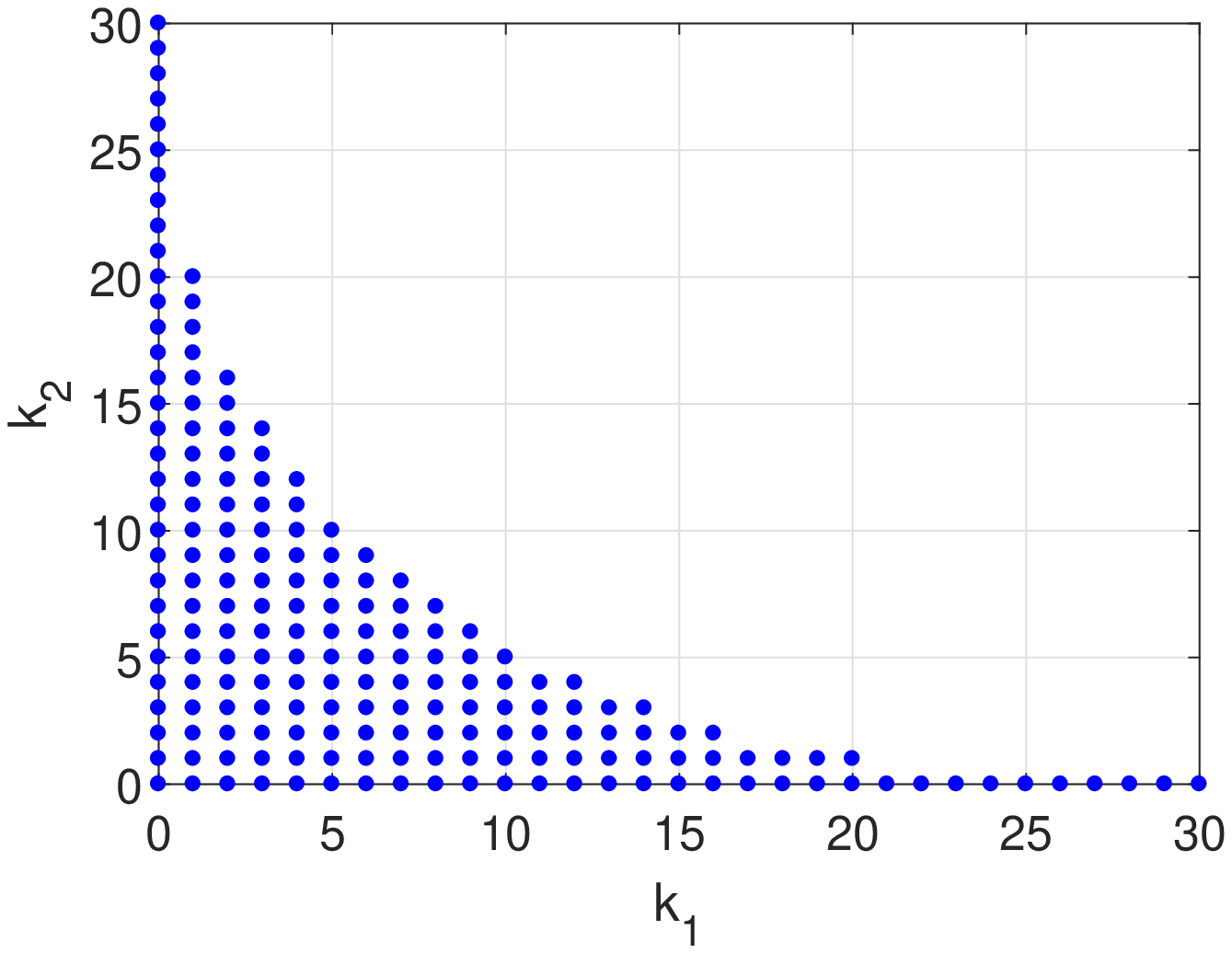}~
\includegraphics[width=4.8cm,height=4.8cm]{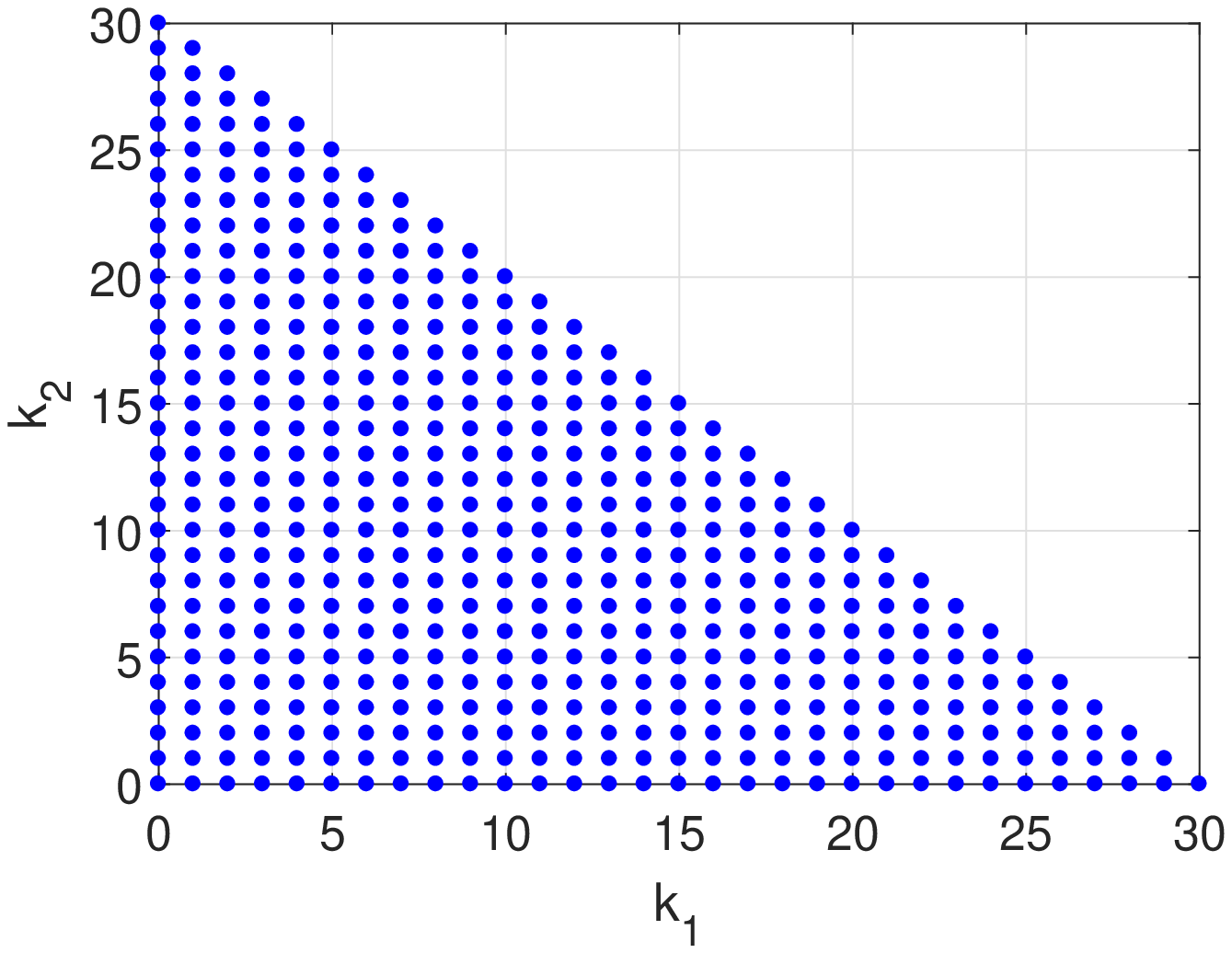}~
\includegraphics[width=4.8cm,height=4.8cm]{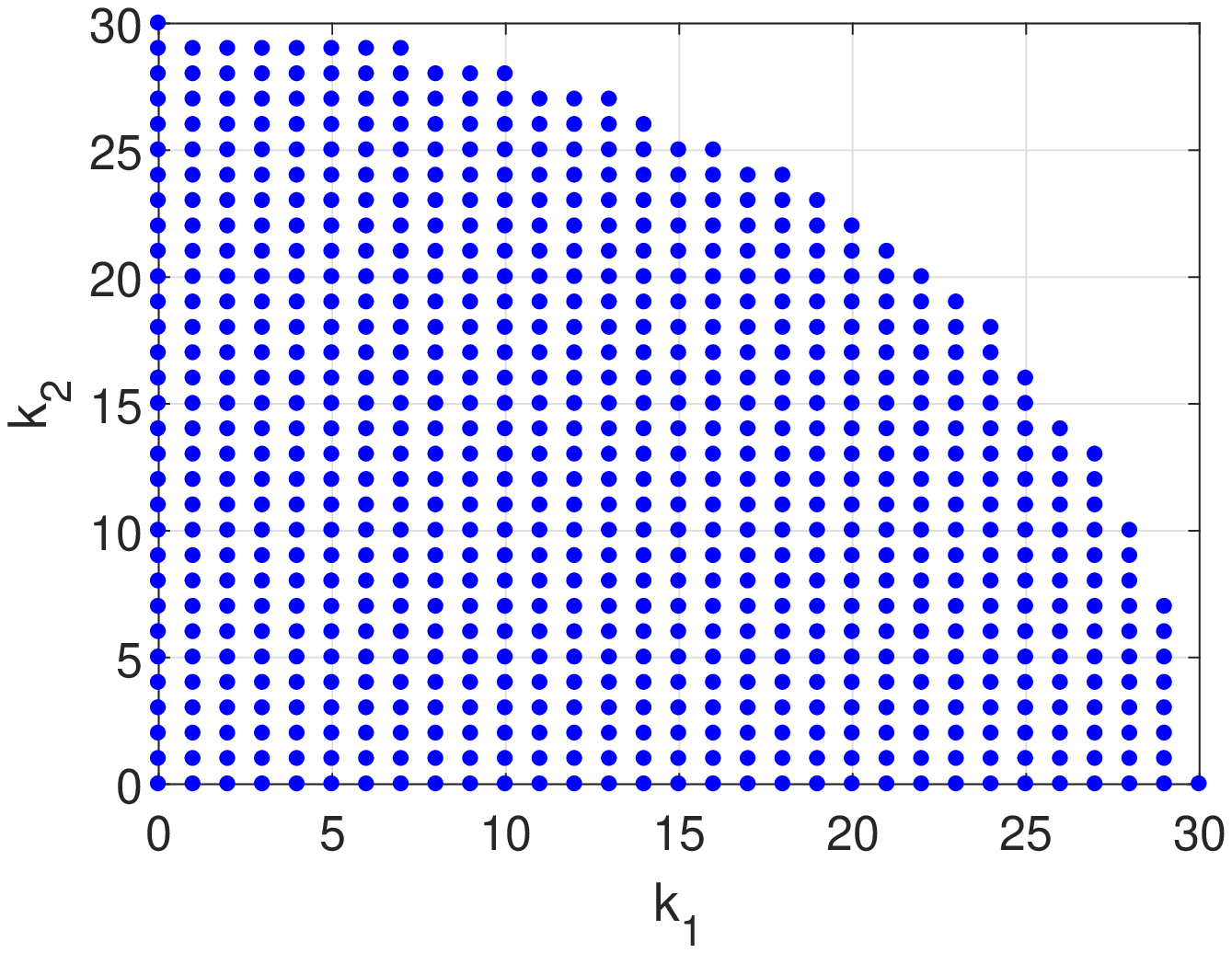}
\caption{Illustration of the index set $\Lambda_{30}^{q}$ for
$q=1/2$ (left), $q=1$ (middle) and $q=2$ (right) in dimension $d=2$.
} \label{fig:BallSet}
\end{figure}

We now consider the finite-dimensional polynomial space $\vec{P}_N^q$ corresponding to the $\ell^{q}$ ball index set, namely,
\begin{align}
\vec{P}_N^q := \spn \left \{ C_{\vec{k}}^{(\lambda)}(\mathbf{x}) ~\bigg|~ \vec{k}\in
\Lambda_N^q \right \}.
\end{align}
Let $\vec{\Pi}_N^{\lambda}$ be the orthogonal projection from the space
$L_{\omega_{\lambda}(\mathbf{x})}^2(\Omega_d)$ to  $\vec{P}_N^q$ such that
\begin{align}
\int_{\Omega_d} ((\vec{\Pi}_N^{\lambda}f)(\vec{x}) - f(\vec{x}) )
\omega_{\lambda}(\mathbf{x}) Q(\vec{x}) \mathrm{d}\vec{x} = 0,
\qquad \forall Q(\vec{x})\in \vec{P}_N^q.
\end{align}
It is well-known that $(\vec{\Pi}_N^{\lambda}f)(\vec{x})$ can be
written explicitly as
\begin{align}\label{eq:MuGeEx}
(\vec{\Pi}_N^{\lambda}f)(\vec{x}) =\left\{
                                     \begin{array}{ll}
                                      \sum_{\vec{k}\in \Lambda_N^q}
a_\vec{k} C_{\vec{k}}^{(\lambda)}(\mathbf{x}), & \hbox{$\lambda>0$,}
\\[2ex]
                                      \sum_{\vec{k}\in \Lambda_N^q}
a_\vec{k}^T T_{\vec{k}}(\mathbf{x}), & \hbox{$\lambda=0$,}
                                     \end{array}
                                   \right.
\end{align}
where the coefficients $a_\vec{k}$ and $a_\vec{k}^T$ are given in
\eqref{def:akintegral} and \eqref{eq:MulChebT}, respectively. The
main result of this section is the following theorem regarding the
explicit error bound of the multivariate Gegenbauer approximation
$(\vec{\Pi}_N^{\lambda}f)(\vec{x})$ in the uniform norm.

\begin{theorem}\label{thm:BallSet}

Let $\lambda\geq0$ and let $(\vec{\Pi}_N^{\lambda}f)(\vec{x})$
defined in \eqref{eq:MuGeEx} be the multivariate Gegenbauer
approximation associated with the index set $\Lambda_N^{q}$. Suppose
that $f$ satisfies Assumption \ref{assump2}. Then, we have,
\begin{align}\label{eq:rateGegen}
\max_{\vec{x}\in \Omega_d}
\left| f(\vec{x}) -
(\vec{\Pi}_N^{\lambda}f)(\vec{x}) \right|  \leq \mathcal{K}
\rho^{-\frac{N}{\gamma}}, \qquad N>\frac{\lambda \gamma d}{ \ln \rho},
\end{align}
where $\mathcal{K}$ is a constant independent of the index set (see \eqref{eq:kgegen} and \eqref{eq:kcheby} below for explicit representations),
$\rho=h+\sqrt{1+h^2}$ (with $h$ arising from Assumption II),
and
\begin{align}\label{eq:Gamma}
\gamma = \left\{\begin{array}{ccc}
               {\displaystyle  1  },   & \mbox{$q\geq2$, }\\ [10pt]
               {\displaystyle  d^{\frac{1}{q}-\frac{1}{2}} },    & \mbox{$0<q<2$}.
                  \end{array}
                  \right.
\end{align}

\end{theorem}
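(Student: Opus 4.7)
The plan is to expand the error as a tail of the multivariate Gegenbauer series of $f$, bound each term pointwise via the uniform estimate $|C_{\vec{k}}^{(\lambda)}(\vec{x})|\leq C_{\vec{k}}^{(\lambda)}(1)$ from \eqref{eq:GegenBound} together with the coefficient bound of Theorem \ref{thm:akbound2} (and Corollary \ref{cor:multcheby} when $\lambda=0$), and then control the sum over $\vec{k}\notin\Lambda_N^q$ by a simple norm-comparison inequality between $\|\cdot\|_q$ and $\|\cdot\|_2$.

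By orthogonality and \eqref{eq:GegenBound}, for $\lambda>0$,
\[
\max_{\vec{x}\in\Omega_d}\bigl|f(\vec{x})-(\vec{\Pi}_N^\lambda f)(\vec{x})\bigr| \;\leq\; \sum_{\vec{k}\notin\Lambda_N^q}|a_{\vec{k}}|\,C_{\vec{k}}^{(\lambda)}(1).
\]
Substituting Theorem \ref{thm:akbound2} and using the Stirling asymptotic $C_k^{(\lambda)}(1)=\Gamma(k+2\lambda)/(k!\,\Gamma(2\lambda))=O(k^{2\lambda-1})$ I obtain
\[
|a_{\vec{k}}|\,C_{\vec{k}}^{(\lambda)}(1) \;\leq\; C_0\,\rho^{-\|\vec{k}\|_2}\prod_{k_j\neq 0}k_j^{\lambda},
\]
with $C_0$ independent of $\vec{k}$. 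The key point is that $\widehat{\rho}_j$ depends on $\vec{k}$ only through $c_j=k_j/\|\vec{k}\|_2\in[0,1]$, and, because $\epsilon>0$ is enforced when $\lambda\in(0,1)$, each $\widehat{\rho}_j$ stays in a compact subset of $(1,\infty)$ bounded away from $1$; hence $\overline{D}_{\widehat{\rho}_j}^{(\lambda)}$, $D_{\widehat{\rho}_j}^{(\lambda)}$ and $L(\mathcal{E}_{\widehat{\pmb{\rho}}})$ are uniformly bounded. The Chebyshev case $\lambda=0$ is analogous but cleaner: Corollary \ref{cor:multcheby} gives $|a_{\vec{k}}^T|\leq 2^d\widehat{\mathcal{B}}_f\rho^{-\|\vec{k}\|_2}$ and $|T_{\vec{k}}(\vec{x})|\leq 1$, with no polynomial prefactor, and yields the closed form in \eqref{eq:kcheby}.

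The geometric input is a simple norm-comparison lemma (to be recorded as Lemma \ref{lem:NormIneq}): for every $\vec{k}\in\mathbb{N}_0^d$, $\|\vec{k}\|_2\geq\|\vec{k}\|_q/\gamma$, with $\gamma$ as in \eqref{eq:Gamma}. This is $\ell^p$-monotonicity $\|\cdot\|_q\leq\|\cdot\|_2$ when $q\geq 2$, and H\"older's inequality $\|\vec{k}\|_q\leq d^{1/q-1/2}\|\vec{k}\|_2$ when $0<q<2$. In particular, $\vec{k}\notin\Lambda_N^q$ forces $\|\vec{k}\|_2>N/\gamma$, which is the source of the exponent $N/\gamma$ in the final rate.

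What remains is to sum $\prod_{k_j\neq 0}k_j^{\lambda}\,\rho^{-\|\vec{k}\|_2}$ over $\{\|\vec{k}\|_q>N\}$ and extract $\rho^{-N/\gamma}$ with an index-set-independent constant in front. The plan is to use $\|\vec{k}\|_1\leq\sqrt{d}\,\|\vec{k}\|_2$ to make the sum separable, so that it factorises coordinatewise into $\prod_{j=1}^d\bigl(1+\sum_{k\geq 1}k^{\lambda}r^{-k}\bigr)$ for a suitable $r=r(\rho,d)>1$; the threshold hypothesis $N>\lambda\gamma d/\ln\rho$ is precisely what is needed to absorb the polynomial factor $\prod k_j^{\lambda}$ into the exponential decay while keeping the sharp exponent $N/\gamma$, so that $\mathcal{K}$ depends only on $f,\lambda,d,h,\epsilon,q$ and not on $N$. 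I expect this absorption step to be the main technical obstacle: without careful use of the $N$-threshold one only obtains $\rho^{-cN/\gamma}$ with some $c<1$, so the condition on $N$ has to be exploited to preserve $c=1$. Once done, the explicit expressions \eqref{eq:kgegen}--\eqref{eq:kcheby} for $\mathcal{K}$ are read off directly from the coordinatewise sums.
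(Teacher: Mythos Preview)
Your setup is right and matches the paper: tail bound via \eqref{eq:GegenBound}, coefficient estimate from Theorem~\ref{thm:akbound2}, uniform control of the $\widehat{\rho}_j$-dependent constants, and the norm comparison $\|\vec{k}\|_2\ge \|\vec{k}\|_q/\gamma$ (Lemma~\ref{lem:NormIneq}). The gap is in the last step. Your proposed coordinatewise factorisation replaces $\rho^{-\|\vec{k}\|_2}$ by $r^{-\|\vec{k}\|_1}$ with $r=\rho^{1/\sqrt d}$; once you do that, the best exponential you can extract from the tail $\{\|\vec{k}\|_q>N\}$ is of order $r^{-N/\gamma'}$ for some $\gamma'$, which is strictly worse than $\rho^{-N/\gamma}$. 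No use of the threshold $N>\lambda\gamma d/\ln\rho$ can repair this: that threshold is a fixed number independent of $N$ and cannot recover a lost factor in the exponent. You are also misreading the target: the explicit $\mathcal{K}$ in \eqref{eq:kgegen}--\eqref{eq:kcheby} \emph{does} depend on $N$, polynomially; ``independent of the index set'' is not the same as ``independent of $N$''.

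The paper avoids factorisation entirely. After reaching $\sum_{\|\vec{k}\|_q>N}\|\vec{k}\|_1^{\lambda d}\rho^{-\|\vec{k}\|_2}$ (via AM--GM on $\prod k_j$ and Lemma~\ref{lemma:inequality ratio gamma}), it bounds $\|\vec{k}\|_1\le\sqrt d\,\|\vec{k}\|_2$ and then compares the resulting radial sum $\sum_{\|\vec{k}\|_2>N/\gamma}\|\vec{k}\|_2^{\lambda d}\rho^{-\|\vec{k}\|_2}$ with the spherical-coordinate integral $\int_{N/\gamma}^\infty t^{\lambda d+d-1}\rho^{-t}\,\mathrm{d}t$; the threshold $N>\lambda\gamma d/\ln\rho$ is exactly what guarantees $t\mapsto t^{\lambda d}\rho^{-t}$ is decreasing on the range, so the sum-to-integral comparison is valid. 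The integral is then handled by repeated integration by parts (Lemma~\ref{lem:IntBound}), which produces the sharp factor $\rho^{-N/\gamma}$ times a polynomial in $N/\gamma$---that polynomial is the $N$-dependence you see in $\mathcal{K}$. So the missing idea is the passage to a radial integral rather than a product; this is what preserves $c=1$ at the price of a polynomial prefactor.
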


Some comments regarding Theorem \ref{thm:BallSet} are given below.
\begin{itemize}
\item Up to the algebraic pre-factor, the rate of convergence
of the multivariate Chebyshev approximation established in
\eqref{eq:rateGegen} was first obtained by Trefethen in
\cite{trefethen2017multivatiate} for $q=1,2,\infty$. Here we have
extended his result to a more general setting.

\item For the multivariate normalized Legendre approximation associated with the index
set $\Lambda_N^q$, our analysis will also lead to the same error bound as shown in \eqref{eq:rateGegen}, although the constant
$\mathcal{K}$ might be different.

\item For the bivariate Runge function
$ f(\vec{x}) = 1/(x_1^2+x_2^2+h^2), h>0, $ it is readily seen from
\cite[Theorem~3.1 and Lemma~4.6]{bos2018bernstein} that
\begin{equation}\label{eq:BerWal}
\displaystyle \limsup_{N\to\infty} D_{N,q}(f)^{1/N} \leq 1/\rho,
\end{equation}
where $q\geq2$ and
\begin{align}
D_{N,q}(f):=\inf\left\{\max_{\vec{x}\in \Omega_2} \left|f -
\sum_{\vec{k}\in \Lambda_N^{q}} c_{\vec{k}} \vec{x}^{\vec{k}}
\right|,~ c_{\vec{k}}\in \mathbb{C} \right\}. \nonumber
\end{align}

Hence, by comparing \eqref{eq:rateGegen} with \eqref{eq:BerWal} we
can conclude that the multivariate Gegenbauer and Chebyshev
approximations with an $\ell^q$ ball index set ($q\geq2$) achieve
the best possible rate of convergence of polynomial approximations in this case.
\end{itemize}


We next present the proof of Theorem \ref{thm:BallSet}, and start with some auxiliary results to be used later.

\subsection{Some auxiliary lemmas}
\begin{lemma}\label{lem:NormIneq}
Let $\|\vec{k}\|_q$ be defined in \eqref{eq:Norm}. For $r\geq
s>0$ and $\vec{k}\in \mathrm{N}_0^d$, we have
\begin{align}\label{eq:NormIneq}
\|\vec{k}\|_r \leq \|\vec{k}\|_s \leq d^{\frac{1}{s}-\frac{1}{r}}
\|\vec{k}\|_r.
\end{align}
Moreover, it is worthwhile to point out that the above inequalities are optimal in the sense
that there are no smaller constants such that they still
hold for all $\vec{k}\in \mathrm{N}_0^d$.
\end{lemma}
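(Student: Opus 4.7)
The plan is to treat this as the standard pair of monotonicity and equivalence inequalities between $\ell^s$ and $\ell^r$ norms on $\mathbb{R}^d$, specialized here to nonnegative integer vectors, and then to close by exhibiting explicit extremizers for optimality.

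For the left inequality $\|\vec{k}\|_r\leq \|\vec{k}\|_s$, I would argue by scaling. Assuming $\vec{k}\neq \vec{0}$, divide through by $\|\vec{k}\|_s$ to reduce to the case $\|\vec{k}\|_s=1$, which forces $0\leq k_i/\|\vec{k}\|_s \leq 1$ for each $i$. Because $r\geq s$, one then has $(k_i/\|\vec{k}\|_s)^r\leq (k_i/\|\vec{k}\|_s)^s$ componentwise, and summing over $i$ yields the desired bound after taking $r$-th roots. The endpoint $r=\infty$ follows at once from $\max_i k_i \leq (\sum_i k_i^s)^{1/s}$.

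For the right inequality, the plan is to apply H\"older's inequality with conjugate exponents $r/s$ and $r/(r-s)$ to the factorization $k_i^s = k_i^s\cdot 1$, which produces
\begin{equation*}
\sum_{i=1}^d k_i^s \;\leq\; \Bigl(\sum_{i=1}^d k_i^r\Bigr)^{s/r} d^{(r-s)/r}.
\end{equation*}
Taking $s$-th roots gives exactly the prefactor $d^{1/s-1/r}$. The endpoint $r=\infty$ is immediate from $\sum_i k_i^s \leq d\max_i k_i^s$.

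For optimality I would exhibit explicit tight vectors. Taking $\vec{k}=(1,0,\ldots,0)$ makes $\|\vec{k}\|_r=\|\vec{k}\|_s=1$, which rules out any constant below $1$ on the left. Taking $\vec{k}=(n,n,\ldots,n)$ for any positive integer $n$ gives $\|\vec{k}\|_s=d^{1/s}n$ and $\|\vec{k}\|_r=d^{1/r}n$, whose ratio is exactly $d^{1/s-1/r}$; this rules out any smaller constant on the right. I do not anticipate any substantive obstacle; the only point that requires any care is the $q=\infty$ endpoint, which is handled by the direct observations noted above.
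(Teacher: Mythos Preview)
Your argument is correct and complete: the scaling reduction handles the left inequality for all $r\ge s>0$, the H\"older step with exponents $r/s$ and $r/(r-s)$ gives the right inequality (with the $r=\infty$ endpoint handled directly), and your two explicit vectors witness optimality. The paper itself does not supply a proof here; it merely cites the classical $q\ge 1$ case from a textbook and leaves the extension to $0<q<1$ to the reader, so your write-up is in fact more detailed than what the paper provides.
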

We note that, if $q \geq 1$, $\|\cdot \|_q$ defines a norm in $\mathbb{R}^d$ and the inequalities
\eqref{eq:NormIneq} are well-known (cf. \cite[Proposition 2.10]{wendland2018numerical}). It comes out this result can be extended to the case $q>0$, we leave the proof to the interested reader.

The second lemma is about the upper bound of an integral over an unbounded interval.

\begin{lemma}\label{lem:IntBound}
Let $a,b>0$ and $M>0$. We have
\begin{align}\label{eq:IntBound}
\int_{M}^{\infty} e^{-ax} x^b \mathrm{d}x \leq e^{-aM} \left(
\sum_{j=1}^{m_b+1} \frac{M^{b-j+1}}{a^j} \prod_{i=0}^{j-2}(b-i)
\right),
\end{align}
where $m_b$ is a positive integer depending on $b$ that is uniquely defined by
\begin{equation}\label{def:mb}
m_b=\left\{
      \begin{array}{ll}
        b, & \hbox{$b\in \mathbb{N} $,} \\[1ex]
        \lfloor b \rfloor+1, & \hbox{$b \notin \mathbb{N}$,}
      \end{array}
    \right.
\end{equation}
 and the product
in the right-hand side of \eqref{eq:IntBound} is assumed to be one
for $j<2$. In \eqref{def:mb}, $\lfloor x \rfloor$ denotes the integral part of a real number $x$.
\end{lemma}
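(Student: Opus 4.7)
The plan is to prove \eqref{eq:IntBound} by iterated integration by parts on the integrand $e^{-ax}x^b$. A single application with $u=x^b$, $\mathrm{d}v=e^{-ax}\mathrm{d}x$ gives the one-step identity
\begin{equation}
\int_M^\infty e^{-ax}x^b\,\mathrm{d}x = \frac{M^b}{a}e^{-aM} + \frac{b}{a}\int_M^\infty e^{-ax}x^{b-1}\,\mathrm{d}x,
\end{equation}
and iterating $j$ times ($j\geq 1$) yields, by a short induction, the exact relation
\begin{equation}
\int_M^\infty e^{-ax}x^b\,\mathrm{d}x = e^{-aM}\sum_{k=1}^{j}\frac{M^{b-k+1}}{a^k}\prod_{i=0}^{k-2}(b-i) + \frac{\prod_{i=0}^{j-1}(b-i)}{a^j}\int_M^\infty e^{-ax}x^{b-j}\,\mathrm{d}x.
\end{equation}

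The next step is to specialize to $j=m_b$. Definition \eqref{def:mb} is calibrated so that $m_b\geq b$ in both cases, with equality precisely when $b\in\mathbb{N}$; equivalently, $b-m_b\leq 0$ always. This sign condition lets me dominate the surviving integral by monotonicity: since $x^{b-m_b}\leq M^{b-m_b}$ for every $x\geq M>0$, one has
\begin{equation}
\int_M^\infty e^{-ax}x^{b-m_b}\,\mathrm{d}x \leq M^{b-m_b}\int_M^\infty e^{-ax}\,\mathrm{d}x = \frac{M^{b-m_b}}{a}e^{-aM}.
\end{equation}
Substituting this into the iterated identity with $j=m_b$, the bounded remainder contributes exactly the $k=m_b+1$ summand in \eqref{eq:IntBound}, and combining with the first $m_b$ boundary terms delivers the claim. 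Note that when $b\in\mathbb{N}$ the final bounding step is an equality rather than a strict inequality, so the formula is in fact an identity in that case.

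Two small bookkeeping points I would verify en route. First, the coefficients $\prod_{i=0}^{k-2}(b-i)$ are non-negative for every $k\leq m_b+1$: when $b$ is a positive integer they truncate to ordinary descending factorials ending at $1$, while for non-integer $b$ the smallest factor appearing in the final product is $b-(m_b-1)=\{b\}>0$, where $\{b\}\in(0,1)$ denotes the fractional part of $b$. Hence the inequality is a genuine upper bound with the stated sign. Second, convergence of every intermediate integral $\int_M^\infty e^{-ax}x^{b-j}\,\mathrm{d}x$ is automatic since $M>0$, so no care is needed to justify the iteration.

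There is essentially no analytic obstacle here; the only thing that requires attention is lining up the summation range in \eqref{eq:IntBound} with the number of integration-by-parts iterations so that the final residual integral is absorbed as precisely the $k=m_b+1$ summand, and this is exactly what the piecewise definition of $m_b$ in \eqref{def:mb} accomplishes.
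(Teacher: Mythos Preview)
Your proof is correct and follows essentially the same route as the paper: iterate integration by parts $m_b$ times, then bound the residual integral using $x^{b-m_b}\le M^{b-m_b}$ for $x\ge M$ since $b-m_b\le 0$. Your additional remarks on nonnegativity of the coefficients and the equality case for $b\in\mathbb{N}$ are correct refinements not stated in the paper.
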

\begin{proof}
With $m_b$ given in \eqref{def:mb}, we obtain from integration by parts $m_b$ times that
\begin{align}\label{eq:IntS1}
&\int_{M}^{\infty} e^{-ax} x^b \mathrm{d}x = -\frac{1}{a}
\int_{M}^{\infty} x^b \frac{\mathrm{d}}{\mathrm{d}x}\left(e^{-ax}\right)
= \frac{M^b}{a} e^{-aM} + \frac{b}{a} \int_{M}^{\infty} e^{-ax}
x^{b-1} \mathrm{d}x = \cdots
\nonumber \\
&= e^{-aM} \sum_{j=1}^{m_b} \frac{M^{b-j+1}}{a^j}
\prod_{i=0}^{j-2}(b-i) + \frac{1}{a^{m_b}} \prod_{i=0}^{m_b-1}(b-i)
\int_{M}^{\infty} x^{b-m_b} e^{-ax} \mathrm{d}x.
\end{align}
Note that $-1< b-m_b \leq 0$, it is easily seen that
\begin{align}\label{eq:IntS2}
\int_{M}^{\infty} x^{b-m_b} e^{-ax} \mathrm{d}x \leq M^{b-m_b}
\int_{M}^{\infty} e^{-ax} \mathrm{d}x = \frac{M^{b-m_b}}{a} e^{-aM}.
\end{align}
Combining \eqref{eq:IntS1} and \eqref{eq:IntS2} gives us the desired
result.
\end{proof}

Finally, we also need the following lemma which gives us an explicit upper bound of the ratio of Gamma functions.
\begin{lemma}\label{lemma:inequality ratio gamma}
Let $k\geq1$ and $a,b\in \mathbb{R}$. For $k+a>1$ and $k+b>1$, we
have
\begin{align}
\frac{\Gamma(k+a)}{\Gamma(k+b)} \leq \Upsilon_{k}^{a,b} k^{a-b},
\end{align}
where
\begin{align}\label{def:upsilon}
\Upsilon_{k}^{a,b} = \exp\left( \frac{a-b}{2(k+b-1)} +
\frac{1}{12(k+a-1)} + \frac{(a-1)(a-b)}{k} \right).
\end{align}
\end{lemma}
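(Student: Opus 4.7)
The plan is to invoke Robbins' explicit Stirling bound
$$\Gamma(z+1)=\sqrt{2\pi z}\,z^z e^{-z}\,e^{\lambda(z)},\qquad 0<\lambda(z)<\frac{1}{12z},$$
applied separately to $\Gamma(k+a)=\Gamma((k+a-1)+1)$ and $\Gamma(k+b)=\Gamma((k+b-1)+1)$. Setting $u=k+a-1$ and $v=k+b-1$ (so $u-v=a-b$) and forming the ratio, the exponentials $e^{-u}/e^{-v}=e^{-(a-b)}$ separate off, and the remainder satisfies
$$e^{\lambda(u)-\lambda(v)}\leq e^{\lambda(u)}\leq e^{1/(12(k+a-1))},$$
which accounts for the middle summand of $\Upsilon_k^{a,b}$. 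The hypotheses $k+a>1$ and $k+b>1$ ensure that $u,v>0$ so this step is legitimate.

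Next, I would handle the polynomial factor $u^u/v^v$ by writing $u^u=u^v\,u^{u-v}=u^v\,u^{a-b}$, so that $u^u/v^v=(1+(a-b)/v)^v\,u^{a-b}$, and then invoking the classical inequality $(1+x/n)^n\leq e^x$ to conclude $(u/v)^v\leq e^{a-b}$; this exactly cancels the stray $e^{-(a-b)}$. For the square-root factor I would use $\sqrt{1+x}\leq e^{x/2}$, yielding
$$\sqrt{u/v}=\sqrt{1+(a-b)/v}\leq e^{(a-b)/(2v)}=\exp\!\Bigl(\tfrac{a-b}{2(k+b-1)}\Bigr),$$
which produces the first summand of $\Upsilon_k^{a,b}$. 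Combining the three bounds yields
$$\frac{\Gamma(k+a)}{\Gamma(k+b)}\leq (k+a-1)^{a-b}\exp\!\left(\frac{a-b}{2(k+b-1)}+\frac{1}{12(k+a-1)}\right).$$

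Finally, I would replace $(k+a-1)^{a-b}$ by $k^{a-b}$ by factoring
$(k+a-1)^{a-b}=k^{a-b}\bigl(1+(a-1)/k\bigr)^{a-b}$
and applying $\ln(1+(a-1)/k)\leq (a-1)/k$, which after multiplication by $a-b$ produces $e^{(a-1)(a-b)/k}$, giving the third summand of $\Upsilon_k^{a,b}$ and completing the proof. The delicate step, and the likely main obstacle, is exactly this last one: multiplying the inequality $\ln(1+(a-1)/k)\leq (a-1)/k$ by $(a-b)$ preserves its direction only when $a\geq b$, which is the range relevant for the subsequent applications of the lemma; in the opposite regime one would instead absorb the sign by an elementary case analysis or by replacing the last log inequality with its reverse form. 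All other ingredients are routine consequences of the concavity of $\ln$ together with Robbins' explicit remainder in Stirling's formula.
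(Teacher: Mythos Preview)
The paper does not give a proof of this lemma at all; it simply cites \cite[Lemma~2.1]{zhao2013sharp}. Your argument via Robbins' explicit Stirling remainder is the natural one and is presumably close to what that reference does, so in that sense there is nothing to compare against here.

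For $a\ge b$ your chain of inequalities is correct. The genuine gap is the regime $a<b$, which you flag but do not actually close. The difficulty is not merely one of sign-bookkeeping: the inequality you would need in the final step,
\[
(k+a-1)^{a-b}\ \le\ k^{a-b}\,\exp\!\Bigl(\tfrac{(a-1)(a-b)}{k}\Bigr),
\]
is simply false when $a<b$. For example, with $a=0$, $b=1$, $k=2$ the left-hand side equals $1$ while the right-hand side equals $\tfrac12 e^{1/2}\approx 0.824$. Equivalently, your intermediate bound
\[
(k+a-1)^{a-b}\exp\!\Bigl(\tfrac{a-b}{2(k+b-1)}+\tfrac{1}{12(k+a-1)}\Bigr)\approx 0.846
\]
already exceeds the target $\Upsilon_k^{a,b}k^{a-b}\approx 0.698$ at these values, so no ``reverse log inequality'' or elementary case split can rescue this particular factorization. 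This matters for the paper: the lemma is invoked with $a=2\lambda$, $b=1$ for all $\lambda>0$, and the range $0<\lambda<\tfrac12$ is precisely the case $a<b$. To cover it you need to reorganize the Stirling estimate so that the power $k^{a-b}$ emerges without passing through $(k+a-1)^{a-b}$ as a stepping stone.
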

\begin{proof}
See \cite[Lemma~2.1]{zhao2013sharp}.
\end{proof}

We are now ready to prove Theorem \ref{thm:BallSet}.

\subsection{Proof of Theorem \ref{thm:BallSet}}
By \eqref{eq:gegenbauerexpansion} and \eqref{eq:MuGeEx}, it follows that, for $\lambda>0$,
\begin{align}\label{eq:max1}
\max_{\vec{x}\in \Omega_d} \left| f(\vec{x}) -
(\vec{\Pi}_N^{\lambda}f)(\vec{x}) \right|
 \leq \sum_{\vec{k}\in \mathrm{N}_0^{d}\setminus\Lambda_N^q} |
a_\vec{k}| \max_{\vec{x}\in \Omega_d}
\left|C_{\vec{k}}^{(\lambda)}(\mathbf{x})\right| = \sum_{\vec{k}\in \mathrm{N}_0^{d}\setminus\Lambda_N^q} |
a_\vec{k}| C_{\vec{k}}^{(\lambda)}(\mathbf{1}),
\end{align}
where we have made use of \eqref{eq:GegenBound} in the last step.

To this end, with $\widehat{\rho}_j$, $j=1,\ldots,d$, defined in
\eqref{eq:rhoHat}, it is readily seen that $\widehat{\rho}_j \leq
\rho_\epsilon := \sqrt{h^2+\epsilon}+\sqrt{1+h^2+\epsilon}$, and
from Assumption \ref{assump2} on $f$ and the proof of Theorem
\ref{thm:akbound2} that $\mathcal{E}_{\widehat{\pmb{\rho}}} =
\bigotimes_{j=1}^{d} \mathcal{E}_{\widehat{\rho}_j} \subseteq
D_{h,\epsilon}$. Thus, we conclude from \eqref{eq:akupbound2} that,
for any multi-index $\vec{k}\in \mathrm{N}_0^d$,
\begin{align}\label{eq:akupbound4}
\left|a_{\vec{k}}\right| &\leq
\frac{\max_{\vec{z}\in D_{h,\epsilon}}|f(\vec{z})| L(\mathcal{E}_{\rho_\epsilon})^d}{\pi^d
\rho^{\|\vec{k}\|_2}} \prod_{\substack{1\leq i \leq d \\ k_i =
0}}\overline{D}_{\widehat{\rho}_i}^{(\lambda)}
\prod_{\substack{1\leq j \leq d
\\ k_j \neq
0}} k_j^{1-\lambda} D_{\widehat{\rho}_j}^{(\lambda)},
\end{align}
where we emphasize that the constant $\displaystyle \max_{\vec{z}\in D_{h,\epsilon}}|f(\vec{z})| L(\mathcal{E}_{\rho_\epsilon})^d$ is independent of $\vec{k}$.
This, together with \eqref{eq:max1} and \eqref{eq:normalization gegenbauer}, implies that
\begin{align}\label{eq:MaxErrStepI}
\max_{\vec{x}\in \Omega_d} \left| f(\vec{x}) -
(\vec{\Pi}_N^{\lambda}f)(\vec{x}) \right|
& \leq \frac{\max_{\vec{z}\in D_{h,\epsilon}}|f(\vec{z})| L(\mathcal{E}_{\rho_\epsilon})^d}{\pi^d} \sum_{\vec{k}\in
\mathrm{N}_0^{d}\setminus\Lambda_N^q} \left(\prod_{\substack{1\leq i
\leq d \\ k_i =
0}}\overline{D}_{\widehat{\rho}_i}^{(\lambda)}\prod_{\substack{1\leq
j \leq d
\\ k_j \neq
0}} D_{\widehat{\rho}_j}^{(\lambda)} \right) \nonumber \\
&~~~ \times \left( \prod_{\substack{1\leq j\leq d\\ k_j\neq0}}
\frac{k_j^{1-\lambda}\Gamma(k_j+2\lambda)}{\Gamma(2\lambda)\Gamma(k_j+1)}
\right) \frac{1}{\rho^{||\vec{k}||_2}}.
\end{align}
For the product in the last line of the above formula, we obtain from
Lemma \ref{lemma:inequality ratio gamma} that
\begin{align}\label{eq:MaxErrStepII}
\prod_{\substack{1\leq j \leq d \\ k_j \neq 0}}
\frac{k_j^{1-\lambda}
\Gamma(k_j+2\lambda)}{\Gamma(2\lambda)\Gamma(k_j+1)} \leq
\prod_{\substack{1\leq j \leq d
\\ k_j \neq
0}} \frac{\Upsilon_{k_j}^{2\lambda,1} k_j^{\lambda}
}{\Gamma(2\lambda)} = \left( \prod_{\substack{1\leq j \leq d
\\ k_j \neq
0}} \frac{\Upsilon_{k_j}^{2\lambda,1} }{\Gamma(2\lambda)} \right)
\left( \prod_{\substack{1\leq j \leq d
\\ k_j \neq 0}} k_j \right)^{\lambda},
\end{align}
where $\Upsilon_k^{a,b}$ is defined in \eqref{def:upsilon}. A further appeal to the arithmetic geometric mean inequality shows that
\begin{align}\label{eq:AGMI}
\prod_{\substack{1\leq j \leq d \\ k_j\neq0}} k_j \leq \left(
\frac{k_1+\cdots+k_d}{d-\aleph(\mathbf{k})}
\right)^{d-\aleph(\mathbf{k})} = \left(
\frac{\|\vec{k}\|_1}{d-\aleph(\mathbf{k})}
\right)^{d-\aleph(\mathbf{k})}.
\end{align}
Thus, it follows from \eqref{eq:MaxErrStepI}, \eqref{eq:MaxErrStepII} and
\eqref{eq:AGMI} that
\begin{align}\label{eq:MaxErrStepIII}
\max_{\vec{x}\in \Omega_d} \left| f(\vec{x}) -
(\vec{\Pi}_N^{\lambda}f)(\vec{x}) \right|
&\leq
\frac{\max_{\vec{z}\in D_{h,\epsilon}}|f(\vec{z})| L(\mathcal{E}_{\rho_\epsilon})^d}{\pi^d}
\max_{\vec{k}\in \mathrm{N}_0^{d}\setminus\Lambda_N^q}
\left(\prod_{\substack{1\leq i \leq d \\ k_i =
0}}\overline{D}_{\widehat{\rho}_i}^{(\lambda)}\prod_{\substack{1\leq
j \leq d \\ k_j \neq 0}} \frac{\Upsilon_{k_j}^{2\lambda,1} D_{\widehat{\rho}_j}^{(\lambda)}}{\Gamma(2\lambda)} \right)        \nonumber \\
&~~~~~ \times \sum_{\vec{k}\in \mathrm{N}_0^{d}\setminus\Lambda_N^q}
\frac{\|\vec{k}\|_1^{\lambda d}}{\rho^{||\vec{k}||_2}},
\end{align}
where we have made use of the fact that $1 \leq d-\aleph(\mathbf{k}) \leq d $
for any $\vec{k}\in \mathrm{N}_0^{d}\setminus\Lambda_N^q$. The remaining task is then to estimate the two factors $\displaystyle \sum_{\vec{k}\in \mathrm{N}_0^{d}\setminus\Lambda_N^q}
\frac{\|\vec{k}\|_1^{\lambda d}}{\rho^{||\vec{k}||_2}}$ and
\newline
$\displaystyle \max_{\vec{k}\in \mathrm{N}_0^{d}\setminus\Lambda_N^q}
\left(\prod_{\substack{1\leq i \leq d \\ k_i =
0}}\overline{D}_{\widehat{\rho}_i}^{(\lambda)}\prod_{\substack{1\leq
j \leq d \\ k_j \neq 0}} \frac{\Upsilon_{k_j}^{2\lambda,1} D_{\widehat{\rho}_j}^{(\lambda)}}{\Gamma(2\lambda)} \right)$, respectively.

To estimate $\displaystyle \sum_{\vec{k}\in
\mathrm{N}_0^{d}\setminus\Lambda_N^q} \frac{\|\vec{k}\|_1^{\lambda
d}}{\rho^{||\vec{k}||_2}}$, we first observe from Lemma
\ref{lem:NormIneq} that $ \|\vec{k}\|_1 \leq \sqrt{d} \|\vec{k}\|_2$
and $ \|\vec{k}\|_q \leq \gamma \|\vec{k}\|_2$, where the constant
$\gamma$ depending on $d$ and $q$ is given in \eqref{eq:Gamma}.
Thus, it is readily seen that
\begin{align}
\sum_{\vec{k}\in \mathrm{N}_0^{d}\setminus\Lambda_N^q}
\frac{\|\vec{k}\|_1^{\lambda d}}{\rho^{||\vec{k}||_2}} &=
\sum_{\|\vec{k}\|_{q}>N} \frac{\|\vec{k}\|_1^{\lambda
d}}{\rho^{||\vec{k}||_2}} \leq d^{\frac{\lambda d}{2}}
\sum_{\|\vec{k}\|_{2}>\frac{N}{\gamma}} \frac{\|\vec{k}\|_2^{\lambda
d}}{\rho^{||\vec{k}||_2}}. \nonumber
\end{align}
Note that $\|\vec{k}\|_2^{\lambda d}/\rho^{||\vec{k}||_2}$ decreases
strictly for $\|\vec{k}\|_2> \lambda d/\ln\rho$, the last term can
be further bounded as
\begin{align}\label{eq:sumpart}
\sum_{\|\vec{k}\|_{2}>\frac{N}{\gamma}} \frac{\|\vec{k}\|_2^{\lambda
d}}{\rho^{||\vec{k}||_2}} &\leq
\idotsint\limits_{\substack{x_1^2+\cdots+x_d^2\geq\left(\frac{N}{\gamma}\right)^2 \\
x_1,\ldots,x_d\geq0}} \frac{(x_1^2+\cdots+x_d^2)^{\frac{\lambda
d}{2}}}{\rho^{\sqrt{x_1^2+\cdots+x_d^2}}} \mathrm{d}x_1\cdots
\mathrm{d}x_d \nonumber \\
&\leq C_d \int_{\frac{N}{\gamma}}^{\infty} \frac{t^{\lambda d + d -
1}}{\rho^t} \mathrm{d}t, \qquad N>\frac{\lambda \gamma d}{\ln \rho},
\end{align}
where we have evaluated the integral with the aid of spherical coordinates and
\begin{align} C_d = \left\{\begin{array}{cc}
                {\displaystyle 1 },      & \mbox{if $d=1$}, \\ [10pt]
                {\displaystyle
\frac{\left(\pi/2\right)^{\lfloor \frac{d}{2} \rfloor}}{(d-2)!!} },
& \mbox{if $d\geq2$}.
                  \end{array}
                  \right.
\end{align}
Next, by setting $a=\ln\rho$, $b=\lambda d+d-1$ and $M=N/\gamma$ in Lemma
\ref{lem:IntBound}, it follows that the last integral in \eqref{eq:sumpart} admits the following upper bound:
\begin{align}
\int_{\frac{N}{\gamma}}^{\infty} \frac{t^{\lambda d + d -
1}}{\rho^t} \mathrm{d}t &= \int_{\frac{N}{\gamma}}^{\infty}
e^{-t\ln\rho}
t^{\lambda d+d-1} \mathrm{d}t \nonumber
\\ &\leq  \rho^{-\frac{N}{\gamma}} \left( \sum_{j=1}^{m_{\lambda d+d-1}+1}
\frac{(\frac{N}{\gamma})^{\lambda d+d-j}}{(\ln\rho)^j}
\prod_{i=0}^{j-2}(\lambda d+d-i-1) \right), \nonumber
\end{align}
where recall that the constant $m_b$ is defined in \eqref{def:mb}. As a consequence, we finally arrive at
\begin{equation}\label{eq:est1}
\sum_{\vec{k}\in \mathrm{N}_0^{d}\setminus\Lambda_N^q}
\frac{\|\vec{k}\|_1^{\lambda d}}{\rho^{||\vec{k}||_2}} \leq
C_d \left( \sum_{j=1}^{m_{\lambda d+d-1}+1}
\frac{(\frac{N}{\gamma})^{\lambda d+d-j}}{(\ln\rho)^j}
\prod_{i=0}^{j-2}(\lambda d+d-i-1) \right) \rho^{-\frac{N}{\gamma}}.
\end{equation}

To find an upper bound of $\displaystyle \max_{\vec{k}\in \mathrm{N}_0^{d}\setminus\Lambda_N^q}
\left(\prod_{\substack{1\leq i \leq d \\ k_i =
0}}\overline{D}_{\widehat{\rho}_i}^{(\lambda)}\prod_{\substack{1\leq
j \leq d \\ k_j \neq 0}} \frac{\Upsilon_{k_j}^{2\lambda,1} D_{\widehat{\rho}_j}^{(\lambda)}}{\Gamma(2\lambda)} \right)$, on one hand, we observe from
\eqref{def:upsilon} that, for $\lambda>0$ and $k_j \geq 1$,
\begin{align}\label{eq:estUpsilon}
\Upsilon_{k_j}^{2\lambda,1}&=\exp\left(\frac{2\lambda-1}{2 k_j}+\frac{1}{12(k_j+2\lambda-1)}+\frac{(2\lambda-1)^2}{k_j}\right)
\nonumber \\
& \leq \exp\left(\max\left\{0,\frac{2\lambda-1}{2}\right\}+\frac{1}{24\lambda}+(2\lambda-1)^2\right).
\end{align}
On the other hand, in view of \eqref{eq:rhoHat}, we have
\begin{equation}\label{eq:esthatrhoj}
\widehat{\rho}_j \left\{
                   \begin{array}{ll}
                     =\sqrt{\epsilon}+\sqrt{1+\epsilon}, & \hbox{$k_j=0$,}
                     \\[1ex]
                     \geq \sqrt{\epsilon}+\sqrt{1+\epsilon}, & \hbox{$k_j \neq 0$.}
                   \end{array}
                 \right.
\end{equation}
As it can be easily seen from \eqref{eq:D} that $D_{\rho}^{(\lambda)}$ is a strictly decreasing function of $\rho > 1$ for fixed $\lambda>0$, thus, it follows from \eqref{eq:esthatrhoj} and \eqref{eq:upperboundQ} that, for $k_j \neq 0$,
\begin{align}\label{eq:estDhatrho}
D_{\widehat{\rho}_j}^{(\lambda)} \leq D_{\sqrt{\epsilon}+\sqrt{1+\epsilon}}^{(\lambda)}
&=\left\{
 \begin{array}{ll}
 \Gamma(\lambda)e^{\frac{1}{12}}\overline{D}_{\sqrt{\epsilon}+\sqrt{1+\epsilon}}^{(\lambda)}, & \hbox{$\lambda \geq 1$,}
 \\[1ex]
 \Gamma(\lambda)e^{\frac{1}{12}+\frac{1-\lambda}{2\lambda}}\overline{D}_{\sqrt{\epsilon}+\sqrt{1+\epsilon}}^{(\lambda)}, & \hbox{$0<\lambda<1$,}
                                                                                          \end{array}
                                                                                        \right.
\nonumber
\\
&\leq \Gamma(\lambda)e^{\frac{1}{12}+\max\left\{0,
\frac{1-\lambda}{2\lambda}\right\}}\overline{D}_{\sqrt{\epsilon}+\sqrt{1+\epsilon}}^{(\lambda)}.
\end{align}
Combining \eqref{eq:estUpsilon} and \eqref{eq:estDhatrho}, we obtain that
\begin{equation}\label{eq:est2}
\max_{\vec{k}\in \mathrm{N}_0^{d}\setminus\Lambda_N^q}
\left(\prod_{\substack{1\leq i \leq d \\ k_i =
0}}\overline{D}_{\widehat{\rho}_i}^{(\lambda)}\prod_{\substack{1\leq
j \leq d \\ k_j \neq 0}} \frac{\Upsilon_{k_j}^{2\lambda,1} D_{\widehat{\rho}_j}^{(\lambda)}}{\Gamma(2\lambda)} \right)
\leq \kappa^d,
\end{equation}
where
\begin{equation}
\kappa:=\max\left\{1, \frac{\Gamma{(\lambda)}}{\Gamma(2\lambda)}e^{2\max\left\{0,\frac{2\lambda-1}{2},\frac{1-\lambda}{2\lambda}\right\}+\frac{1}{24\lambda}+(2\lambda-1)^2+\frac{1}{12}} \right\}\overline{D}_{\sqrt{\epsilon}+\sqrt{1+\epsilon}}^{(\lambda)}.
\end{equation}
Substituting the estimates \eqref{eq:est1} and \eqref{eq:est2} into \eqref{eq:MaxErrStepIII} then gives us \eqref{eq:rateGegen} with
\begin{equation}\label{eq:kgegen}
\mathcal{K}=\max_{\vec{z}\in D_{h,\epsilon}}|f(\vec{z})| L(\mathcal{E}_{\rho_\epsilon})^d\left(\frac{\kappa}{\pi}\right)^dC_d \left( \sum_{j=1}^{m_{\lambda d+d-1}+1}
\frac{(\frac{N}{\gamma})^{\lambda d+d-j}}{(\ln\rho)^j}
\prod_{i=0}^{j-2}(\lambda d+d-i-1) \right).
\end{equation}

For the multivariate Chebyshev approximation of the first kind, i.e., $\lambda=0$, we note from \eqref{eq:MulChebT} and \eqref{eq:chebybound2} that
\begin{align}
\max_{\vec{x}\in \Omega_d} \left| f(\vec{x}) - (\vec{\Pi}_N^{0}f)(\vec{x}) \right |
&\leq \sum_{\vec{k}\in \mathrm{N}_0^{d}\setminus\Lambda_N^q} |
a_\vec{k}^T| \leq 2^d \max_{\vec{z}\in D_{h,0}}|f(\vec{z})|\sum_{\vec{k}\in
\mathrm{N}_0^{d}\setminus\Lambda_N^q}\frac{1}{\rho^{||\vec{k}||_2}}.
\nonumber
\end{align}
Similar to the derivation of \eqref{eq:est1}, it is readily seen that
\begin{equation}
\sum_{\vec{k}\in
\mathrm{N}_0^{d}\setminus\Lambda_N^q}\frac{1}{\rho^{||\vec{k}||_2}}\leq C_d \sum_{j=1}^{d}
\frac{(\frac{N}{\gamma})^{d-j}}{(\ln\rho)^j}\prod_{i=0}^{j-2}(d-i-1) \rho^{-\frac{N}{\gamma}}, \qquad N>0.
\end{equation}
Hence, a combination of the above two inequalities shows that, for $\lambda=0$, we still have \eqref{eq:rateGegen} but with the constant $\mathcal{K}$ replaced by
\begin{equation}\label{eq:kcheby}
\mathcal{K}= \max_{\vec{z}\in D_{h,0}}|f(\vec{z})| 2^d C_d \sum_{j=1}^{d}
\frac{(\frac{N}{\gamma})^{d-j}}{(\ln\rho)^j}\prod_{i=0}^{j-2}(d-i-1).
\end{equation}
This completes the proof of Theorem \ref{thm:BallSet}. \qed

\subsection{Numerical experiments and further discussions}\label{sec:NumerExp}
From Theorem \ref{thm:BallSet}, it is readily seen that the error
bound of the multivariate Gegenbauer approximation is
$\mathcal{O}(\rho^{-N})$ for $q \geq 2$. If $0<q<2$, the error bound
is $\mathcal{O}(\rho^{-\frac{N}{\gamma}})$, which deteriorate
gradually as $q\rightarrow0_+$. Our results match numerical
experiments very well for isotropic functions, as illustrated in
what follows.

We again consider the functions given in \eqref{def:f1} and
\eqref{def:f2}, respectively. Note that both functions satisfy
Assumption \ref{assump2} with $h^2=0.5$ and $\rho \approx
1.931851652578136$ for the former function, and $h^2=1$ and $\rho
\approx 2.414213562373095$ for the latter function. We then use
multivariate Legendre expansion (i.e., $\lambda=\frac{1}{2}$) on
$\Lambda_N^q$ to approximate these functions. In our computations,
the maximum error, i.e., $ \max_{\vec{x}\in \Omega_2}|f(\vec{x}) -
(\vec{\Pi}_N^{\frac{1}{2}}f)(\vec{x})| $, is measured by using a
finer grid in $\Omega_2$. The results are shown in Figure
\ref{fig:MaxError1} as a function of $N$ for three different
moderate values of $q$. For each $q$, we clearly observe that the
decay rate of the maximum error is consistent with the one
predicated in Theorem \ref{thm:BallSet}.

\begin{figure}[ht]
\centering
\includegraphics[width=7.4cm,height=6.8cm]{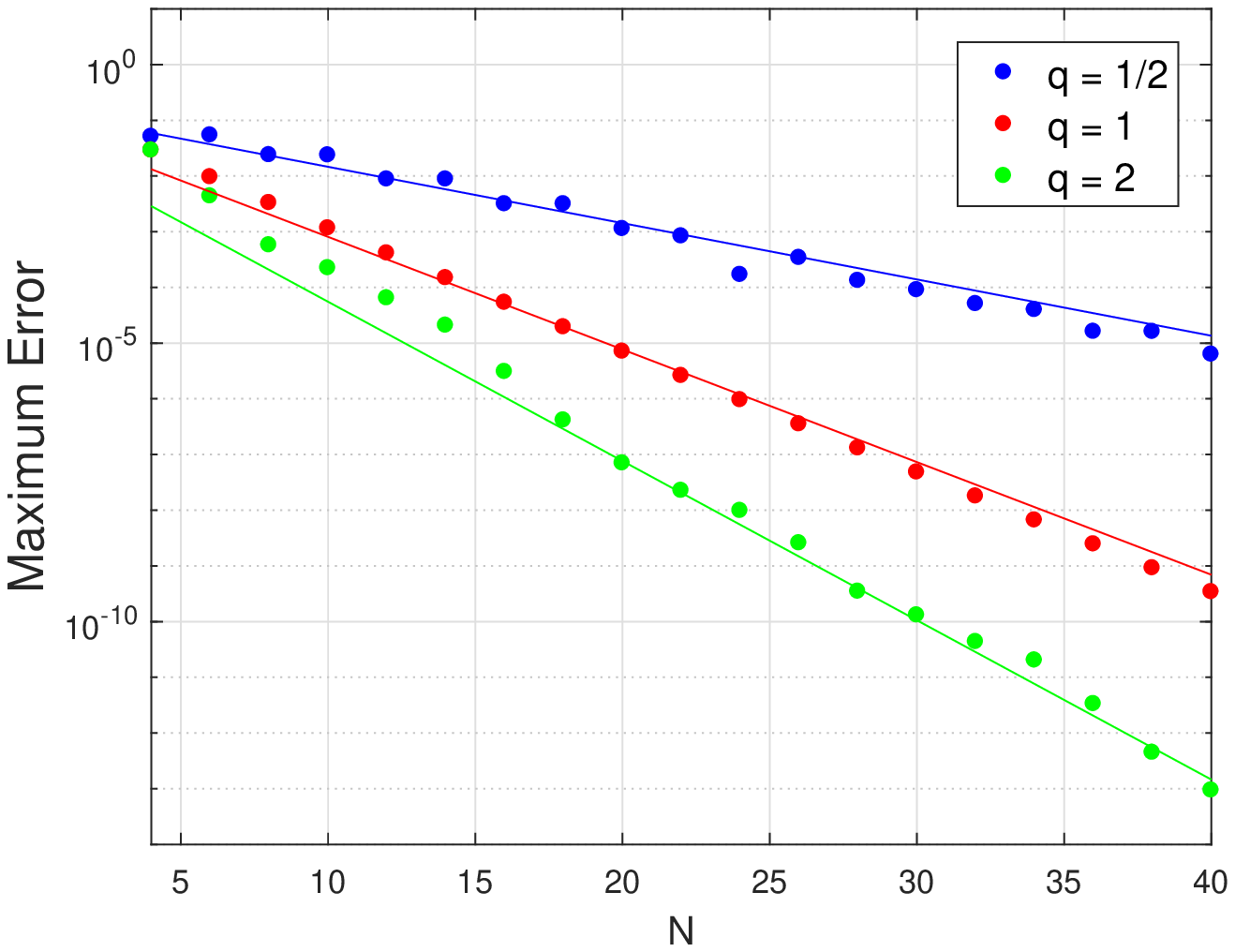}~
\includegraphics[width=7.4cm,height=6.8cm]{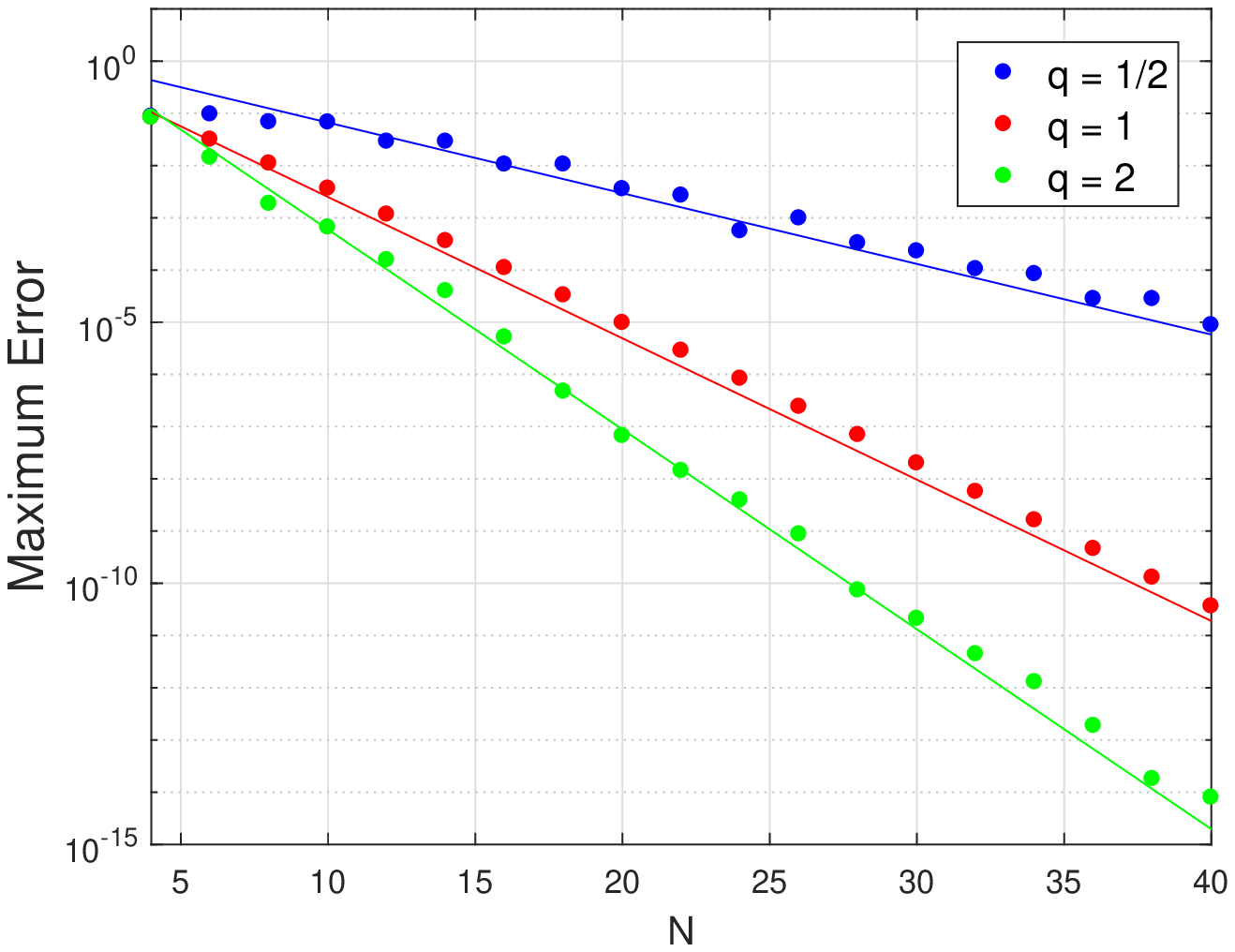}
\caption{Maximum errors of multivariate Legendre approximation for the functions \eqref{def:f1} (left)
and \eqref{def:f2} (right) as a function of $N$ in $\Omega_2$ with $q=\frac{1}{2},1,2$. Straight lines exhibit the convergence rates
predicted by Theorem \ref{thm:BallSet}.} \label{fig:MaxError1}
\end{figure}

A further numerical illustration of our results are shown in Figure
\ref{fig:MaxError2}, where we plot the maximum error of the
multivariate Legendre approximation for the function \eqref{def:f1}
with several smaller and larger values of $q$. Again, the results of
numerical experiments fit the predicted error bound in a
satisfactory way.

\begin{figure}[ht]
\centering
\includegraphics[width=7.4cm,height=6.8cm]{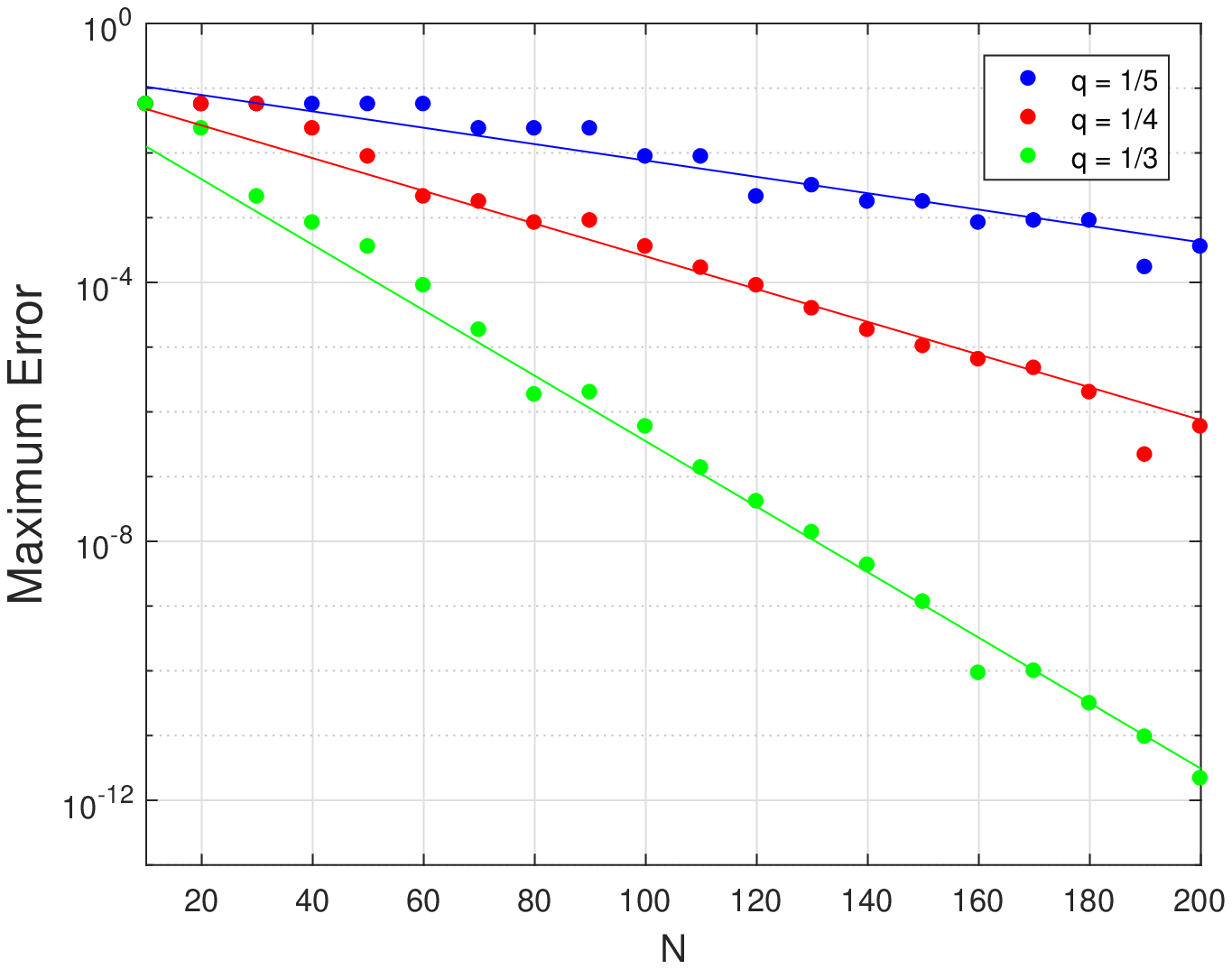}~
\includegraphics[width=7.4cm,height=6.8cm]{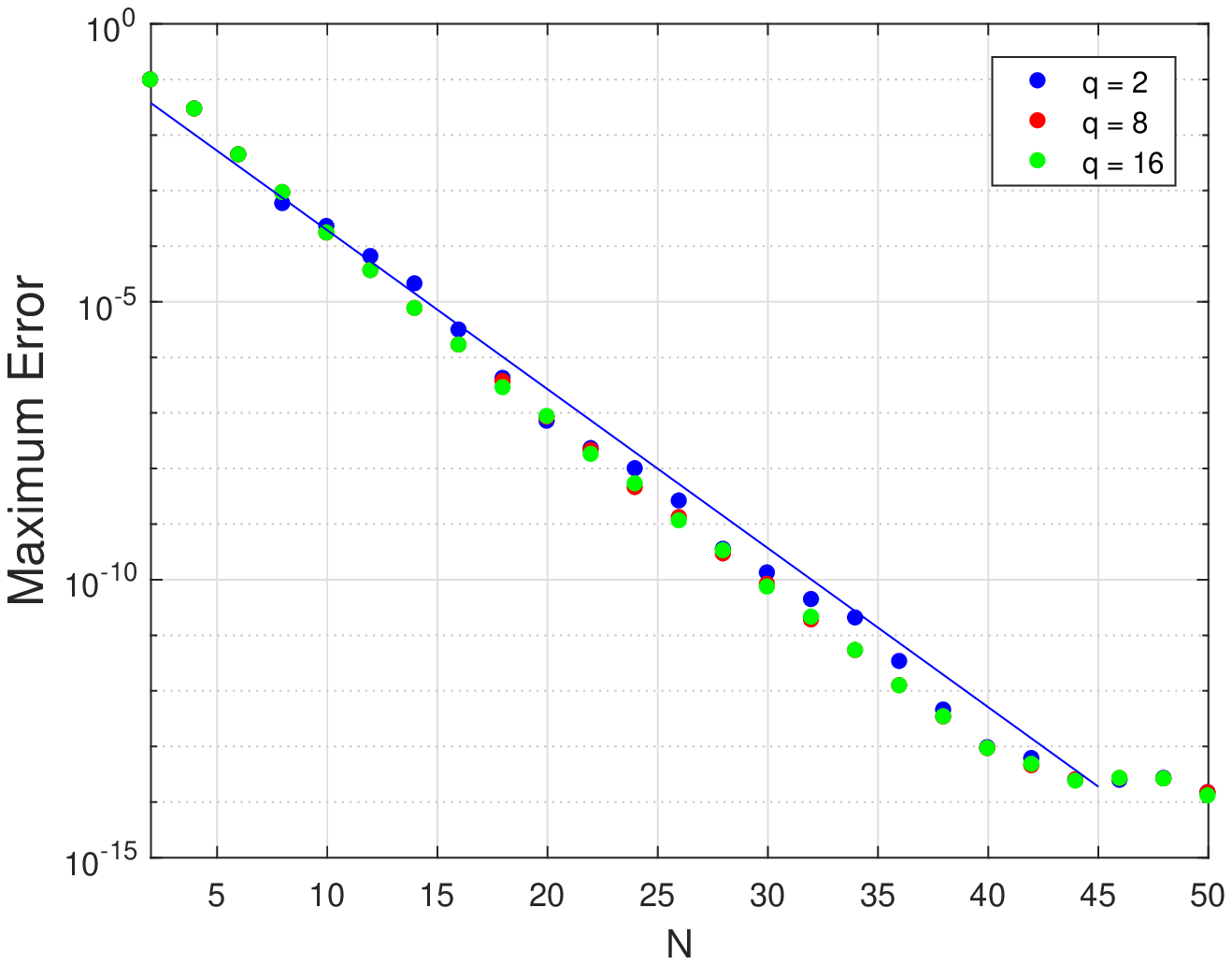}
\caption{Maximum errors of multivariate Legendre approximation of \eqref{def:f1} as a
function of $N$ in $\Omega_2$, for
$q=\frac{1}{5},\frac{1}{4},\frac{1}{3}$ (left) and $q=2,8,16$
(right). Straight lines exhibit the convergence rates predicted by
Theorem \ref{thm:BallSet}.} \label{fig:MaxError2}
\end{figure}

Finally, it is worthwhile to point out that a direct comparison of
the rates of convergence of $(\vec{\Pi}_N^{\lambda}f)(\vec{x})$
established in Theorem \ref{thm:BallSet} for different $q$ is not
fair, since the number of terms in
$(\vec{\Pi}_N^{\lambda}f)(\vec{x})$ also depends on $q$. Indeed, for
large $N$, we could estimate this number denoted by $N_q$ via a
continuum approximation and obtain that
\begin{equation}\label{def:nq}
 N_{q} \approx N^d V_q,
\end{equation}
where
$
V_q = \frac{\Gamma(\frac{1}{q}+1)^d}{\Gamma(\frac{d}{q}+1)}, q>0
$,
is the volume of the unit $\ell^q$ ball restricted to the positive
orthant (cf. \cite{wang2018volume} and the references therein).
Thus, to evaluate the efficiency of multivariate Gegenbauer
approximation with two different $\ell^q$ index sets, it is much
more reasonable to compare $N_q$ under the condition that the same
convergence rate is achieved. Assume that the predicted rate of
convergence of $(\vec{\Pi}_N^{\lambda}f)(\vec{x})$ given in
\eqref{eq:rateGegen} is
sharp, we 
compare two different $\ell^q$ ball index sets: (i) $q\neq2$ and
(ii) $q=2$. If $q\in(0,2)$, to achieve the same convergence rate,
say, $\mathcal{O}(\rho^{-N})$, it follows from \eqref{eq:rateGegen}
that the number of terms in $(\vec{\Pi}_N^{\lambda}f)(\vec{x})$
corresponding to the set (i) is equal to the number of the
multi-indices satisfying $\|\vec{k}\|_q \leq
d^{\frac{1}{q}-\frac{1}{2}}N$, while that corresponding to the set
(ii) is equal to the number of the multi-indices satisfying
$\|\vec{k}\|_2 \leq N$. By \eqref{def:nq}, it is easily seen that
the ratio of these two numbers admits the following estimate:
\begin{align}\label{eq:Ratio}
 \frac{(d^{\frac{1}{q}-\frac{1}{2}}N)^d
\frac{\Gamma(\frac{1}{q}+1)^d}{\Gamma(\frac{d}{q}+1)}}{N^d
\frac{\Gamma(\frac{3}{2})^d}{\Gamma(\frac{d}{2}+1)}} =
d^{\frac{d}{q}-\frac{d}{2}}
\left(\frac{\Gamma(\frac{1}{q}+1)}{\Gamma(\frac{3}{2})}\right)^d
\frac{\Gamma(\frac{d}{2}+1)}{\Gamma(\frac{d}{q}+1)}.
\end{align}
Numerical experiments show that the above ratio is always greater
than one for $d\geq2$ and grows exponentially fast as $d$ increases.
This means that the index set induced from an $\ell^q$ ball with
$q\in(0,2)$ may be less efficient compared with $q=2$. If $q>2$,
from \eqref{eq:rateGegen} we see that the predicted rate of
convergence is always the same, and one only needs to compare $N_q$
and $N_2$. By \eqref{def:nq}, it is easily seen that $N_q$ is
strictly increasing as $q$ increases and thus $N_q
> N_2$ for $q>2$. As a consequence, we conclude that the multivariate Gegenbauer
approximation based on the Euclidean degree of multivariate
polynomial, i.e., on the index set $\Lambda_N^q$ with $q=2$,
provides an optimal choice among the multivariate Gegenbauer
approximation with an $\ell^q$ ball index set, if the convergence
rate of $(\vec{\Pi}_N^{\lambda}f)(\vec{x})$ established in Theorem
\ref{thm:BallSet} is sharp. For the multivariate Chebyshev
approximation with an $\ell^q$ ball index set and $q=1,2,\infty$,
this viewpoint was first proposed by Trefethen in
\cite{trefethen2017cubature}. Here, we have extended his conclusion
to a general setting.


\section{Multivariate Gegenbauer approximation of functions with finite regularity}\label{sec:diff}
In this section, we give an attempt to consider multivariate
Gegenbauer approximation of functions with finite regularity. More
precisely, we restrict our discussion on the Sobolev space
$\mathcal{H}^{\vec{m}}(\Omega_d)$ defined by
\begin{align}
\mathcal{H}^{\vec{m}}(\Omega_d): = \big\{f~\big|~
\partial^{\vec{n}} f \in C(\Omega_d), ~\vec{0}\leq \vec{n} \leq
\vec{m}-1,~ \partial^{\vec{m}} f\in L^2(\Omega_d)  \big\},
\end{align}
where $\vec{m}=(m_1,\ldots,m_d)$ is a
fixed multi-index with $m_j \geq 1$, $j=1,\ldots,d$,
and the mixed derivatives of $f$ defined in \eqref{def:derivative} are understood in the
distributional sense. As in the case of analytic functions, we start with the estimate of the expansion coefficients.

\begin{theorem}\label{thm:FRBound}
Suppose $\lambda>0$ and $f \in \mathcal{H}^{\vec{m}}(\Omega_d)$, the
multivariate Gegenbauer coefficients of $f$ satisfy
\begin{align}
|a_{\mathbf{k}}| \leq \frac{V_{\vec{k},\vec{m}}}{h_{\vec{k}}^{(\lambda)}}
\prod_{j=1}^{d} \left[\sqrt{h_{k_j-\min\{k_j,m_j\}}^{\lambda+\min\{k_j,m_j\}}}\prod_{s=0}^{\min\{k_j,m_j\}-1}
\frac{2(\lambda+s)}{(k_j-s)(k_j+2\lambda+s)} \right],
\end{align}
where
\begin{equation}\label{def:Vkm}
V_{\vec{k},\vec{m}}=\sqrt{\int_{\Omega_d} |(\partial^{\min\{\vec{k},\vec{m}\}}
f)(\vec{x})|^2 \omega_{\lambda+\min\{\vec{k},\vec{m}\}}(\vec{x})
\ud \vec{x}}
\end{equation}
with $\min\{\vec{k},\vec{m}\}$ defined in \eqref{def:minindices}, and the factor depending on $s$ is taken to be 1 if $s<0$.
\end{theorem}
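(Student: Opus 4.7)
The plan is to reduce to the one-dimensional case via Fubini's theorem and then iteratively integrate by parts in each variable, using the self-adjoint (Sturm--Liouville) form of the Gegenbauer differential equation together with the derivative identity $\tfrac{d}{dx}C_n^{(\lambda)}(x) = 2\lambda C_{n-1}^{(\lambda+1)}(x)$. Combining these gives the key identity
\begin{equation*}
(1-x^2)^{\lambda-1/2} C_n^{(\lambda)}(x) = -\frac{2\lambda}{n(n+2\lambda)} \frac{d}{dx}\!\left[(1-x^2)^{\lambda+1/2} C_{n-1}^{(\lambda+1)}(x)\right],
\end{equation*}
which is valid for $\lambda>0$ and $n\geq 1$. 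This identity is exactly what makes the integration by parts produce a shifted Gegenbauer polynomial with parameter $\lambda+1$ at one lower degree, while transferring one derivative onto $f$.

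First I would start from \eqref{def:akintegral}, write it as an iterated integral, and, in each variable $x_j$ separately, apply the identity above $\min\{k_j,m_j\}$ times. At the $s$-th step in the $j$-th direction the prefactor $-\frac{2(\lambda+s)}{(k_j-s)(k_j+2\lambda+s)}$ is extracted, the weight becomes $(1-x_j^2)^{\lambda+s+1/2}$, the polynomial becomes $C_{k_j-s-1}^{(\lambda+s+1)}(x_j)$, and a partial derivative $\partial_{x_j}$ lands on $f$. The boundary contributions all vanish because the weight $(1-x_j^2)^{\lambda+s+1/2}$ carries a positive exponent at $\pm 1$. If $k_j\geq m_j$, I would stop after $m_j$ iterations; if $k_j<m_j$, I would stop after $k_j$ iterations, since the denominator $k_j-s$ would otherwise vanish, and at that point the remaining polynomial is $C_0^{(\lambda+k_j)}\equiv 1$, which matches the formula via $h_0^{(\lambda+k_j)}$. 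The unified bookkeeping is encoded by using $\min\{k_j,m_j\}$ and the definition \eqref{def:minindices}.

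After performing the integration by parts in every coordinate, one is left with
\begin{equation*}
a_{\vec{k}} = \frac{1}{h_{\vec{k}}^{(\lambda)}} \left(\prod_{j=1}^{d}\prod_{s=0}^{\min\{k_j,m_j\}-1} \frac{2(\lambda+s)}{(k_j-s)(k_j+2\lambda+s)}\right) \int_{\Omega_d} (\partial^{\min\{\vec{k},\vec{m}\}} f)(\vec{x})\, C_{\vec{k}-\min\{\vec{k},\vec{m}\}}^{(\lambda+\min\{\vec{k},\vec{m}\})}(\vec{x})\, \omega_{\lambda+\min\{\vec{k},\vec{m}\}}(\vec{x})\,\ud\vec{x},
\end{equation*}
up to a sign. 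The Cauchy--Schwarz inequality applied with the weight $\omega_{\lambda+\min\{\vec{k},\vec{m}\}}(\vec{x})$ then bounds this integral by $V_{\vec{k},\vec{m}}$ (as defined in \eqref{def:Vkm}) times $\sqrt{\int_{\Omega_d} |C_{\vec{k}-\min\{\vec{k},\vec{m}\}}^{(\lambda+\min\{\vec{k},\vec{m}\})}(\vec{x})|^2 \omega_{\lambda+\min\{\vec{k},\vec{m}\}}(\vec{x})\,\ud\vec{x}}$. Since the Gegenbauer polynomials are orthogonal in each variable and the weight tensorizes, this second factor equals $\prod_{j=1}^{d}\sqrt{h_{k_j-\min\{k_j,m_j\}}^{(\lambda+\min\{k_j,m_j\})}}$, yielding the claim.

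The technical point I expect to require the most care is justifying the iterated integration by parts in the Sobolev setting, where $\partial^{\vec{m}}f$ is only a distributional derivative and one must approximate $f$ by smooth functions to carry out the boundary-term argument; this is where the condition $\partial^{\vec{n}}f\in C(\Omega_d)$ for $\vec{n}\leq\vec{m}-1$ plays its role. Apart from this, the proof is a direct but careful bookkeeping argument: correctly tracking the prefactor, the shifted Gegenbauer parameter, and the two regimes $k_j\geq m_j$ versus $k_j<m_j$ in a single unified formula.
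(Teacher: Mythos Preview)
Your proposal is correct and follows essentially the same route as the paper: the same key identity $\omega_\lambda(x)C_n^{(\lambda)}(x)=-\frac{2\lambda}{n(n+2\lambda)}\frac{d}{dx}\bigl[\omega_{\lambda+1}(x)C_{n-1}^{(\lambda+1)}(x)\bigr]$ is applied $\min\{k_j,m_j\}$ times in each coordinate, followed by Cauchy--Schwarz against the shifted weight to pick up the factor $\prod_j\sqrt{h_{k_j-\min\{k_j,m_j\}}^{(\lambda+\min\{k_j,m_j\})}}$. Your additional remarks on the vanishing of boundary terms and the density/approximation argument needed to justify the integration by parts in $\mathcal{H}^{\vec m}(\Omega_d)$ are more explicit than what the paper records, but the underlying argument is the same.
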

\begin{proof}
From \cite[Equation (18.9.20)]{DLMF}, we know that
\begin{align}
\omega_{\lambda}(x) C_n^{(\lambda)}(x) =
-\frac{2\lambda}{n(n+2\lambda)} \frac{\mathrm{d}}{\mathrm{d}x}\left[
\omega_{\lambda+1}(x) C_{n-1}^{(\lambda+1)}(x) \right],\qquad n\geq1.
\nonumber
\end{align}
Substituting this formula into \eqref{def:akintegral} and integrating by
parts $\min\{k_j,m_j\}$ times with respect to each $x_j$, we obtain
\begin{multline}\label{eq:FRStep1}
a_{\vec{k}} = \frac{1}{h_{\vec{k}}^{(\lambda)}} \prod_{j=1}^{d}
\left[\prod_{s=0}^{\min\{k_j,m_j\}-1}
\frac{2(\lambda+s)}{(k_j-s)(k_j+2\lambda+s)} \right] \int_{\Omega_d}
(\partial^{\min\{\vec{k},\vec{m}\}} f)(\vec{x}) \omega_{\lambda+\min\{\vec{k},\vec{m}\}}(\vec{x})
\\ \times
C_{\vec{k}-\min\{\vec{k},\vec{m}\}}^{\lambda+\min\{\vec{k},\vec{m}\}}(\vec{x}) \mathrm{d}\vec{x},
\end{multline}
where $C_{\vec{k}-\min\{\vec{k},\vec{m}\}}^{\lambda+\min\{\vec{k},\vec{m}\}}(\vec{x}) =
\prod_{j=1}^{d}C_{k_j-\min\{k_j,m_j\}}^{\lambda+\min\{k_j,m_j\}}(x_j)$.
By Cauchy-Schwarz
inequality, it is then readily seen from \eqref{eq:FRStep1} that
\begin{align}\label{eq:FRStep2}
|a_{\vec{k}}| &\leq  \frac{V_{\vec{k},\vec{m}}}{h_{\vec{k}}^{(\lambda)}} \prod_{j=1}^{d}
\left[\prod_{s=0}^{\min\{k_j,m_j\}-1}
\frac{2(\lambda+s)}{(k_j-s)(k_j+2\lambda+s)} \right]
\nonumber \\
&~~~ \times \sqrt{\int_{\Omega_d}
\left(C_{\vec{k}-\min\{\vec{k},\vec{m}\}}^{\lambda+\min\{\vec{k},\vec{m}\}}(\vec{x})\right)^2
\omega_{\lambda+\min\{\vec{k},\vec{m}\}}(\vec{x}) \mathrm{d}\vec{x}} \nonumber \\
&= \frac{V_{\vec{k},\vec{m}}}{h_{\vec{k}}^{(\lambda)}}
\prod_{j=1}^{d} \left[\sqrt{h_{k_j-\min\{k_j,m_j\}}^{\lambda+\min\{k_j,m_j\}}}\prod_{s=0}^{\min\{k_j,m_j\}-1}
\frac{2(\lambda+s)}{(k_j-s)(k_j+2\lambda+s)} \right],  \nonumber
\end{align}
where $V_{\vec{k},\vec{m}}$ is defined in \eqref{def:Vkm}.
This completes the proof of Theorem \ref{thm:FRBound}.
\end{proof}

As an application of the above theorem, we are able to establish an
$L^{\infty}$ error estimate of the multivariate Gegenbauer
approximation of functions with finite regularity. For simplicity of
the presentation, we restrict our attention to the ball index set
$\Lambda_N^{\infty}$, i.e., on the full gird.
\begin{theorem}
Let $\lambda>0$ and $\vec{m}=(m_1,\ldots,m_d)$ be a fixed
multi-index with $m_j>\lambda+1$ for $j=1,\ldots,d$. Suppose $f \in
\mathcal{H}^{\vec{m}}(\Omega_d)$, then we have
\begin{equation}\label{eq:estFR}
\max_{\vec{x}\in \Omega_d} \left| f(\vec{x}) -
 \sum_{\vec{k}\in \Lambda_N^\infty }
a_\vec{k} C_{\vec{k}}^{(\lambda)}(\mathbf{x})\right| \leq
\widehat{\mathcal{K}}N^{-\min_{j=1,\ldots,d}\{m_j-\lambda-1\}}, \qquad N\gg1,
\end{equation}
for some constant $\widehat{\mathcal{K}}$ independent of $N$ and $\vec{m}$.
\end{theorem}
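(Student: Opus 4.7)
The plan is to reduce the uniform error to a tail sum of Gegenbauer coefficients times $C_{\vec{k}}^{(\lambda)}(\vec{1})$, apply the coefficient bound of Theorem \ref{thm:FRBound}, and then estimate the resulting tail by a multivariate $p$-series argument. Concretely, using the triangle inequality together with \eqref{eq:GegenBound}, I would first dominate the error as
\begin{equation*}
\max_{\vec{x}\in\Omega_d}\left|f(\vec{x})-\sum_{\vec{k}\in\Lambda_N^{\infty}}a_{\vec{k}}C_{\vec{k}}^{(\lambda)}(\vec{x})\right|\leq\sum_{\vec{k}\notin\Lambda_N^{\infty}}|a_{\vec{k}}|\,C_{\vec{k}}^{(\lambda)}(\vec{1}).
\end{equation*}

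Next, for the leading modes (those $\vec{k}$ with $k_j\geq m_j$ for every $j$), the bound in Theorem \ref{thm:FRBound} simplifies because $\min\{k_j,m_j\}=m_j$; I would then insert the explicit formulas \eqref{eq:normalization constant} and \eqref{eq:normalization gegenbauer} and invoke Lemma \ref{lemma:inequality ratio gamma} on the resulting ratios of gamma functions. A coordinate-by-coordinate calculation (each factor $\sqrt{h_{k_j-m_j}^{(\lambda+m_j)}}\sim k_j^{\lambda+m_j-1}$, each inner product $\sim k_j^{-2m_j}$, each $h_{k_j}^{(\lambda)}\sim k_j^{2\lambda-2}$ and each $C_{k_j}^{(\lambda)}(1)\sim k_j^{2\lambda-1}$) yields the clean asymptotic
\begin{equation*}
|a_{\vec{k}}|\,C_{\vec{k}}^{(\lambda)}(\vec{1})\ \leq\ \widetilde{C}_{\lambda,\vec{m}}\,V_{\vec{k},\vec{m}}\prod_{j=1}^{d}k_j^{\lambda-m_j}.
\end{equation*}
The quantity $V_{\vec{k},\vec{m}}$ from \eqref{def:Vkm} is bounded uniformly in $\vec{k}$ by the $\mathcal{H}^{\vec{m}}(\Omega_d)$-norm of $f$, because $\min\{\vec{k},\vec{m}\}\leq\vec{m}$ componentwise and $\omega_{\lambda+\min\{\vec{k},\vec{m}\}}\leq\omega_{\lambda}$ on $\Omega_d$.

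To sum the tail, I would use $\{\vec{k}:\max_j k_j>N\}=\bigcup_{j=1}^{d}\{\vec{k}:k_j>N\}$ and a union bound to rewrite the sum as $\sum_{j=1}^{d}\sum_{k_j>N}\sum_{k_i\in\mathbb{N}_0,\,i\neq j}$ of the above integrand. For $N\geq\max_j m_j$ the ``active'' coordinate $j$ automatically satisfies $k_j>N\geq m_j$, contributing the sharp factor $k_j^{\lambda-m_j}$. For each remaining coordinate $i\neq j$ I would split the sum at $k_i=m_i$: the tail $\sum_{k_i\geq m_i}k_i^{\lambda-m_i}$ is a convergent $p$-series precisely because the hypothesis $m_i>\lambda+1$ forces $\lambda-m_i<-1$, while the head $k_i<m_i$ contains only finitely many terms and contributes a bounded constant. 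Putting everything together and summing in $k_j$ produces $\sum_{k_j>N}k_j^{\lambda-m_j}\lesssim N^{-(m_j-\lambda-1)}$, and taking the slowest-decaying index $j$ yields the advertised rate $N^{-\min_j(m_j-\lambda-1)}$.

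The main technical obstacle is the bookkeeping for the ``mixed'' modes, i.e., those $\vec{k}\notin\Lambda_N^\infty$ for which at least one component $k_i<m_i$. For such components the product in Theorem \ref{thm:FRBound} uses $\min\{k_i,m_i\}=k_i$ rather than $m_i$, so the per-coordinate asymptotic $k_i^{\lambda-m_i}$ does not apply directly and the weight exponent in $V_{\vec{k},\vec{m}}$ shifts accordingly. One must check that for each configuration of small coordinates (there are at most $\prod_j m_j$ such patterns) the resulting bound is still uniform in the remaining variables and that the active coordinate $j$ continues to supply the decay $k_j^{\lambda-m_j}$. Absorbing the bookkeeping constants, together with the prefactor $\Upsilon_{k}^{2\lambda,1}$ from Lemma \ref{lemma:inequality ratio gamma} (which is uniformly bounded for $k\geq 1$, $\lambda>0$) and the uniform Sobolev control of $V_{\vec{k},\vec{m}}$, into a single $N$-independent constant $\widehat{\mathcal{K}}$ finally yields \eqref{eq:estFR}.
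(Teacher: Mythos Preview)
Your proposal is correct and follows essentially the same approach as the paper: both bound the uniform error by $\sum_{\vec{k}\notin\Lambda_N^\infty}|a_{\vec{k}}|C_{\vec{k}}^{(\lambda)}(\vec{1})$, combine Theorem~\ref{thm:FRBound} with \eqref{eq:normalization constant}, \eqref{eq:normalization gegenbauer} and Lemma~\ref{lemma:inequality ratio gamma} to obtain the per-coordinate decay $k_j^{\lambda-\min\{k_j,m_j\}}$, and then sum the tail to extract $N^{-(m_j-\lambda-1)}$ from the active coordinate while the inactive coordinates contribute convergent sums thanks to $m_i>\lambda+1$. The only cosmetic difference is that the paper partitions the tail by $\Xi(\vec{k})=\#\{i:k_i>N\}$ and handles $\Xi=1$ explicitly (claiming $\Xi>1$ decays faster), whereas you use the union bound $\{\|\vec{k}\|_\infty>N\}\subset\bigcup_j\{k_j>N\}$ and let the inactive coordinates run over all of $\mathbb{N}_0$; your organization is slightly cleaner since it absorbs the $\Xi>1$ contributions automatically.
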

\begin{proof}
If $f \in \mathcal{H}^{\vec{m}}(\Omega_d)$, one can check from
Theorem \ref{thm:FRBound}, \eqref{eq:normalization constant} and Lemma \ref{lemma:inequality
ratio gamma} that
\[
|a_{\vec{k}}| \leq \widehat{\mathcal{K}}_0 \prod_{j=1}^d k_j^{-\lambda-\min\{k_j,m_j\}+1},
\]
for some constant $\widehat{\mathcal{K}}_0$ independent of $\vec{k}$, where $k_j^{-\lambda-\min\{k_j,m_j\}+1}$ should be replaced by 1
if $k_j=0$. Also note that $C_k^{(\lambda)}(1)\leq C_\lambda k^{2\lambda-1}$ for some constant $C_\lambda$ independent of $k$ (see \eqref{eq:normalization gegenbauer} and
Lemma \ref{lemma:inequality ratio gamma}), it is then readily seen that
\begin{align}\label{eq:S1}
 \left| f(\vec{x}) -
 \sum_{\vec{k}\in \Lambda_N^\infty }
a_\vec{k} C_{\vec{k}}^{(\lambda)}(\mathbf{x})\right| & \leq
\sum_{\vec{k}\in \mathrm{N}_0^{d}\setminus\Lambda_N^{\infty}} |
a_\vec{k}| C_{\vec{k}}^{(\lambda)}(\mathbf{1}) \leq
\widehat{\mathcal{K}}_0C_\lambda^d \sum_{\vec{k}\in
\mathrm{N}_0^{d}\setminus\Lambda_N^{\infty}}
\prod_{j=1}^d k_j^{\lambda-\min\{k_j,m_j\}}  \nonumber \\
&= \widehat{\mathcal{K}}_0C_\lambda^d \sum_{i=1}^d
\sum_{\Xi(\mathbf{k})=i} \prod_{j=1}^d
k_j^{\lambda-\min\{k_j,m_j\}},
\end{align}
where $\Xi(\mathbf{k}):=\#\{i:k_i>N\}$.

If $\Xi(\mathbf{k})=1$, it is easy
to see that
$
\Xi(\mathbf{k})=1 \Leftrightarrow \vec{k}\in \bigcup_{l=1}^{d}
\Lambda_N^l,
$
where
$\Lambda_N^l=\{\vec{k}\in\mathbb{N}_0^d~|~k_l>N,~k_j\leq
N, j\neq l \}$. The estimate of the sum in \eqref{eq:S1} over the multi-index set $\Xi(\mathbf{k})=1$ then reduces to the estimate of the sum over each $\Lambda_N^l$. A straightforward calculation shows that, for $l=1,\ldots,d$ and $N\gg1$,
\begin{align}\label{eq:FiniteS2}
\sum_{\vec{k}\in \Lambda_N^{l}}
\prod_{j=1}^dk_j^{\lambda-\min\{k_j,m_j\}} &= \left( \prod_{j=1, j
\neq l}^{d} \sum_{k_{j}=0}^{N} k_{j}^{\lambda-\min\{k_{j},m_{j}\}})
\right) \left( \sum_{k_{l}>N}^{\infty}
k_{l}^{\lambda-\min\{k_{l},m_{l}\}} \right)  \nonumber \\
&= \left( \prod_{j=1, j \neq l}^{d} \sum_{k_{j}=0}^{N}
k_{j}^{\lambda-\min\{k_{j},m_{j}\}}) \right) \left(
\sum_{k_{l}>N}^{\infty} k_{l}^{\lambda-m_l} \right) \nonumber \\
&\leq \widehat{\mathcal{K}}_l N^{\lambda-m_l+1},
\end{align}
for some constant $\widehat{\mathcal{K}}_l$ independent of $N$ and $\vec{m}$.
Therefore, we conclude that the sum over $\Xi(\mathbf{k})=1$ can
be bounded by $\widehat{\mathcal{K}}N^{-\min_{j=1,\ldots,d}\{m_j-\lambda-1\}}$
for some constant $\widehat{\mathcal{K}}$ independent of $N$ and $\vec{m}$.

If $\Xi(\mathbf{k})=i>1$, one can show in a similar manner that the sum in \eqref{eq:S1} over this multi-index set will contribute to a worse decay than $O(N^{-\min_{j=1,\ldots,d}\{m_j-\lambda-1\}})$, which finally leads to the estimate \eqref{eq:estFR}.
\end{proof}


\section{An extension to polynomial approximation of parameterized PDEs}
\label{sec:appinPDE}

In this section, we will apply an extension of Theorem
\ref{thm:akbound1} with emphasis on tensorized Legendre expansions
to the polynomial approximation for parameterized PDEs.

The extension deals with a function $f(x,\mathbf{y})$ defined in a
bounded regular domain $D \subset \mathbb{R}^n $ with the parameters
$\mathbf{y} \in \Omega_d$. Suppose that $f\in L^{\infty}(\Omega_d,
V, \Pi_{i=1}^d \ud x_i)$, where $V=V(D)$ is certain Banach space
equipped with the norm $||\cdot||_{V(D)}$. Then, $f$ admits the
following tensorized Legendre expansions
\begin{equation}\label{eq:fxyexp}
f(x,\mathbf{y})= \sum_{\vec{k}\in\mathbb{N}_0^{d}} a_{\vec{k}}(x)
P_{\vec{k}}(\mathbf{y})= \sum_{\vec{k}\in\mathbb{N}_0^{d}}
\overline{a}_{\vec{k}}(x) \overline{P}_{\vec{k}}(\mathbf{y}),
\end{equation}
where the convergence is understood in $L^{2}(\Omega_d, V,
\Pi_{i=1}^d \ud x_i)$, and, as in \eqref{def:tensorLeg}, we have
\begin{equation}
a_{\mathbf{k}}(x) = \frac{1}{h_{\vec{k}}^{(\frac{1}{2})}}
\int_{\Omega_d}  f(x,\mathbf{y}) P_{\mathbf{k}}(\mathbf{y})
\mathrm{d}\mathbf{y}, \qquad \overline{a}_{\mathbf{k}}(x) =
\frac{a_{\mathbf{k}}(x)}{\left(\vec{k}+\frac{1}{2}\right)^{\frac{1}{2}}}.
\end{equation}
By assuming that the dependence of the parameters $\mathbf{y}$ is
analytically smooth, we have the following estimates of the
coefficients $a_{\mathbf{k}}(x)$ and $\overline{a}_{\mathbf{k}}(x)$.
\begin{proposition}\label{prop:extLegcoeff}
Let $f(x,\mathbf{y})$ be a function defined in a bounded regular
domain $D \subset \mathbb{R}^n $ with the parameters $\mathbf{y} \in
\Omega_d$. Suppose that $f\in L^{\infty}(\Omega_d, V, \Pi_{i=1}^d
\ud x_i)$, where $V=V(D)$ is certain Banach space equipped with the
norm $||\cdot||_{V(D)}$, and the analytic continuation
$f(x,\mathbf{z})$ of $f(x,\mathbf{y})$ satisfies Assumption I, we
have the following estimates of the coefficients in
\eqref{eq:fxyexp}:
\begin{align}\label{eq:LegCoeffgen}
\| a_{\mathbf{k}} \|_{V(D)} &\leq \frac{ \sup_{\mathbf{z}\in
\mathcal{E}_{\pmb{\rho}}}\| f \|_{V(D)}
L(\mathcal{E}_{\pmb{\rho}})}{\pi^d \pmb{\rho}^{\vec{k}}}
\prod_{\substack{1\leq i \leq d
\\ k_i = 0}}\overline{D}_{\rho_i}^{(\frac{1}{2})}
\prod_{\substack{1\leq j \leq d \\ k_j \neq 0}} \sqrt{k_j}
D_{\rho_j}^{(\frac{1}{2})},
\end{align}
and
\begin{align}\label{eq:NormLegCoeffgen}
\|\overline{a}_{\mathbf{k}} \|_{V(D)} &\leq
\frac{2^{\frac{\aleph(\mathbf{k})}{2}} \sup_{\mathbf{z}\in
\mathcal{E}_{\pmb{\rho}}}\| f
\|_{V(D)}L(\mathcal{E}_{\pmb{\rho}})}{\pi^d \pmb{\rho}^{\vec{k}}}
\prod_{\substack{1\leq i \leq d
\\ k_i = 0}}\overline{D}_{\rho_i}^{(\frac{1}{2})}
\prod_{\substack{1\leq j \leq d \\ k_j \neq 0}}
D_{\rho_j}^{(\frac{1}{2})},
\end{align}
where the constants $\overline{D}_{\rho_i}^{(\frac{1}{2})}$,
$D_{\rho_j}^{(\frac{1}{2})}$ are defined in \eqref{eq:upperboundQ}
and \eqref{eq:D}, respectively.
\end{proposition}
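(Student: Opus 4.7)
The plan is to reproduce the argument of Theorem \ref{thm:akbound1} in the Banach-space-valued setting, which is essentially routine once we observe that Cauchy's integral formula extends to holomorphic functions with values in a Banach space. First I would fix an arbitrary point $x \in D$ and regard $\mathbf{z} \mapsto f(x, \mathbf{z})$ as a $V$-valued function that is holomorphic in the polyellipse $\mathcal{E}_{\pmb{\rho}}$. By the Bochner version of the multivariate Cauchy integral formula, one has
\begin{equation*}
f(x,\mathbf{y}) = \left(\frac{1}{2\pi i}\right)^d \oint_{\mathcal{E}_{\pmb{\rho}}} \frac{f(x,\mathbf{z})}{\mathbf{z}-\mathbf{y}} \, \mathrm{d}\mathbf{z}, \qquad \mathbf{y}\in \Omega_d,
\end{equation*}
with the integral understood in the Bochner sense in $V$. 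The identity makes sense because $f(x,\cdot)$ is continuous and $V$-valued on the compact set $\mathcal{E}_{\pmb{\rho}}$.

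Next, I would substitute this representation into the integral definition of $a_{\mathbf{k}}(x)$, interchange the order of integration by Fubini (justified since the integrand is Bochner integrable on the compact product domain), and recognize the inner integral as producing $\mathcal{Q}_{\mathbf{k}}^{(1/2)}(\mathbf{z})$ through the very same computation as in \eqref{eq:coutour}. This yields
\begin{equation*}
a_{\mathbf{k}}(x) = \left(\frac{1}{\pi i}\right)^d \oint_{\mathcal{E}_{\pmb{\rho}}} f(x,\mathbf{z}) \, \mathcal{Q}_{\mathbf{k}}^{(1/2)}(\mathbf{z}) \, \mathrm{d}\mathbf{z}.
\end{equation*}

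Then I would take the $V(D)$ norm of both sides. Using the standard inequality that the norm of a Bochner integral is bounded by the integral of the norms, I obtain
\begin{equation*}
\|a_{\mathbf{k}}\|_{V(D)} \leq \frac{1}{\pi^d}\oint_{\mathcal{E}_{\pmb{\rho}}} \|f(\cdot,\mathbf{z})\|_{V(D)} \, |\mathcal{Q}_{\mathbf{k}}^{(1/2)}(\mathbf{z})| \, |\mathrm{d}\mathbf{z}|,
\end{equation*}
which is bounded above by $\frac{L(\mathcal{E}_{\pmb{\rho}})}{\pi^d} \sup_{\mathbf{z}\in \mathcal{E}_{\pmb{\rho}}}\|f\|_{V(D)} \max_{\mathbf{z}\in\mathcal{E}_{\pmb{\rho}}}|\mathcal{Q}_{\mathbf{k}}^{(1/2)}(\mathbf{z})|$. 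Applying Proposition \ref{prop:upperbound} with $\lambda=1/2$ to the last factor gives \eqref{eq:LegCoeffgen}. The normalized estimate \eqref{eq:NormLegCoeffgen} then follows immediately from $\overline{a}_{\mathbf{k}}(x) = a_{\mathbf{k}}(x)/(\vec{k}+\tfrac{1}{2})^{1/2}$ together with the elementary observation that $\prod_{k_j \neq 0} \sqrt{k_j}/\sqrt{k_j+1/2} \leq 1$ and $\prod_{k_j = 0}(1/\sqrt{1/2}) = 2^{\aleph(\mathbf{k})/2}$, matching the prefactor in \eqref{eq:NormLegCoeff} in the scalar case.

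The only subtlety, and the main point requiring care, is the validity of the Cauchy integral formula and of Fubini's theorem for $V$-valued holomorphic functions; these are classical facts (holomorphy in the sense of weak or strong analyticity is equivalent for Banach-valued maps and Cauchy's formula persists), so no real obstacle arises. Apart from this, the proof is a verbatim copy of the argument leading to \eqref{eq:LegCoeff} and \eqref{eq:NormLegCoeff}, with scalar absolute values replaced by the norm $\|\cdot\|_{V(D)}$.
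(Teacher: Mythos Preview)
Your proposal is correct and follows essentially the same route as the paper: derive the contour-integral representation $a_{\mathbf{k}}(x) = (\pi i)^{-d}\oint_{\mathcal{E}_{\pmb{\rho}}} f(x,\mathbf{z})\,\mathcal{Q}_{\mathbf{k}}^{(1/2)}(\mathbf{z})\,\mathrm{d}\mathbf{z}$ via Cauchy's formula, take the $V(D)$-norm, and apply Proposition~\ref{prop:upperbound} with $\lambda=1/2$. The paper's proof is in fact just a brief sketch of exactly these steps, so your additional care with the Bochner integral and Fubini justifications only makes the argument more complete.
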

\begin{proof}Since the proof is similar to that of Theorem \ref{thm:akbound1}, we only sketch the proof of \eqref{eq:LegCoeffgen}. Thanks to the analytic dependence of $\mathbf{y}$, as in the derivation of \eqref{eq:coutour}, we obtain from Cauchy's integral formula that
$
a_{\vec{k}}(x) = \left( \frac{1}{\pi i} \right)^d
\oint_{\mathcal{E}_{\pmb{\rho}}} f(x,\mathbf{z})
\mathcal{Q}_{\mathbf{k}}^{(\frac12)}(\mathbf{z})
\mathrm{d}\mathbf{z}
$.
Hence, it is readily seen that
\begin{align}
\| a_{\mathbf{k}} \|_{V(D)}
\leq \frac{ \sup_{\mathbf{z}\in \mathcal{E}_{\pmb{\rho}}}\| f
\|_{V(D)} L(\mathcal{E}_{\pmb{\rho}})}{\pi^d} \prod_{i=1}^d
\max_{z_i \in \mathcal{E}_{\rho_i}} \left|
Q_{k_i}^{(\frac12)}(z_i)\right|.
\end{align}
This, together with Proposition \ref{prop:upperbound}, gives us
\eqref{eq:LegCoeffgen}.

This completes the proof of Proposition \ref{prop:extLegcoeff}.
\end{proof}

As an application of the above proposition, let us consider a family
of elliptic PDEs of the form
\begin{align}
\label{eq:PDE} \left\{
       \begin{array}{ll}
        -\nabla \cdot (a(x,\mathbf{y})\nabla u(x,\mathbf{y})) = f(x), &~~ \hbox{$\forall (x,\mathbf{y})\in D\times \Gamma$,}\\[2ex]
        u(x,\mathbf{y}) =0, &~~ \hbox{$\forall (x,\mathbf{y})\in \partial D\times \Gamma$,}
       \end{array}
        \right.
\end{align}
where $D \subset \mathbb{R}^n$ is a bounded Lipschitz domain with
$n\in\{1,2,3\}$, the diffusion coefficient $a(x,\mathbf{y})$ is a
function of $x$ and of parameters $\mathbf{y}=\{y_1,\ldots,y_d\} \in
\Gamma=\Omega_{d}$, and the function $f$ is a fixed function on $D$.
The gradient operator $\nabla$ is taken with respect to $x$. It is
assumed that $a$ and $f$ are chosen such that the system
\eqref{eq:PDE} is well-defined in the Sobolev space $V(D):=H_0^1(D)$
equipped with the energy norm $\|\cdot\|_{V(D)}:=\|\nabla(\cdot)
\|_{L^2(D)}$. Parameterized linear elliptic PDEs of this type arise
in a variety of stochastic and deterministic modeling of complex
systems; cf. \cite{KleiberBook,Milani2008}.

Since the solution of \eqref{eq:PDE} depends smoothly on the
coefficient $a$, a major method to find it is based on a polynomial
approximation, which leads to an approximation to the solution $u$
of the form
\begin{align}\label{eq:MultiAppr}
u_{\Lambda}(x,\mathbf{y}) = \sum_{\vec{k}\in \Lambda} c_{\vec{k}}(x)
\Psi_{\vec{k}}(\mathbf{y}),
\end{align}
where $\Lambda\in \mathbb{N}_0^{d}$ is a finite index set,
$\Psi_{\vec{k}}(\mathbf{y})$ is a multivariate polynomial, and
$c_{\vec{k}}(x) \in V(D)$ is the coefficient to be computed. Suppose
that $\|\Psi_{\vec{k}}(\mathbf{y})\|_{L^{\infty}(\Gamma)}=1$, the
error of the approximation \eqref{eq:MultiAppr} can be bounded by
\begin{align}\label{eq:truncerror}
\sup_{\mathbf{y}\in \Gamma}\bigg\|u(x,\mathbf{y}) -
u_{\Lambda}(x,\mathbf{y}) \bigg\|_{V(D)} \leq \sum_{\vec{k}\in
\Lambda^c} \|c_{\vec{k}}(x)\|_{V(D)},
\end{align}
where $\Lambda^c$ denotes the complement of $\Lambda$ in
$\mathbb{N}_0^{d}$.

In practice, the polynomials $\Psi_{\vec{k}}(\mathbf{y})$ are often
chosen to be the monomials or the tensorized Legendre polynomials
(cf. \cite{beck2014convergence,cohen2011analytic,tran2017analysis}),
which correspond to Taylor and Legendre approxiamtions,
respectively. For the latter case, both the Legendre and the
normalized Legendre expansions, i.e.,
\begin{align}\label{eq:tensorLeg}
u(x,\mathbf{y}) =  \sum_{\vec{k}\in\mathbb{N}_0^{d}} u_{\vec{k}}(x)
P_{\vec{k}}(\mathbf{y}) =  \sum_{\vec{k}\in\mathbb{N}_0^{d}}
v_{\vec{k}}(x) \overline{P}_{\vec{k}}(\mathbf{y}),
\end{align}
have been discussed. In view of the truncation error given in
\eqref{eq:truncerror}, an effective way of computation requires
using the multi-index set largest the norm of the coefficients among
all the multi-index sets with fixed cardinality. This is usually a
difficult task in implementation. Alternatively, one could relax the
condition by performing the so-called quasi-optimal approximation,
that is, the multi-index set is chosen so that the upper bounds of
the coefficients are maximized. A general strategy for convergence
analysis of quasi-optimal polynomial approximations for
parameterized PDEs \eqref{eq:PDE} was presented in
\cite{tran2017analysis}, and a key ingredient of the analysis
therein is the upper bounds of the Legendre coefficients
$u_{\vec{k}}(x)$ and $v_{\vec{k}}(x)$ given in \eqref{eq:tensorLeg}.
In what follows, we will provide sharper bounds of $u_{\vec{k}}(x)$
and $v_{\vec{k}}(x)$ with the aid of Proposition
\ref{prop:extLegcoeff}, which improve those used in
\cite{tran2017analysis}; see also
\cite{beck2014convergence,cohen2011analytic}.

Following the framework proposed in \cite{tran2017analysis}, we make
the following two assumptions on the diffusion coefficient $a$ in
\eqref{eq:PDE}.
\begin{itemize}
  \item There exist two positive constants $0<a_{\mathrm{min}}<
a_{\mathrm{max}}<\infty$ such that for all $x\in \overline{D}$ and
$\mathbf{y}\in \Gamma$,
\begin{equation}\label{eq:esta}
a_{\mathrm{min}}\leq a(x,\mathbf{y}) \leq a_{\mathrm{max}}.
\end{equation}
  \item The complex continuation $a(x,\mathbf{z})$ of $a(x,\mathbf{y})$ is a
$L^{\infty}(D)$-valued holomorphic function on $\mathbb{C}^{d}$.
\end{itemize}
\begin{proposition}\label{prop:LegBounds}
Assume that the coefficient $a(x,\mathbf{y})$ in  the parameterized
PDEs \eqref{eq:PDE} satisfies the above two assumptions. If we
further require that $\Re(a(x,\vec{z}))\geq \delta$ for some
$0<\delta<a_{\mathrm{min}}$, $x\in \overline{D}$ and $\vec{z}\in
\mathcal{E}_{\pmb{\rho}}$ with $\rho_j>1$ for each $j=1,\ldots,d$.
Then, the coefficients of tensorized Legendre expansions of $u$
given in  \eqref{eq:tensorLeg} admit the following estimates.
\begin{align}\label{eq:estextLeg}
\|u_{\vec{k}}(x)\|_{V(D)} \leq \widehat{C}_{\pmb{\vec{\rho}},\delta}
\pmb{\vec{\rho}}^{-\vec{k}} \prod_{k_j\neq0}\sqrt{k_j}, \quad
\|v_{\vec{k}}(x)\|_{V(D)} \leq \widehat{C}_{\pmb{\vec{\rho}},\delta}
2^{\frac{\aleph(\mathbf{k})}{2}} \pmb{\vec{\rho}}^{-\vec{k}},
\end{align}
where
\[
\widehat{C}_{\pmb{\vec{\rho}},\delta}=\frac{\|f\|_{V^{*}(D)}}{\delta}
\left(\prod_{j=1}^{d}
\frac{L(\mathcal{E}_{\rho_j})}{\pi}\right)\prod_{k_i=0}\overline{D}_{\rho_i}^{(\frac{1}{2})}
\prod_{k_j\neq0} D_{\rho_j}^{(\frac{1}{2})}
\]
with $V^{\ast}(D)$ being the dual of the space $V(D)$.
\end{proposition}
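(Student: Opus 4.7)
\medskip

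\noindent\textbf{Proof proposal.} The plan is to combine Proposition \ref{prop:extLegcoeff} (which controls tensorized Legendre coefficients of any $V(D)$-valued function with analytic dependence on $\mathbf{y}$ in the polyellipse $\mathcal{E}_{\pmb{\rho}}$) with a uniform energy estimate for the complex-extended solution $u(x,\mathbf{z})$. Thus, the proof splits naturally into two steps: (i) show that $u$ extends to a $V(D)$-valued holomorphic function on $\mathcal{E}_{\pmb{\rho}}$ with a uniform bound $\sup_{\mathbf{z}\in\mathcal{E}_{\pmb{\rho}}}\|u(x,\mathbf{z})\|_{V(D)}\leq \|f\|_{V^*(D)}/\delta$; (ii) apply Proposition \ref{prop:extLegcoeff} to $u$ and read off the conclusion.

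For step (i), I would fix $\mathbf{z}\in\mathcal{E}_{\pmb{\rho}}$ and consider the sesquilinear form
\begin{equation*}
B_{\mathbf{z}}(w,v) := \int_D a(x,\mathbf{z})\,\nabla w\cdot \overline{\nabla v}\,\ud x, \qquad w,v\in V(D).
\end{equation*}
The hypothesis $\Re\bigl(a(x,\mathbf{z})\bigr)\geq \delta$ for $x\in\overline{D}$ together with $a\in L^\infty(D)$ (from the bound $a\leq a_{\max}$ continued holomorphically) yields coercivity $\Re B_{\mathbf{z}}(w,w)\geq \delta\|w\|_{V(D)}^2$ and continuity. The complex Lax--Milgram theorem then produces a unique $u(\cdot,\mathbf{z})\in V(D)$ solving the variational form of \eqref{eq:PDE}. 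Testing with $v=u(\cdot,\mathbf{z})$ and taking real parts gives
\begin{equation*}
\delta\|u(\cdot,\mathbf{z})\|_{V(D)}^{2} \leq \Re B_{\mathbf{z}}(u,u) = \Re\langle f, u\rangle \leq \|f\|_{V^*(D)}\|u(\cdot,\mathbf{z})\|_{V(D)},
\end{equation*}
so that $\|u(\cdot,\mathbf{z})\|_{V(D)}\leq \|f\|_{V^*(D)}/\delta$, uniformly in $\mathbf{z}\in\mathcal{E}_{\pmb{\rho}}$. Holomorphicity of $\mathbf{z}\mapsto u(\cdot,\mathbf{z})$ as a $V(D)$-valued map follows from the holomorphic dependence of $a$ and a standard implicit-function / Neumann series argument applied to the perturbation of $B_{\mathbf{z}}$ in each variable $z_j$ (this is the essentially routine part that underlies \cite{cohen2011analytic,tran2017analysis}).

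For step (ii), having established that $u\in L^\infty(\Omega_d,V(D),\prod \ud y_i)$ with analytic continuation satisfying Assumption I on $\mathcal{E}_{\pmb{\rho}}$, Proposition \ref{prop:extLegcoeff} applies verbatim with $f$ replaced by $u$ and yields
\begin{equation*}
\sup_{\mathbf{z}\in\mathcal{E}_{\pmb{\rho}}}\|u(\cdot,\mathbf{z})\|_{V(D)}\leq \frac{\|f\|_{V^*(D)}}{\delta}
\end{equation*}
as the prefactor. Inserting this into \eqref{eq:LegCoeffgen} and \eqref{eq:NormLegCoeffgen} gives the stated bounds in \eqref{eq:estextLeg}, with the constant $\widehat{C}_{\pmb{\rho},\delta}$ exactly as defined.

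The main obstacle is conceptual rather than computational: one must be careful that the complex extension $a(x,\mathbf{z})$ satisfies both boundedness in $L^\infty(D)$ on $\mathcal{E}_{\pmb{\rho}}$ (so that $B_{\mathbf{z}}$ is continuous) and the positivity $\Re a\geq \delta$ (so that $B_{\mathbf{z}}$ is coercive), uniformly in $\mathbf{z}$. Once these are in place, all remaining steps are essentially mechanical invocations of Lax--Milgram and Proposition \ref{prop:extLegcoeff}; no additional estimate on the Cauchy transforms $\mathcal{Q}_{k_j}^{(1/2)}$ is needed since those have been absorbed into the constants $\overline{D}_{\rho_i}^{(1/2)}$ and $D_{\rho_j}^{(1/2)}$.
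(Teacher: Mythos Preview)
Your proposal is correct and follows essentially the same two-step structure as the paper's proof: establish analyticity of $\mathbf{z}\mapsto u(\cdot,\mathbf{z})$ on $\mathcal{E}_{\pmb{\rho}}$ together with the a~priori bound $\|u\|_{V(D)}\leq \|f\|_{V^*(D)}/\delta$, then invoke Proposition~\ref{prop:extLegcoeff}. The only difference is that the paper delegates step~(i) entirely to \cite[Theorem~1]{tran2017analysis}, whereas you sketch the underlying Lax--Milgram argument directly; since you yourself note this is the routine content behind that reference, the two proofs are substantively the same.
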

\begin{proof}
By \cite[Theorem 1]{tran2017analysis}, it follows that the
conditions satisfied by $a(x,\mathbf{y})$ ensure that $\mathbf{z}
\rightarrow u(x,\mathbf{z})$ is analytic in an open neighborhood of
$\mathcal{E}_{\pmb{\rho}}$ and this solution satisfies a priori
estimate
\begin{equation}\label{eq:priest}
||u||_{V(D)}\leq \frac{||f||_{V^\ast(D)}}{\delta}.
\end{equation}
As a consequence, the solution $u$ satisfies the conditions of
Proposition \ref{prop:extLegcoeff}, and the estimates
\eqref{eq:estextLeg} follow directly from \eqref{eq:LegCoeffgen},
\eqref{eq:NormLegCoeffgen} and \eqref{eq:priest}.
\end{proof}

\begin{remark}
The following estimates of $\|u_{\vec{k}}(x)\|_{V(D)}$ and
$\|v_{\vec{k}}(x)\|_{V(D)}$ are reported in \cite[Proposition
2]{tran2017analysis}:
\begin{align}\label{eq:estTran}
\|u_{\vec{k}}(x)\|_{V(D)} \leq C_{\pmb{\vec{\rho}},\delta}
\pmb{\vec{\rho}}^{-\vec{k}} \prod_{j=1}^{d}(2k_j+1), \quad
\|v_{\vec{k}}(x)\|_{V(D)} \leq C_{\pmb{\vec{\rho}},\delta}
\pmb{\vec{\rho}}^{-\vec{k}} \prod_{j=1}^{d}\sqrt{2k_j+1},
\end{align}
where $C_{\pmb{\vec{\rho}},\delta}=\frac{\|f\|_{V^{*}(D)}}{\delta}
\prod_{j=1}^{d} \frac{L(\mathcal{E}_{\rho_j})}{4(\rho_j-1)}$. where
$C$ is some constant independent of the multi-index $\vec{k}$. A
comparison of \eqref{eq:estTran} and \eqref{eq:estextLeg} shows our
results \eqref{eq:estextLeg} are sharper, especially when $k_j \to
+\infty$ for some $j\in\{1,\ldots, d\}$.
\end{remark}

\begin{remark}
Since the polyellipse $\mathcal{E}_{\pmb{\rho}}$ could be deformed continuously to the hypercube $\Omega_d$ as $\pmb{\rho} \to \pmb{1}$, it is then reasonable to expect the uniform ellipticity of the diffusion coefficient $a(x,\mathbf{y})$ given in \eqref{eq:esta} also holds for some polyellipses $\mathcal{E}_{\pmb{\rho}}$, at least for $\pmb{\rho}$ close to $\pmb{1}$, which in turn implies the analyticity of the solution $u$ with respect to the parameter $\pmb{y}$ in a polyellipse. This explains the advance of using Legendre expansion (hence also for Gegenbauer expansion) over other kind of expansions (for example the Taylor expansion) in numerical studies of \eqref{eq:PDE}, as observed in \cite{tran2017analysis}.
\end{remark}

\section{Concluding remarks}\label{sec:conclusion}
In this paper, we have derived some new and sharper bounds for the
coefficients of multivariate Gegenbauer expansion of analytic functions based on two
different extensions of the Bernstein ellipse. These bounds
allow us to establish an explicit error bound for the multivariate
Gegenbauer approximation associated with an $\ell^q$ ball index set
in the uniform norm. For isotropic functions, the predicted rates of
convergence agree well with the empirical rates observed in the
numerical experiments. Moreover, our analysis suggests that the
multivariate Gegenbauer approximation based on the index set
$\Lambda_N^2$ is an optimal choice among that of the $\ell^q$ ball
index set, provided that the convergence rate established in Theorem
\ref{thm:BallSet} is sharp. Corresponding results for functions with finite regularity are also presented by restricting the discussion on a class of functions $\mathcal{H}^{\vec{m}}(\Omega_d)$
and on the full grid. As an application, we improve the estimates of the coefficients of tensorized
Legendre expansion arising from polynomial approximation for a
family of parameterized PDEs.

\section*{Acknowledgements}
We thank two anonymous referees for their careful reading and
constructive suggestions. Haiyong Wang also thanks the hospitality
of the School of Mathematical Sciences at Fudan university where the
present research was initiated. His work was supported by National
Natural Science Foundation of China under grant number 11671160. Lun
Zhang was partially supported by National Natural Science Foundation
of China under grant number 11822104, by The Program for Professor
of Special Appointment (Eastern Scholar) at Shanghai Institutions of
Higher Learning, and by Grant EZH1411513 from Fudan University.


\end{document}